\numberwithin{equation}{section}
\newcommand{\C}{\mathscr{C}}
\newcommand{\E}{\mathscr{E}}
\newcommand{\G}{\mathscr{G}}
\newcommand{\K}{\mathscr{K}}
\renewcommand{\L}{\mathscr{L}}
\newcommand{\LL}{\mathcal{L}}
\newcommand{\N}{\mathbb{N}}
\renewcommand{\P}{\mathscr{P}}
\newcommand{\Q}{\mathbb{Q}}
\newcommand{\R}{\mathbb{R}}
\renewcommand{\S}{\mathcal{S}}
\newcommand{\X}{\mathcal{X}}
\newcommand{\Z}{\mathbb{Z}}
\newcommand{\loc}{{\rm loc}}
\newcommand{\dist}{{\mbox{\normalfont dist}}}
\newcommand{\bequ}{\begin{equation}}
\newcommand{\nequ}{\end{equation}}
\newcommand{\PV}{\mbox{\normalfont P.V.}}
\newcommand{\Haus}{\mathcal{H}}
\newcommand{\QQ}{\mathcal{Q}}
\newcommand{\RR}{\mathcal{R}}
\newcommand{\YY}{\mathcal{Y}}
\newcommand{\GG}{\mathcal{G}}
\newcommand{\PP}{\mathcal{P}}
\newcommand{\tRn}{{\widetilde{\R}^n}}
\renewcommand{\u}{{u_\omega^M}}
\newcommand{\F}{{\mathscr{F}_\omega}}
\newcommand{\A}{{\mathcal{A}_\omega^M}}
\newcommand{\M}{{\mathcal{M}_\omega^M}}
\DeclareMathOperator{\card}{card}
\DeclareMathOperator{\Per}{Per}
\theoremstyle{plain}
\newtheorem{definition}{Definition}[section]
\newtheorem{theorem}[definition]{Theorem}
\newtheorem{proposition}[definition]{Proposition}
\newtheorem{lemma}[definition]{Lemma}
\newtheorem{corollary}[definition]{Corollary}
\theoremstyle{definition}
\renewcommand{\le}{\leqslant}
\renewcommand{\leq}{\leqslant}
\renewcommand{\ge}{\geqslant}
\renewcommand{\geq}{\geqslant}
\begin{document}

\title[Planelike minimizers of nonlocal energies and fractional perimeters]{Planelike minimizers of nonlocal Ginzburg-Landau energies and fractional perimeters in periodic media}

\author[Matteo Cozzi, Enrico Valdinoci]{
Matteo Cozzi${}^{(1,2)}$
\and
Enrico Valdinoci${}^{(3,4)}$
}

\subjclass[2010]{35R11, 82B26}

\keywords{Nonlocal Ginzburg-Landau-Allen-Cahn equation,
periodic media, density and energy estimates, planelike minimizers}

\thanks{The first author is supported by the MINECO grants~MDM-2014-0445 and~MTM2017-84214-C2-1-P. The second author is supported by the Australian Research Council grant~N.E.W. ``Nonlocal Equations at Work''.}

\maketitle

{\scriptsize \begin{center} (1) -- BGSMath Barcelona Graduate School of Mathematics.
\end{center}
\scriptsize \begin{center}(2) -- Departament de Matem\`atiques\\
Universitat Polit\`ecnica de Catalunya\\
Diagonal 647, E-08028 Barcelona (Spain).
\end{center}
\scriptsize \begin{center} (3) --
School of Mathematics and Statistics\\
University of Melbourne\\
Grattan Street,
Parkville, VIC-3010 Melbourne
(Australia).
\end{center}
\scriptsize \begin{center} (4) -- Dipartimento di Matematica
``Federigo Enriques''\\
Universit\`a
degli Studi di Milano\\
Via Saldini 50, I-20133 Milano (Italy).
\end{center}
\bigskip

\begin{center}
E-mail addresses: matteo.cozzi@upc.edu,
enrico@math.utexas.edu
\end{center}
}

\begin{abstract}
We consider here a nonlocal phase transition energy in a periodic medium
and we construct solutions whose interfaces lie at a bounded distance
from any given hyperplane.

These solutions are either periodic or quasiperiodic, depending
on the rational dependency of the normal direction to the reference
hyperplane.

Remarkably, the oscillations of the interfaces with respect to
the reference
hyperplane are bounded by a universal constant times the periodicity
scale of the medium.

This geometric property allows us to
establish, in the limit,
the existence of planelike nonlocal
minimal surfaces in a periodic structure.

The proofs rely on new optimal density and energy estimates.
In particular, roughly speaking, the energy of phase transition
minimizers is controlled, both from above and below, by the energy
of one-dimensional transition layers.
\end{abstract}

\tableofcontents

\section{Introduction}\label{0orhteriteru6877}

In this paper, we consider a phase transition model in a periodic
medium with long-range particle interactions.
As customary, the phase coexistence is mathematically described
by a double-well potential, the minimization of which tends to
set a suitable state-parameter function into 
one of the two pure phases (which will be taken here to be~$-1$ and~$+1$).

In order to make the coexistence of phases significant
from both the mathematical and the
physical point of view, the total energy
of the system has to take into account also an elastic, or
ferromagnetic, energy, which avoids the production of
unnecessary phase changes
and forces the interface between phases to be minimal, at least
at large scales, with respect to a suitable notion of surface tension.

The model that we study here considers an elastic
energy of nonlocal type, that takes into account
long-range particle interactions with polynomial decay. {F}rom
the mathematical point of view, this elastic energy
takes the form of a suitable seminorm of Gagliardo
type which is related to fractional Sobolev spaces, see e.g.~\cite{SV12, SV14}.
For this, the nonlocal character of the energy is encoded
into a fractional parameter~$s\in(0,1)$, and the smaller this
parameter is, the stronger the nonlocal effect on the system.

This type of models also finds natural applications in the description
of boundary layer
effects on phase transitions, see~\cite{ABS94, AB98, G09, SiV12},
and in classical equations subject to a nonlinear boundary reaction,
see~\cite{CS-M05, SiV09, CC10, CC14, CS15}.
Also, for recent books on lonlocal problems see e.g.~\cite{MBRS16, BV17, DMV17, G17}.
\medskip

At a large scale, the phase separation tends to
minimize a suitable notion of surface tension, related to
either local or nonlocal perimeters, see~\cite{SV12}.
In this, the fractional parameter~$s=1/2$ provides
a threshold between local and nonlocal behaviors of interfaces
at a large scale: indeed, when~$s\in[1/2,\,1)$ these interfaces
are related to the minimization of a classical perimeter functional,
and the nonlocal effects are not involved in this process;
conversely, when~$s\in(0,\,1/2)$ these interfaces
are related to the minimization of the fractional perimeter functional
introduced in~\cite{CRS10} and thus
the nonlocal effects persist at any scale.
\medskip

The particular focus of this paper is on periodic media,
and for this we suppose that both the potential and the elastic
energies depend periodically on the space variable (of course,
a natural interpretation of such model comes from the study of crystals).
The periodicity scale of the medium is given by a parameter~$\tau>0$.
In crystals, one may think that~$\tau$ is small: for this, it is
natural to seek results which possess good scaling properties
with respect to~$\tau$.
\medskip

The main result of this paper is indeed the construction
of phase transition solutions of minimal type whose
interfaces lie at a bounded distance from any prescribed hyperplanes.
Roughly speaking, these interfaces separate
the pure states in an ``almost flat'' way.
We stress that the existence of these objects is not obvious, since
the medium is not homogeneous, and the flatness
property of the interfaces
is global in the whole of the space.\medskip

Moreover, and most importantly, such flatness property will be shown
to have optimal scaling features with respect to the periodicity size
of the medium. Namely, the distance from the prescribed
hyperplanes will be bounded by a structural constant times~$\tau$.
That is, in the motivation coming from crystallography,
the oscillation of these interfaces will be proven to be
comparable with the size of the crystal itself.\medskip

This invariance by scaling will allow us also to
look at a rescaled version of this picture and, by developing
an appropriate $\Gamma$-convergence theory in this framework,
we will also obtain a result on fractional minimal surfaces
in periodic media. Indeed, we will establish the
existence of nonlocal minimal surfaces in a periodic setting
which stay at a bounded
distance from any prescribed hyperplane (the same
result for classical minimal surfaces was obtained in~\cite{CdlL01}).\medskip

A crucial step in the proof of our results lies
in obtaining
density and energy estimates that are sharp with respect to
the size of the fundamental domain and that possess optimal scaling properties.
As a matter of fact, the energy of the minimizers
is not expected to behave in an additive way with respect to the domain
(roughly, the energy in a double ball is not the sum of the energy
of two balls). This is due to the fact that
the minimizers have the tendency to concentrate their energy
along a codimension-one interface. In addition, the nonlocal
features of the elastic energy contribute significantly to
the total
energy and this type of contribution changes dramatically
in dependence on the fractional parameter~$s$: once again,
the analysis for the cases~$s\in(0,\,1/2)$, $s=1/2$ and~$s\in
(1/2,\, 1)$ require different methods and give different results.

As a matter of fact, we will see that the energy
of the minimizers in a ball of radius~$R$ is controlled from above
and below by~$R^{n-\min\{1,2s\}}$ for~$s\in(0,1)\setminus\{1/2\}$,
and a logarithmic correction is needed for the case~$s=1/2$.\medskip

The precise mathematical setting in which we work is the following.
For a domain~$\Omega \subseteq \R$, we define
\begin{equation} \label{Edef}
\E(u; \Omega) := \frac{1}{2} \iint_{\C_{\Omega}} \left| u(x) - u(y) \right|^2 K(x, y) \, dx dy + \int_\Omega W(x, u(x)) \, dx,
\end{equation}
where
\begin{equation} \label{COmegadef}
\C_\Omega := \Big( \R^n \times \R^n \Big) \setminus \Big( \left( \R^n \setminus \Omega \right) \times \left( \R^n \setminus \Omega \right) \Big).
\end{equation}

The kernel~$K: \R^n \times \R^n \to [0, +\infty]$ is a measurable function satisfying
\begin{equation} \label{Ksymmetry} \tag{K1}
K(x, y) = K(y, x) \quad \mbox{for a.a.~} x, y \in \R^n,
\end{equation}
and
\begin{equation} \label{Kbounds} \tag{K2}
\frac{\lambda \chi_{(0, \xi)}(|x - y|)}{|x - y|^{n + 2 s}} \le K(x, y) \le \frac{\Lambda}{|x - y|^{n + 2 s}} \quad \mbox{for a.a.~} x, y \in \R^n,
\end{equation}
for some~$s \in (0, 1)$,~$\Lambda \ge \lambda > 0$ and~$\xi > 0$. Assumption~\eqref{Kbounds} ensures that~$K$ is controlled from above and below (in a neighborhood of the diagonal of~$\R^{2 n}$) by the standard homogeneous, translation invariant, rotationally symmetric kernel
\begin{equation} \label{FLker}
K_s(x, y) := \frac{1}{|x - y|^{n + 2 s}}.
\end{equation}
When~$s \in [1/2, 1)$, we also impose the regularity assumption
\begin{equation} \label{Kreg} \tag{K3}
\left| K(x, x + w) - K(x, x - w) \right| \le \Gamma |w|^{- n - 1 + \nu} \quad \mbox{for a.a.~} x, w \in \R^n,
\end{equation}
for some~$\nu \in (0, 1)$ and~$\Gamma > 0$.

On the other hand, the potential~$W: \R^n \times \R \to [0, +\infty)$ is a measurable function for which
\begin{equation} \label{Wzeros} \tag{W1}
W(x, \pm 1) = 0 \quad \mbox{for a.a.~} x \in \R^n,
\end{equation}
and, for any~$\theta \in [0, 1)$,
\begin{equation} \label{Wgamma} \tag{W2}
\inf_{\substack{x \in \R^n \\ |r| \le \theta}} W(x, r) \ge \gamma(\theta),
\end{equation}
where~$\gamma$ is a non-increasing positive function of the interval~$[0, 1)$. Moreover, we require~$W$ to be locally of class~$C^1$ in the second variable, to satisfy
\begin{equation} \label{Wbound} \tag{W3}
W(x, r), \, |W_r(x, r)| \le \kappa^{-1} \quad \mbox{for a.a.~} x \in \R^n \mbox{ and any } r \in \R,
\end{equation}
and
\begin{equation} \label{W''} \tag{W4}
\begin{aligned}
W(x, t) \ge W(x, r) + \kappa (1 + r) (t - r) + \kappa (t - r)^2 & \quad \mbox{for any } -1 \le r \le t \le - 1 + \kappa, \\
W(x, r) \ge W(x, t) + \kappa (1 - t) (t - r) + \kappa (t - r)^2 & \quad \mbox{for any } 1 - \kappa \le r \le t \le 1, \\
W(x, r) \le \kappa^{-1} (1 - |r|) & \quad \mbox{for any } 1 - \kappa \le |r| \le 1,
\end{aligned}
\end{equation}
for a.a.~$x \in \R^n$ and some small constant~$\kappa \in (0, 1)$. Condition~\eqref{W''} essentially tells that~$W$ must have superquadratic/sublinear detachment from its zeroes~$\pm 1$, uniformly in~$x$. We point out that any potential which is locally~$C^2$ in the second variable and satisfies
$$
W_r(x, \pm 1) = 0 \mbox{ and } W_{rr}(x, \pm 1) \ge \kappa \quad \mbox{for a.a.~} x \in \R^n,
$$
also fulfills~\eqref{W''}. But much more general behaviors are allowed.

In order to model a periodic environment, we need to impose periodicity conditions on the kernel~$K$ and the potential~$W$. Given~$\tau > 0$, we assume
\begin{equation} \label{Kper} \tag{K4}
K(x + k, y + k) = K(x, y) \quad \mbox{for a.a.~} x, y \in \R^n \mbox{ and any } k \in \tau \Z^n.
\end{equation}
\begin{equation} \label{Wper} \tag{W5}
W(x + k, r) = W(x, r) \quad \mbox{for a.a.~} x \in \R^n \mbox{ and any } k \in \tau \Z^n,
\end{equation}
for any fixed~$r \in \R$. We notice that the parameter~$\tau$ allows us to modulate the periodicity scale of the medium.
\medskip

Typical examples of potentials $W(x,r)$ that we take
into account are given by
\begin{eqnarray*}
&& Q(x)\, (1-r^2)^2,\\
&& Q(x)\, |1-r^2|^d \quad{\mbox{ with }} d\in (1,2),\\
&& Q(x)\, \left(1+\cos(\pi r)\right)\\
{\mbox{and }}&&
Q(x)\,\cos^2\left(\frac{\pi r}{2}\right),
\end{eqnarray*}
with~$Q(x)\in[1,2]$ for any~$x\in\R^n$.

Typical examples of interaction kernels are given by
$$
K(x, y) = \frac{a(x, y)}{|x - y|^{n + 2 s}},
$$
with~$a$ Lipschitz continuous and periodic in both~$x$ and~$y$.\medskip

Sometimes we will adopt shorthand notations for the interaction and potential terms appearing in definition~\eqref{Edef}. Given any measurable sets~$E, F \subseteq \R^n$, we write
\begin{equation} \label{kindef}
\begin{aligned}
\K_K(u; E, F) & = \K(u; E, F) := \frac{1}{2} \int_E \int_F \left| u(x) - u(y) \right|^2 K(x, y) \, dx dy \\
\K_K(u; E) & = \K(u; E) := \frac{1}{2} \iint_{\C_E} \left| u(x) - u(y) \right|^2 K(x, y) \, dx dy,
\end{aligned}
\end{equation}
with~$\C_E$ as in~\eqref{COmegadef}, and
$$
\P_W(u; E) = \P(u; E) := \int_E W(x, u(x)) \, dx.
$$
Under these conventions (and the symmetry assumption~\eqref{Ksymmetry}), we have
\begin{align*}
\E(u; \Omega) & = \K(u; \Omega, \Omega) + 2 \K(u; \Omega, \R^n \setminus \Omega) + \P(u; \Omega) \\
& = \K(u; \Omega) + \P(u; \Omega).
\end{align*}

\medskip

Often, we will consider the integral operator~$\L_K$ associated with~$\E$, that is defined by
\begin{equation} \label{LKdef}
\begin{aligned}
\L_K u(x) := & \, \PV \int_{\R^n} \left( u(x) - u(y) \right) K(x, y) \, dy \\
= & \, \lim_{\varepsilon \rightarrow 0^+} \int_{\R^n \setminus B_\varepsilon(x)} \left( u(x) - u(y) \right) K(x, y) \, dy.
\end{aligned}
\end{equation}
Such operator naturally appears when considering the Euler-Lagrange equation of the functional~$\E$. We observe that~$\L_K u$ is well-defined pointwise, at least when~$u$ is a smooth bounded function and~$K$ satisfies~\eqref{Kreg}, when~$s \ge 1/2$. See Lemmata~\ref{LKlems<} and~\ref{LKlemsge} for estimates in this direction. We also notice that~$\L_K$ boils down to the well-known fractional Laplacian~$(-\Delta)^s$, when~$K$ is the standard kernel~$K_s$ as defined in~\eqref{FLker}.

\medskip

After these preliminary definitions, we are now almost ready to state the main results contained in this paper. In order to do this, we first need to make precise the notions of minimizers of the functional~$\E$ that we take into consideration.

\begin{definition} \label{mindef}
Let~$\Omega$ be an open subset of~$\R^n$. A measurable function~$u: \R^n \to \R$ is said to be a~\emph{minimizer} of~$\E$ in~$\Omega$ if~$\E(u; \Omega) < +\infty$ and
$$
\E(u; \Omega) \le \E(v; \Omega),
$$
for any measurable function~$v$ that coincides with~$u$ outside of~$\Omega$.
\end{definition}

This definition may be extended to the whole space in the following way.

\begin{definition} \label{classAmindef}
A measurable function~$u: \R^n \to \R$ is said to be a~\emph{class~A minimizer} of~$\E$ if it is a minimizer for~$\E$ in every bounded open set~$\Omega \subset \R^n$.
\end{definition}

We stress that the simpler notion of~\emph{global minimizer} is too restrictive for our scopes. Indeed, the functions that we typically take into consideration have infinite energy over the whole space~$\R^n$ and therefore it is convenient to evaluate their energy on bounded domains only. However, class~A minimizers of~$\E$ are still (weak) solutions of the Euler-Lagrange equation
$$
- 2 \, \L_K u = W_u(\cdot, u) \quad \mbox{in } \R^n.
$$

The concept of class~A minimizers is frequently used in the literature, in contexts where one has to deal with objects having only locally finite energy. The terminology comes from~\cite{M24} and has been more recently adopted in e.g.~\cite{CdlL01,V04,PV05,CV17,CP16}.

\medskip

In what follows, we will construct class~A minimizers of~$\E$ and related functionals, which exhibit a close-to-one-dimensional geometry.

More specifically, we will look for minimizers that connect the two pure phases~$-1$ and~$1$ of the potential~$W$ asymptotically in one fixed direction~$\omega \in \R^n \setminus \{ 0 \}$ of the space, and that are~\emph{planelike}, in the sense that their intermediate level sets (of levels between, say,~$-9/10$ and~$9/10$) are contained in a strip orthogonal to~$\omega$ and of width universally proportional to the periodicity scale~$\tau$ of the medium. Note that 
when~$s<1/2$
we shall call \emph{universal} any
quantity that depends at most on~$n$,~$s$,~$\lambda$,~$\Lambda$,~$
\kappa$, the function~$\gamma$, but not on~$\xi$ and~$\tau$.
A similar notation is taken when~$s\ge 1/2$, but in this case
universal quantities may also depend on~$\nu$ and~$\Gamma$, 
according to condition~\eqref{Kreg}.
Namely, when~$s\ge1/2$,
we shall call \emph{universal} any
quantity that depends at most on~$n$,~$s$,~$\lambda$,~$\Lambda$,~$
\kappa$, the function~$\gamma$, 
$\nu$, $\Gamma$
but not on~$\xi$ and~$\tau$.\medskip

Our construction will heavily rely on the periodic structure of the ambient space and will be carried out in different ways, depending on whether the direction~$\omega$ belongs to~$\tau \Q^n$ or not. In the first, \emph{rational} case, the minimizers will naturally inherit a periodic property from that of the medium, in a sense that may be made precise through the following definition.

\begin{definition} \label{simperdef}
Let~$\omega \in \tau \Q^n \setminus \{ 0 \}$. We define the equivalence relation~$\, \sim_{\tau, \, \omega}$ in~$\R^n$, by setting
$$
x \sim_{\tau, \, \omega} y \quad \mbox{if and only if} \quad x - y = k \in \tau \Z^n \mbox{ and } \omega \cdot k = 0.
$$
We say that a function~$u: \R^n \to \R$ is~\emph{periodic with respect to~$\sim_{\tau, \, \omega}$} or simply~\emph{$\sim_{\tau, \, \omega}$-periodic} if
$$
u(x) = u(y) \quad \mbox{for any } x, y \in \R^n \mbox{ such that } x \sim_{\tau, \, \omega} y.
$$
\end{definition}

When no confusion may arise, we will denote this equivalence relation simply with~$\sim$.

\medskip

We are now in position to present the statements of the main contributions of this paper.
Our first result (Theorem~\ref{tauPLthm})
improves the main theorem of~\cite{CV17}
and it allows its application to the
scaled energies that provides $\Gamma$-limit results as a byproduct
(see Theorems~\ref{epsPLthm} and~\ref{PerPLthm}).
In this sense, our main results here consist in the forthcoming
Theorem~\ref{tauPLthm} and in the sequence of arguments
leading from it to 
Theorems~\ref{epsPLthm} and~\ref{PerPLthm}.

We point out that
the result in Theorem~\ref{tauPLthm} is valid for the whole fractional parameter
range~$s\in(0,1)$, while the $\Gamma$-limit results focus on the
strongly nonlocal regime~$s\in(0,1/2)$, in which the nonlocal features
of the problem are preserved at any scale and produce, in the limit,
a nonlocal perimeter functional (we think that it will be also interesting
to investigate the $\Gamma$-limit in the weakly nonlocal regime~$s\in[1/2,1)$,
and in this framework a limit functional of local type has to be expected).

\begin{theorem} \label{tauPLthm}
Let~$n \ge 2$ and~$s \in (0, 1)$. Assume that the kernel~$K$ and the potential~$W$ respectively satisfy~\eqref{Ksymmetry},~\eqref{Kbounds},~\eqref{Kper} and~\eqref{Wzeros},~\eqref{Wgamma},~\eqref{Wbound},~\eqref{W''},~\eqref{Wper}, with~$\xi = \tau \ge 1$. If~$s \in [1/2, 1)$, we also require~$K$ to fulfill~\eqref{Kreg}.\\
For any fixed~$\theta \in (0, 1)$, there exists a constant~$M_0 > 0$, depending only on~$\theta$ and on universal quantities, such that, given any direction~$\omega \in \R^n \setminus \{ 0 \}$, we can construct a class~A minimizer~$u$ of the energy~$\E$ for which
\begin{equation} \label{tauPLcond}
\bigg\{ x \in \R^n : \left| u(x) \right| < \theta \bigg\} \subset \bigg\{ x \in \R^n : \frac{\omega}{|\omega|} \cdot x \in \left[ 0, \tau M_0 \right] \bigg\}.
\end{equation}
Furthermore,
\begin{enumerate}[$\bullet$]
\item if~$\omega \in \tau \Q^n \setminus \{ 0 \}$, then~$u$ is periodic with respect to~$\sim_{\tau, \, \omega}$, while
\item if~$\omega \in \R^n \setminus \tau \Q^n$, then~$u$ is the locally uniform limit of a sequence of periodic class~A minimizers of~$\E$.
\end{enumerate}
\end{theorem}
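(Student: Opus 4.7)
The strategy follows the classical Caffarelli--de la Llave approach for planelike minimizers, adapted to the nonlocal Ginzburg--Landau setting. I would split the proof into two cases, according to whether $\omega$ is rational or irrational with respect to the period lattice, and in both cases the heart of the matter is a pair of sharp density and energy estimates that make the slab thickness $M_0$ truly universal.

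\textbf{Rational case.} Fix $\omega \in \tau\Q^n \setminus \{0\}$ and a large parameter $M>0$. I would work in the admissible class $\A$ of measurable functions $u:\R^n \to [-1,1]$ that are $\sim_{\tau,\omega}$-periodic and satisfy the soft constraints $u \leq -\theta$ on $\{x:\omega\cdot x/|\omega|\leq 0\}$ and $u \geq \theta$ on $\{x:\omega\cdot x/|\omega|\geq \tau M\}$. Because of periodicity and hypotheses~\eqref{Kbounds},~\eqref{Wbound},~\eqref{Wgamma}, the energy~$\E$ descends to a well-defined, coercive, weakly lower semicontinuous functional on a fundamental slab of thickness $\tau M$ (tail contributions over $\R^n$ being handled through the periodic summation); the direct method then produces a minimizer $\u \in \A$.

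\textbf{Planelike bound and removal of constraints.} The crux is to show that there exists $M_0$, depending only on $\theta$ and on universal quantities, such that for every $M \geq M_0$ the constraints defining $\A$ are inactive and in fact $\{|\u|<\theta\}\subset\{0<\omega\cdot x/|\omega|<\tau M_0\}$. This will follow from two sharp estimates: an upper energy bound $\E(\u;B_R) \lesssim R^{n-\min\{1,2s\}}$ (with a logarithmic correction at $s=1/2$), obtained by testing against an explicit one-dimensional transition layer competitor adapted to the periodic cell; and a matching uniform density estimate saying that, whenever $\u(x_0)$ lies in the intermediate regime, the intermediate set occupies a definite fraction of any ball $B_R(x_0)$ with $R$ comparable to $\tau$. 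Put together, these force the transition region to have thickness bounded by a universal multiple of $\tau$, so that once $M$ exceeds $M_0$ the constraint is never touched. A Birkhoff-type monotonicity argument, comparing $\u$ against all translates $\u(\cdot+k)$ with $k\in\tau\Z^n$ and using the minimality of $\u$ in $\A$, then upgrades $\u$ from a constrained minimizer to a genuine class~A minimizer of $\E$ on all of $\R^n$, giving~\eqref{tauPLcond} in the rational case together with the $\sim_{\tau,\omega}$-periodicity.

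\textbf{Irrational case via approximation.} For $\omega \in \R^n \setminus \tau\Q^n$, I would choose $\omega_j\in\tau\Q^n\setminus\{0\}$ with $\omega_j/|\omega_j|\to\omega/|\omega|$ and apply the rational case to obtain class~A minimizers $u_j$ that satisfy~\eqref{tauPLcond} with the same universal constant $M_0$. After translating each $u_j$ by a suitable element of $\tau\Z^n$ so that its transition slab meets the origin, the uniform $L^\infty$ bound together with the interior regularity of weak solutions of $-2\L_K u=W_u(\cdot,u)$ (using the hypotheses~\eqref{Kbounds} and~\eqref{Kreg} for $s\geq 1/2$) yields local equicontinuity. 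A locally uniformly convergent subsequence has as limit a function $u$ which, by a standard tail cut-and-paste argument that controls the nonlocal interactions outside large balls, is again a class~A minimizer of $\E$. Since the strip thickness $\tau M_0$ is independent of $j$, the property~\eqref{tauPLcond} survives the limit for the direction $\omega$.

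\textbf{Main obstacle.} The hardest step is establishing the universality of $M_0$. The nonlocality of $\K$ couples the slab to its entire exterior, preventing naive gluing of competitors, and the genuine scaling of the energy with the radius changes across the threshold $s=1/2$, forcing separate constructions in the strongly nonlocal ($s<1/2$), critical ($s=1/2$), and weakly nonlocal ($s>1/2$) regimes. The key hypothesis $\xi=\tau\geq 1$ in~\eqref{Kbounds} aligns the effective range of the kernel with the periodicity scale, and it is this alignment that ultimately allows the density and energy estimates to produce an $M_0$ independent of $\tau$, $\omega$, and the kernel beyond universal constants.
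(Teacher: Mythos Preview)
Your overall architecture matches the paper's: constrained minimization in the rational case with a periodic ansatz, showing the constraints become inactive once $M\ge M_0$, upgrading to a class~A minimizer via Birkhoff-type monotonicity, and finally approximation for irrational $\omega$. However, there is a genuine gap in the mechanism you propose for making $M_0$ universal.

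You invoke the upper energy bound together with ``a matching uniform density estimate saying that the intermediate set occupies a definite fraction of any ball $B_R(x_0)$ with $R$ comparable to $\tau$'' and assert that these force the transition region to have bounded thickness. This is not enough, and the density statement is not quite right: Theorem~\ref{densestthm} gives $|\{u>\theta\}\cap B_R|\ge \bar c R^n$ for the \emph{level sets}, while the \emph{intermediate} set only satisfies $|\{|u|<\theta\}\cap B_R|\ge \tilde c R^{n-1}$ (Proposition~\ref{intdensprop}). More to the point, pairing the upper bound $\E(u;B_R)\lesssim R^{n-1}\Psi_s(R)$ with either of these yields only $|\{|u|<\theta\}\cap B_R|\lesssim R^{n-1}\Psi_s(R)$, which for $s<1/2$ grows like $R^{n-2s}$ and gives no control on the slab width. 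The paper's key additional ingredient is a matching \emph{lower} energy bound (Theorem~\ref{enestbelowthm}/Proposition~\ref{enestbelowprop}): if $|u(x_0)|\le\theta_0$ then $\E(u;B_R(x_0))\ge c_0 R^{n-1}\Psi_s(R)$. This is new and delicate; its proof splits into the three regimes $s\lessgtr 1/2$, with the critical case requiring a multiscale summation of kinetic contributions to produce the logarithm. Combining upper and lower energy bounds with the density estimates, the paper proves a \emph{clean ball condition} (Proposition~\ref{cleanballprop}): inside any large ball centered at an intermediate point there is a sub-ball of radius $\kappa R$ entirely contained in $\{u>\theta\}$ (and another in $\{u<-\theta\}$). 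Choosing $R$ so that $\kappa R=\sqrt{n}\,\tau$ produces a ball of radius $\sim\tau$ on which $|\u|>\theta$; only then does the Birkhoff property propagate this into a half-space, giving Proposition~\ref{disttauprop} and hence the universal $M_0$.

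A second, smaller point: the Birkhoff and doubling properties you rely on are not available for an arbitrary constrained minimizer. The paper singles out the \emph{minimal minimizer} $\u:=\inf_{u\in\M}u$ (Proposition~\ref{uMexprop}), and it is precisely this choice that enjoys both properties and that allows one to rule out the ``wrong'' half-space in the propagation step of Proposition~\ref{disttauprop}. Your sketch should make this selection explicit.
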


Theorem~\ref{tauPLthm} has been proved in~\cite{CV17} for the case in which the periodicity scale~$\tau$ is equal to~$1$. Its proof in the more general setting of this paper requires several important modifications from that of~\cite{CV17}. We shall comment more on the differences in the argument at the end of this section.

In local contexts, similar results have been obtained in~\cite{V04} and~\cite{CdlL01}, where the authors respectively took into account an energy with gradient interaction term and a geometric functional driven by a heterogeneous perimeter, instead the one appearing in~\eqref{Edef}. See also~\cite{PV05,PV05b,NV07,BV08,D13} for related constructions.

In nonlocal frameworks, Theorem~\ref{tauPLthm} here and~\cite[Theorem~1.4]{CV17} are the first available results on planelike minimizers, to the best of our knowledge. When the medium is homogeneous (i.e.~$K$ is translation invariant and~$W$ does not depend on~$x$), the existence of one-dimensional minimizers has been investigated in~\cite{CS-M05,PSV13,CS14,CS15,CP16}.

When comparing Theorem~\ref{tauPLthm} to~\cite[Theorem~1.4]{CV17}, it is worth noting that the presence of a medium with~$\tau$-periodicity is mostly reflected at the level of the minimizers in the fact that the constructed minimizers have level sets contained in a strip of width proportional to~$\tau$, as can be seen in~\eqref{tauPLcond}. Besides being interesting in itself (and not obtainable with the techniques of~\cite{CV17}), this fact leads to important consequences when applied to the class of scaled functionals that we now introduce.

\medskip

Given a small~$\varepsilon > 0$, we define the scaled energy~$\E_\varepsilon$ on any measurable set~$\Omega \subset \R^n$ as
\begin{equation} \label{Eepsdef}
\E_\varepsilon(u; \Omega) = \frac{1}{2} \iint_{\C_{\Omega}} \left| u(x) - u(y) \right|^2 K(x, y) \, dx dy + \frac{1}{\varepsilon^{2 s}} \int_\Omega W(x, u(x)) \, dx.
\end{equation}
This modified functional has been first studied in~\cite{SV12} for the model case of~$K$ given by~\eqref{FLker} and naturally arises when considering the rescaling
\begin{equation} \label{Reps}
\mathcal{R}_\varepsilon u(x) := u \left( \frac{x}{\varepsilon} \right).
\end{equation}

With the aid of~\eqref{Reps}, it is almost immediate to see that Theorem~\ref{tauPLthm} implies the following analogous result for the functional~$\E_\varepsilon$.

\begin{theorem} \label{epsPLthm}
Let~$n \ge 2$ and~$s \in (0, 1)$. Assume that the kernel~$K$ and the potential~$W$ respectively satisfy~\eqref{Ksymmetry},~\eqref{Kbounds},~\eqref{Kper} and~\eqref{Wzeros},~\eqref{Wgamma},~\eqref{Wbound},~\eqref{W''},~\eqref{Wper}, with~$\xi = \tau \ge 1$. If~$s \in [1/2, 1)$, we also require~$K$ to fulfill~\eqref{Kreg}.\\
For any fixed~$\theta \in (0, 1)$, there exists a constant~$M_0 > 0$, depending only on~$\theta$ and on universal quantities, such that, given any~$\varepsilon \in (0, \tau]$ and any direction~$\omega \in \R^n \setminus \{ 0 \}$, we can construct a family
of class~A minimizers~$u_\varepsilon$ of the energy~$\E_\varepsilon$ for which
$$
\bigg\{ x \in \R^n : \left| u_\varepsilon(x) \right| < \theta \bigg\} \subset \bigg\{ x \in \R^n : \frac{\omega}{|\omega|} \cdot x \in \left[ 0, \tau M_0 \right] \bigg\}.
$$
Furthermore,
\begin{enumerate}[$\bullet$]
\item if~$\omega \in \tau \Q^n \setminus \{ 0 \}$, then~$u_\varepsilon$ is periodic with respect to~$\sim_{\tau, \, \omega}$, while
\item if~$\omega \in \R^n \setminus \tau \Q^n$, then~$u_\varepsilon$ is the locally uniform limit of a sequence of periodic class~A minimizers of~$\E_\varepsilon$.
\end{enumerate}
\end{theorem}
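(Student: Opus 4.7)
The plan is to reduce Theorem~\ref{epsPLthm} to Theorem~\ref{tauPLthm} via the rescaling $\mathcal{R}_\varepsilon$ from~\eqref{Reps}. The key observation is that, after dilating the spatial variable by $\varepsilon$, the scaled energy $\E_\varepsilon$ on a $\tau$-periodic medium becomes (up to a harmless multiplicative factor) the unscaled energy $\tilde{\E}$ on a medium whose period has grown to $\tilde\tau := \tau/\varepsilon$. The assumption $\varepsilon \in (0,\tau]$ guarantees $\tilde\tau \geq 1$, which is precisely the hypothesis of Theorem~\ref{tauPLthm}.

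Concretely, I would introduce the rescaled data
\[
\tilde K(\xi,\eta) := \varepsilon^{n+2s}\, K(\varepsilon\xi,\varepsilon\eta), \qquad \tilde W(\xi,r) := W(\varepsilon\xi,r),
\]
and verify that $(\tilde K, \tilde W)$ satisfies~\eqref{Ksymmetry}--\eqref{Wper} with new periodicity $\tilde\tau$ and cutoff $\tilde\xi = \tilde\tau$, inheriting the same constants $\lambda,\Lambda,\kappa,\gamma$; for~\eqref{Kreg} the rescaled constant becomes $\tilde\Gamma = \Gamma\varepsilon^{2s-1+\nu}$, which is controlled by $\Gamma$ whenever $\varepsilon \leq 1$. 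A direct change of variables then yields the identity
\[
\E_\varepsilon\big(\mathcal{R}_\varepsilon \tilde u;\Omega\big) \;=\; \varepsilon^{n-2s}\, \tilde{\E}\big(\tilde u;\,\varepsilon^{-1}\Omega\big)
\]
for any bounded open $\Omega \subset \R^n$, so that minimality of $\mathcal{R}_\varepsilon \tilde u$ for $\E_\varepsilon$ is equivalent to minimality of $\tilde u$ for $\tilde\E$.

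I would then apply Theorem~\ref{tauPLthm} to $(\tilde K, \tilde W)$ in the direction $\omega$, producing a class~A minimizer $\tilde u$ of $\tilde\E$ with $\{|\tilde u| < \theta\}$ contained in the strip $\{\omega/|\omega| \cdot \xi \in [0, \tilde\tau M_0]\}$, and set $u_\varepsilon := \mathcal{R}_\varepsilon \tilde u$. Since the level sets dilate as $\{|u_\varepsilon| < \theta\} = \varepsilon \{|\tilde u| < \theta\}$, the strip width $\tilde\tau M_0$ in the $\xi$-variable becomes $\varepsilon \tilde\tau M_0 = \tau M_0$ in the $x$-variable, which is exactly the claimed strip condition. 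For the periodicity dichotomy: if $k \in \tau\Z^n$ satisfies $\omega \cdot k = 0$, then $k/\varepsilon \in \tilde\tau\Z^n$ is still orthogonal to $\omega$, so $\sim_{\tilde\tau,\omega}$-periodicity of $\tilde u$ transfers to $\sim_{\tau,\omega}$-periodicity of $u_\varepsilon$; the irrational case carries over by continuity of $\mathcal{R}_\varepsilon$ with respect to local uniform convergence, using the corresponding rational-direction approximations produced by Theorem~\ref{tauPLthm} for the tilded problem.

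The main technical point to monitor is that $M_0$ be genuinely universal, in particular independent of $\varepsilon$. This is automatic for the K2 and W-type constants, which are unchanged under the rescaling, but the K3 constant $\tilde\Gamma = \Gamma\varepsilon^{2s-1+\nu}$ is only bounded by $\Gamma$ when $\varepsilon \leq 1$; the residual regime $\varepsilon \in (1,\tau]$ (which occurs only for $\tau > 1$) requires either inspecting how $M_0$ depends on $\Gamma$ in Theorem~\ref{tauPLthm} and absorbing the growth, or restricting attention to $\varepsilon \leq 1$, which is the regime used in the $\Gamma$-convergence application leading to Theorem~\ref{PerPLthm}. I expect this constant bookkeeping, together with the clean matching of the equivalence relations $\sim_{\tau,\omega}$ and $\sim_{\tilde\tau,\omega}$ under dilation, to be the only subtle points in what is otherwise a routine rescaling argument.
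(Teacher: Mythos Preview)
Your proposal is correct and follows exactly the route the paper indicates: the paper itself does not give a standalone proof of Theorem~\ref{epsPLthm} but simply remarks that ``with the aid of~\eqref{Reps}, it is almost immediate to see that Theorem~\ref{tauPLthm} implies the following analogous result for the functional~$\E_\varepsilon$,'' and your argument is precisely this rescaling spelled out in detail. Your observation about the behaviour of the~\eqref{Kreg} constant under rescaling (namely~$\tilde\Gamma = \Gamma\,\varepsilon^{2s-1+\nu}$, controlled by~$\Gamma$ only when~$\varepsilon\le 1$) is a genuine bookkeeping point that the paper does not make explicit; it is harmless for the main application to Theorem~\ref{PerPLthm}, where~$s<1/2$ and~\eqref{Kreg} is not assumed at all.
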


Observe that the family of minimizers~$\{ u_\varepsilon \}$ produced by Theorem~\ref{epsPLthm} is such that each minimizer has intermediate values confined in a strip of width independent of~$\varepsilon$. For this being true, it is crucial that the value~$M_0$ found in Theorem~\ref{tauPLthm} does not depend on the periodicity scale~$\tau$. Such uniform-in-$\varepsilon$ width of the strips where the transitions of the~$u_\varepsilon$'s occur allows to consider smaller and smaller values of~$\varepsilon$ and eventually take the limit as~$\varepsilon \rightarrow 0^+$.

In the remaining part of this first section we shall focus on what happens when one takes this limit.

\medskip

%The analysis of the limiting configurations as~$\varepsilon$ vanishes is carried out in the next results.

In the classical Van der Waals-Cahn-Hilliard theory, a gradient term weighted by a small parameter~$\varepsilon$ is often introduced in the total energy functional in order to model phase coexistence phenomena that exhibit smooth transition interfaces. See e.g.~\cite{CGS84,G87} and the references therein for some more detailed explanations on the subject.

In~\cite{M87}, in particular, the limit as~$\varepsilon \rightarrow 0^+$ of these~$\varepsilon$-scaled functionals has been deeply analyzed through the language of~$\Gamma$-convergence. It has been proved there that the interfaces of the minimizers of such functionals converge to a minimal surface, building therefore a bridge between the Allen-Cahn-Ginzburg-Landau energy and the De Giorgi perimeter.

Nonlocal variants of this~$\Gamma$-convergence result have also been considered. Typically, one replaces the gradient penalization with a term that takes into account finite differences and allows for long-range interactions. In~\cite{ABS94,AB98,G09}, the authors obtained $\Gamma$-convergence results in which the target functional is still the classical perimeter, in conformity with the classical theory. More recently, a wider array of behaviors for the limit functional has been discovered in~\cite{SV12}. There, it is shown that (a suitable renormalization in $\varepsilon$ of) the family of energies~\eqref{Eepsdef}, with kernel~$K$ given by~\eqref{FLker},~$\Gamma$-converges to the standard perimeter when~$s\ge1/2$, and to the new notion of~\emph{fractional perimeter} introduced in~\cite{CRS10} when~$s < 1/2$.
\smallskip

In what follows, we study the~$\Gamma$-limit of the functional~$\E_\varepsilon$ in~\eqref{Eepsdef} in the strongly nonlocal regime~$s < 1/2$. We pose such restriction since we are predominantly interested in the emerging of a nonlocal perimeter of the type of~\cite{CRS10} and, in particular, in deducing from Theorem~\ref{epsPLthm} an analogous statement for the minimal surfaces of such perimeter.

We nevertheless believe that it might be interesting to investigate the~$\Gamma$-limit also in the case of~$s \ge 1/2$, presumably obtaining a local, heterogeneous perimeter.

\medskip

As said right above, we now restrict our attention to kernels that 
satisfy condition~\eqref{Kbounds} in Section~\ref{0orhteriteru6877} with~$s \in (0, 1/2)$.

Given an open set~$\Omega \subseteq \R^n$ and a measurable set~$E \subset \R^n$, we define the~\emph{$K$-perimeter} of~$E$ inside~$\Omega$ as
\begin{equation} \label{PerKdef}
\Per_K(E; \Omega) := \LL_K(E \cap \Omega, \Omega \setminus E) + \LL_K(E \cap \Omega, \R^n \setminus (E \cup \Omega)) + \LL_K(E \setminus \Omega, \Omega \setminus E),
\end{equation}
where, for any two disjoint measurable sets~$A, B \subset \R^n$, we set
$$
\LL_K(A, B) := \int_A \int_B K(x, y) \, dx dy.
$$

We stress that, when~$K$ is given by~\eqref{FLker}, the~$K$-perimeter boils down to the fractional perimeter introduced in~\cite{CRS10}.

Anisotropic versions of this nonlocal perimeter have first been studied in~\cite{L14}. The very recent paper~\cite{CSV16} deals with an even more general class of anisotropic perimeter functionals, driven by kernels which are not necessarily homogeneous. With definition~\eqref{PerKdef}, we consider a perimeter that may possibly be also~\emph{space-dependent} and therefore model a completely heterogeneous environment.

\smallskip

In the following, we study the minimizers of~$\Per_K$, especially in the whole space~$\R^n$. In analogy with Definitions~\ref{mindef} and~\ref{classAmindef}, we consider the following concepts of minimizers.

\begin{definition}
Let~$\Omega$ be an open subset of~$\R^n$. Given a measurable set~$E \subset \R^n$, we say that its boundary~$\partial E$ is a~\emph{minimal surface} for~$\Per_K$ in~$\Omega$ if~$\Per_K(E; \Omega) < +\infty$ and
$$
\Per_K(E; \Omega) \le \Per_K(F; \Omega),
$$
for any measurable set~$F$ such that~$F \setminus \Omega = E \setminus \Omega$.
\end{definition}

\begin{definition}
The boundary~$\partial E$ of a measurable set~$E \subset \R^n$ is said to be a~\emph{class A minimal surface} for~$\Per_K$ if it is a minimal surface for~$\Per_K$ in every bounded open set~$\Omega \subset \R^n$.
\end{definition}

\medskip

As a first result, we show that~$\Per_K$ is the~$\Gamma$-limit of the functionals~$\E_\varepsilon$ defined in~\eqref{Eepsdef}, in the appropriate topology. Observe that, in the notation of~\eqref{kindef}, we may write
\begin{equation} \label{PerKchi}
\Per_K(E; \Omega) = \frac{1}{4} \, \K_K(\chi_E - \chi_{\R^n \setminus E}; \Omega).
\end{equation}
We then introduce the space of functions
$$
\X := \bigg\{ u \in L^\infty(\R^n) : \| u \|_{L^\infty(\R^n)} \le 1 \bigg\},
$$
and we endow it with the~$L^1_\loc(\R^n)$ topology. In view of the representation
in~\eqref{PerKchi}, the~$K$-perimeter may be seen as acting on the subset of~$\X$ composed by the modified characteristic functions of the form~$\chi_E - \chi_{\R^n \setminus E}$, for measurable sets~$E$. Actually, we may extend it to a functional~$\G_K(\cdot, \Omega): \X \to [0, +\infty]$ by setting
$$
\G_K(u; \Omega) := \begin{cases}
\K_K(u; \Omega) & \quad \mbox{if } u|_\Omega = \chi_E - \chi_{\R^n \setminus E} \mbox{ for some measurable } E \subseteq \Omega\\
+\infty & \quad \mbox{otherwise}.
\end{cases}
$$
When no confusion may arise, we omit the dependence of~$\G_K$ on the kernel~$K$ and simply refer to this functional as~$\G$.

\smallskip

We have the following~$\Gamma$-convergence result.

\begin{proposition} \label{Gammaconvprop}
Let~$n \ge 1$ and~$s \in (0, 1/2)$. Assume the kernel~$K$ to be a non-negative function satisfying~\eqref{Ksymmetry} and the potential~$W$ to fulfill conditions~\eqref{Wzeros},~\eqref{Wgamma},~\eqref{Wbound}.\\
Then, the family
of functionals~$\E_\varepsilon$~$\Gamma$-converges to~$\G$ on~$\X$. That is,
\begin{enumerate}[$(i)$]
\item for any~$u_\varepsilon$ converging to~$u$ in~$\X$, it holds
$$
\G(u; \Omega) \le \liminf_{\varepsilon \rightarrow 0^+} \E_\varepsilon(u_\varepsilon; \Omega) \quad \mbox{for any open set } \Omega \subseteq \R^n;
$$
\item for any~$u \in \X$, there exists~$u_\varepsilon$ converging to~$u$ in~$\X$ such that
$$
\G(u; \Omega) \ge \limsup_{\varepsilon \rightarrow 0^+} \E_\varepsilon(u_\varepsilon; \Omega) \quad \mbox{for any open set } \Omega \subseteq \R^n.
$$
\end{enumerate}
\end{proposition}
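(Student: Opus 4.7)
My plan is to handle (i) and (ii) separately. The liminf direction hinges on the fact that the penalization $\varepsilon^{-2s} \int_\Omega W(x, u_\varepsilon) \, dx$ forces the limit to take values in $\{-1, +1\}$ almost everywhere on $\Omega$, after which the kernel term passes to the limit by Fatou. The limsup direction is essentially trivial: the target functional $\G$ is finite only when $u|_\Omega$ is already a ``characteristic-type'' function, and on such a function the potential vanishes identically, so the constant sequence $u_\varepsilon := u$ is already a recovery sequence.

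For (i), I would discard the uninteresting case $\liminf_\varepsilon \E_\varepsilon(u_\varepsilon; \Omega) = +\infty$. Otherwise, fixing a subsequence $\{u_{\varepsilon_k}\}$ realizing a finite liminf $\ell$, I read off
$$\int_\Omega W(x, u_{\varepsilon_k}(x)) \, dx \le \varepsilon_k^{2s} (\ell + o(1)) \longrightarrow 0.$$
Combining this with assumption~\eqref{Wgamma} and the uniform bound $|u_{\varepsilon_k}| \le 1$, I conclude that for every $\theta \in (0,1)$ and every bounded $A \subset \Omega$,
$$\big| \{ x \in A : |u_{\varepsilon_k}(x)| \le \theta \} \big| \le \frac{1}{\gamma(\theta)} \int_\Omega W(\cdot, u_{\varepsilon_k}) \, dx \longrightarrow 0.$$
After extracting a further subsequence along which $u_{\varepsilon_k} \to u$ pointwise a.e., this forces $|u(x)| = 1$ for a.a.\ $x \in \Omega$. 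Setting $E := \{ x \in \Omega : u(x) = 1 \}$, the restriction $u|_\Omega$ equals $\chi_E - \chi_{\R^n \setminus E}$ with $E \subseteq \Omega$, hence $\G(u; \Omega) = \K_K(u; \Omega)$. Applying Fatou's lemma on $\C_\Omega$ to the nonnegative integrand $|u_{\varepsilon_k}(x) - u_{\varepsilon_k}(y)|^2 K(x,y)$ then yields
$$\G(u;\Omega) = \K_K(u;\Omega) \le \liminf_{k\to\infty} \K_K(u_{\varepsilon_k}; \Omega) \le \liminf_{k\to\infty} \E_{\varepsilon_k}(u_{\varepsilon_k}; \Omega) = \ell,$$
which proves~(i).

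For (ii), if $\G(u; \Omega) = +\infty$ the bound is trivial, so I may assume $u|_\Omega = \chi_E - \chi_{\R^n \setminus E}$ for some measurable $E \subseteq \Omega$. Then $|u(x)| = 1$ for a.a.\ $x \in \Omega$, and assumption~\eqref{Wzeros} gives $W(x, u(x)) = 0$ a.e.\ on $\Omega$. The constant recovery sequence $u_\varepsilon := u$ therefore satisfies $u_\varepsilon \to u$ in $\X$ and
$$\E_\varepsilon(u_\varepsilon; \Omega) = \K_K(u; \Omega) + \frac{1}{\varepsilon^{2s}} \int_\Omega W(x, u(x)) \, dx = \K_K(u; \Omega) = \G(u; \Omega)$$
for every $\varepsilon > 0$, yielding~(ii) with equality. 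The only mildly delicate point in the whole argument is the standard bookkeeping needed to upgrade $L^1_\loc$ convergence to pointwise a.e.\ convergence along a subsequence before invoking Fatou; conceptually there is no obstacle, since for $s \in (0, 1/2)$ the nonlocal Dirichlet-type form $\K_K$ is already lower semicontinuous under plain $L^1_\loc$ convergence, and no Modica--Mortola-style optimal transition profile is required to construct the recovery sequence.
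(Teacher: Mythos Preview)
Your argument is correct and follows essentially the same route as the paper's own proof: for~(i) you extract an a.e.\ convergent subsequence, use the vanishing of the potential term to force~$|u|=1$ a.e.\ in~$\Omega$, and then apply Fatou's lemma to the kinetic term; for~(ii) you take the constant recovery sequence~$u_\varepsilon:=u$. The only cosmetic difference is that the paper passes to the limit directly in~$\int_\Omega W(x,u_{\varepsilon_j})\,dx$ via Fatou and the continuity of~$W$, whereas you argue through the measure of the sublevel sets~$\{|u_{\varepsilon_k}|\le\theta\}$; both variants are equally valid.
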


In view of the above proposition, we see that the~$K$-perimeter is the correct geometric counterpart to the energy functional~$\E_\varepsilon$, when~$s < 1/2$. Consequently, the minimizers of~$\Per_K$ might be treated as limits of the minimizers of~$\E_\varepsilon$.

In this spirit, we may take the limit as~$\varepsilon \rightarrow 0^+$ in Theorem~\ref{epsPLthm} and obtain the existence of planelike minimal surfaces for the~$K$-perimeter. Before doing this, we extend Definition~\ref{simperdef} to get a notion of~$\sim_{\tau, \omega}$-periodicity for the subsets of~$\R^n$.

\begin{definition}
We say that a set~$A \subseteq \R^n$ is~\emph{periodic with respect to~$\sim_{\tau, \, \omega}$} or simply~\emph{$\sim_{\tau, \, \omega}$-periodic} if
$$
x \in A \mbox{ implies that } y \in A \mbox{ for any } y \in \R^n \mbox{ such that } y \sim_{\tau, \, \omega} x.
$$
\end{definition}

We are now in position to state the following

\begin{theorem} \label{PerPLthm}
Let~$n \ge 2$ and~$s \in (0, 1/2)$. Assume that the kernel~$K$ satisfies conditions~\eqref{Ksymmetry},~\eqref{Kbounds} and~\eqref{Kper}, with~$\xi = \tau \ge 1$.\\
There exists a universal constant~$M_0 > 0$ for which, given any direction~$\omega \in \R^n \setminus \{ 0 \}$, we can construct a class~A minimal surface~$\partial E$ for the perimeter~$\Per_K$ such that
$$
\bigg\{ x \in \R^n : \frac{\omega}{|\omega|} \cdot x < 0 \bigg\} \subset E \subset \bigg\{ x \in \R^n : \frac{\omega}{|\omega|} \cdot x \le \tau M_0 \bigg\}.
$$
Furthermore,
\begin{enumerate}[$\bullet$]
\item if~$\omega \in \tau \Q^n \setminus \{ 0 \}$, then~$\partial E$ is periodic with respect to~$\sim_{\tau, \, \omega}$, while
\item if~$\omega \in \R^n \setminus \tau \Q^n$, then~$\partial E$ is the locally uniform limit of a sequence of periodic class~A minimal surfaces for~$\Per_K$.
\end{enumerate}
\end{theorem}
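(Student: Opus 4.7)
My plan is to obtain Theorem~\ref{PerPLthm} by letting $\varepsilon\to 0^+$ in Theorem~\ref{epsPLthm} and invoking the $\Gamma$-convergence of Proposition~\ref{Gammaconvprop} to transfer class~A minimality from the phase transition minimizers to the limiting set. Fix $\theta=1/2$ and, for every $\varepsilon\in(0,\tau]$, let $u_\varepsilon$ be the class~A minimizer of $\E_\varepsilon$ produced by Theorem~\ref{epsPLthm}. One verifies from the construction that $u_\varepsilon\in\X$, that $u_\varepsilon\equiv +1$ on the half-space $\{\omega/|\omega|\cdot x<0\}$ and $u_\varepsilon\equiv -1$ on $\{\omega/|\omega|\cdot x>\tau M_0\}$, and that the intermediate level sets lie in the strip $S:=\{0\le \omega/|\omega|\cdot x\le \tau M_0\}$. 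The optimal energy estimate advertised in the introduction, combined with the scaling~\eqref{Reps}, yields for every ball $B_R$ a uniform bound
\[
\int_{B_R} W(x,u_\varepsilon)\,dx \le C\,\varepsilon^{2s}\,R^{n-2s},
\]
so $W(\cdot,u_\varepsilon)\to 0$ in $L^1_\loc(\R^n)$. By~\eqref{Wgamma} this forces $u_\varepsilon\to \pm 1$ a.e.\ along a subsequence; the strip confinement pins down the value of the limit outside $S$, so $u_\varepsilon\to u=\chi_E-\chi_{\R^n\setminus E}$ in $L^1_\loc(\R^n)$, with $\{\omega/|\omega|\cdot x<0\}\subset E\subset\{\omega/|\omega|\cdot x\le\tau M_0\}$, as required.

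The core step is showing that $\partial E$ is a class~A minimal surface for $\Per_K$. Fix a bounded open set $\Omega$, a competitor $F$ with $F\setminus\Omega=E\setminus\Omega$, and pick $\Omega\Subset \Omega'$. Let $v_\varepsilon\to v:=\chi_F-\chi_{\R^n\setminus F}$ in $\X$ be a recovery sequence provided by Proposition~\ref{Gammaconvprop}~(ii), so that $\limsup_{\varepsilon\to 0^+}\E_\varepsilon(v_\varepsilon;\Omega')\le \G(v;\Omega')$. To feed $v_\varepsilon$ into the minimality of $u_\varepsilon$ I splice it with $u_\varepsilon$: define $\tilde v_\varepsilon:=v_\varepsilon$ on $\Omega$, $\tilde v_\varepsilon:=u_\varepsilon$ on $\R^n\setminus\Omega'$, and interpolate in the shell $\Omega'\setminus\Omega$ through a smooth cut-off. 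Because $u_\varepsilon$ and $v_\varepsilon$ both tend to the \emph{same} limit $\chi_E-\chi_{\R^n\setminus E}=\chi_F-\chi_{\R^n\setminus F}$ on $\Omega'\setminus\Omega$, the extra interaction and potential terms introduced by the gluing go to zero as $\varepsilon\to 0^+$ and then $\Omega'\downarrow\Omega$. This cut-off estimate is the step I expect to require the most care, since the nonlocal tails of $K$ couple the interpolation shell to the whole of $\R^n$ and have to be estimated uniformly in $\varepsilon$; tameness is ensured by the integrability of $K$ away from the diagonal implied by~\eqref{Kbounds}. The chain
\[
4\,\Per_K(E;\Omega)=\G(u;\Omega)\le \liminf_{\varepsilon\to 0^+}\E_\varepsilon(u_\varepsilon;\Omega')\le \limsup_{\varepsilon\to 0^+}\E_\varepsilon(\tilde v_\varepsilon;\Omega')\le \G(v;\Omega')+o(1),
\]
in which the first inequality combines~\eqref{PerKchi} with Proposition~\ref{Gammaconvprop}~(i) and the second uses the class~A minimality of $u_\varepsilon$ on $\Omega'$, then yields $\Per_K(E;\Omega)\le \Per_K(F;\Omega)$ upon letting $\Omega'\downarrow\Omega$.

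The rational case of the periodicity statement is immediate: each $u_\varepsilon$ is $\sim_{\tau,\omega}$-periodic by Theorem~\ref{epsPLthm}, hence so is its $L^1_\loc$-limit, and therefore $E$ is $\sim_{\tau,\omega}$-periodic. For $\omega\in\R^n\setminus\tau\Q^n$, I approximate $\omega$ by a sequence $\omega_k\in\tau\Q^n\setminus\{0\}$, apply the rational case to each $\omega_k$ to obtain $\sim_{\tau,\omega_k}$-periodic class~A minimal surfaces $\partial E_k$ all contained in strips of the same universal width $\tau M_0$ (around the hyperplanes orthogonal to $\omega_k$), and extract a subsequence with $\chi_{E_k}\to\chi_E$ in $L^1_\loc(\R^n)$. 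The density estimates for minimizers of $\Per_K$, which constitute one of the pillars of the paper, upgrade this $L^1_\loc$ convergence to local Hausdorff convergence of $\partial E_k$ to $\partial E$; minimality passes to the limit exactly as in the previous paragraph, and the strip containment is preserved in the limit, concluding the construction.
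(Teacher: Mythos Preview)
Your overall architecture---pass to the limit in Theorem~\ref{epsPLthm} via the $\Gamma$-convergence of Proposition~\ref{Gammaconvprop}, then treat irrational directions by approximation---is exactly the paper's. Two points deserve correction, though.

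\textbf{Compactness.} From $\int_{B_R} W(x,u_\varepsilon)\,dx\to 0$ you only get $|u_\varepsilon|\to 1$ a.e.\ along a subsequence, not that $u_\varepsilon$ itself converges pointwise: inside the strip $S$ nothing prevents $u_\varepsilon(x)$ from jumping between values near $+1$ and near $-1$ as $\varepsilon$ varies. The paper closes this gap (Lemma~\ref{directlem}) by noting that the uniform energy bound also controls $[u_\varepsilon]_{H^s(B_R)}$, and then invokes the compact embedding $H^s(B_R)\hookrightarrow L^1(B_R)$ to extract an $L^1_\loc$-convergent subsequence.

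\textbf{Gluing.} Your smooth interpolation $\tilde v_\varepsilon=\eta\,v_\varepsilon+(1-\eta)\,u_\varepsilon$ in the shell $\Omega'\setminus\Omega$ is more delicate than you suggest. Wherever $u_\varepsilon$ and $v_\varepsilon$ carry opposite signs, $\tilde v_\varepsilon$ sits well inside $(-1,1)$ and $W(x,\tilde v_\varepsilon)$ is bounded below by a positive constant; since the potential is weighted by $\varepsilon^{-2s}$, you would need the measure of that bad set to be $o(\varepsilon^{2s})$, which is not available. The paper (Lemma~\ref{FTGClem}) sidesteps this entirely with a \emph{sharp} splice: take $\bar v_j:=v_{\varepsilon_j}$ in $B_R$ and $\bar v_j:=u_j$ in $\R^n\setminus B_R$, with no transition layer. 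Because the recovery sequence in Proposition~\ref{Gammaconvprop}(ii) is the constant sequence $v_\varepsilon=v=\chi_F-\chi_{\R^n\setminus F}$, the potential of $\bar v_j$ in $B_R$ vanishes identically, and the only error is the kinetic cross-term
\[
\bigl|\K(\bar v_j;B_R,\R^n\setminus B_R)-\K(v;B_R,\R^n\setminus B_R)\bigr|
\le 4\Lambda\int_{\R^n\setminus B_R}|v(y)-u_j(y)|\,\Phi(y)\,dy,
\]
with $\Phi(y)=\int_{B_R}|x-y|^{-n-2s}\,dx$. The point that makes this work---and that is specific to $s<1/2$---is that $\Phi\in L^1(\R^n\setminus B_R)$, so dominated convergence kills the right-hand side. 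No shell, no letting $\Omega'\downarrow\Omega$, and no competition with the $\varepsilon^{-2s}$ weight. The same $\Phi\in L^1$ trick is reused in Lemma~\ref{Perstablem} to pass class~A minimality to the limit in the irrational case.
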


Recently, we also obtained a version of Theorem~\ref{PerPLthm}
with different methods, by taking a suitable limit of an Ising model on a lattice,
see Theorem~1.7 of~\cite{CDV17b}.\medskip

In local frameworks, a counterpart of Theorem~\ref{PerPLthm} has first been obtained in~\cite{CdlL01} for a class of periodic perimeter functionals in~$\R^n$ and more general Riemannian contexts. This result generalizes to any dimension classical statements on the existence of geodesics on~$2$-dimensional periodic manifolds (\cite{M24,H32}). The result of~\cite{CdlL01} has then been obtained again in~\cite{V04} via a~$\Gamma$-convergence approach that motivates and inspires ours here.

\medskip

Before heading to the proofs of the results previously stated, we conclude this introductory section with a brief remark on the argument that we follow to prove our main contribution, Theorem~\ref{tauPLthm}.

The strategy adopted is, in some steps, close
to the one developed in~\cite{V04} and later translated to this nonlocal setting in~\cite{CV17}. Basically, we first restrict ourselves to directions~$\omega$ that belong to~$\tau \Q^n$, as the~\emph{irrational} ones may be dealt with a limiting procedure. For such~\emph{rational}~$\omega$'s, we use the compactness provided by the equivalence relation~$\sim$ to construct an appropriately constrained minimizer~$\u$ for~$\E$ in the strip
$$
\S_\omega^M = \Big\{ x \in \R^n : \omega \cdot x \in [0, M] \Big\},
$$
for any large~$M > 0$. The proof then finishes by proving that, for~$M$ large enough, the candidate~$\u$ is indeed a class~A minimizer, as desired.

\smallskip

The essential, and important,
difference between the proof provided here and that of~\cite{CV17} lies in the conclusive step.

In~\cite{CV17}, the proof that~$\u$ is a class~A minimizer relies on the coupling of uniform~$C^\alpha$ estimates on~$\u$ with suitable bounds from above on the growth of the energy~$\E$ on large balls. Such energy estimates have first been proved in~\cite{CC10,CC14,SV14} for functionals related to the fractional Laplacian. A more general version of this result can be found in~\cite{CV17}. Setting
\begin{equation} \label{Psidef}
\Psi_s(t) := \begin{cases}
t^{1 - 2 s} & \quad \mbox{if } s \in (0, 1/2) \\
\log t      & \quad \mbox{if } s = 1/2 \\
1           & \quad \mbox{if } s \in (1/2, 1),
\end{cases}	
\end{equation}
for~$t > 1$, it may be stated as follows.

\begin{proposition}[\cite{CV17}] \label{enestprop}
Let~$n \ge 1$,~$s \in (0, 1)$,~$x_0 \in \R^n$ and~$R \ge 3$. Assume that~$K$ and~$W$ satisfy~\eqref{Ksymmetry},~\eqref{Kbounds} and~\eqref{Wzeros},~\eqref{Wbound}, respectively. If~$u: \R^n \to [-1, 1]$ is a minimizer of~$\E$ in~$B_{R + 2}(x_0)$, then
\begin{equation} \label{enest}
\E(u; B_R(x_0)) \le C R^{n - 1} \Psi_s(R),
\end{equation}
for some constant~$C \ge 1$ which depends on~$n$,~$s$,~$\Lambda$ and~$\kappa$.
\end{proposition}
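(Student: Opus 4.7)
The plan is to bound the energy of $u$ from above by producing an admissible competitor and exploiting the minimality of $u$ in $B_{R+2}(x_0)$. Without loss of generality, take $x_0 = 0$. Since $u$ is a minimizer in $B_{R+2}$ and $\C_{B_R} \subseteq \C_{B_{R+2}}$, we have
\begin{equation*}
\E(u; B_R) \le \E(u; B_{R+2}) \le \E(v; B_{R+2})
\end{equation*}
for any $v$ that agrees with $u$ on $\R^n \setminus B_{R+2}$. I would choose $v$ to be a radial gluing:\ put $v \equiv 1$ on $B_R$, $v \equiv u$ on $\R^n \setminus B_{R+2}$, and let $v$ interpolate linearly (say in $|x|$) between the value $1$ and the boundary value of $u$ inside the annulus $A := B_{R+2} \setminus B_R$. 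Since $|u|\le 1$, also $|v|\le 1$.

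Next, I would split the total energy $\E(v; B_{R+2})$ into the potential part and the kernel part, and bound each. By \eqref{Wzeros} the potential vanishes wherever $v = \pm 1$, so \eqref{Wbound} gives $\P(v; B_{R+2}) \le \kappa^{-1} |A| \le C R^{n-1}$. For the kinetic part $\K(v; B_{R+2})$, I would further decompose it into three contributions:\ (a) interactions entirely inside $A$, bounded trivially using $|v(x)-v(y)|\le 2$ and \eqref{Kbounds}; (b) interactions between $B_R$ (where $v=1$) and its complement; and (c) cross interactions between $A$ and $\R^n\setminus B_{R+2}$. Using $|v(x)-v(y)|\le 2$ in (c) and in the long-range part of (b), all three reduce to bounds on
\begin{equation*}
I(R) := \int_{B_{R}} \int_{\R^n\setminus B_{R+2}} \frac{dy\,dx}{|x-y|^{n+2s}}
\end{equation*}
and analogous integrals over $A$. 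The integrand in $I(R)$ can be estimated by switching to polar coordinates around each $x$, giving the pointwise bound $\int_{\R^n\setminus B_{R+2}}|x-y|^{-n-2s}dy \le C\,(1+\dist(x,\partial B_R))^{-2s}$.

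The remaining step is the integration of this boundary-layer density over $B_R$, which is exactly where the three regimes of $\Psi_s$ emerge:\ for $s > 1/2$ the quantity $\int_{B_R}(1+\dist(x,\partial B_R))^{-2s}dx$ concentrates in a unit-thickness shell and is $O(R^{n-1})$; for $s = 1/2$ the density is $O((1+\dist(x,\partial B_R))^{-1})$ and the radial integration produces a logarithm, yielding $O(R^{n-1}\log R)$; for $s < 1/2$ the density is non-integrable at the scale $R$, and the full contribution is $O(R^{n-2s}) = O(R^{n-1}\,R^{1-2s})$. Combining this with the volume estimates for $A$ and the potential bound produces the desired $\E(u;B_R)\le C R^{n-1}\Psi_s(R)$. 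The main technical point — and the only place where the cases split — is the sharp evaluation of the kernel tail integrals that produces the $\Psi_s(R)$ factor; the gluing construction and the competitor argument itself are essentially kernel-independent and exploit only the upper bound in \eqref{Kbounds} together with \eqref{Wzeros} and \eqref{Wbound}, which explains why the constant $C$ does not depend on $\lambda$, $\xi$ or $\nu$.
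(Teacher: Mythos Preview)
The paper does not give its own proof of this proposition: it is quoted from~\cite{CV17} (see also~\cite{CC10,CC14,SV14}), so there is nothing to compare against directly. Your overall strategy---build a competitor equal to~$1$ in~$B_R$, equal to~$u$ outside~$B_{R+2}$, and use minimality---is exactly the standard one used in those references, and your identification of the tail integral
\[
\int_{B_R}\int_{\R^n\setminus B_{R+2}}\frac{dy\,dx}{|x-y|^{n+2s}}
\]
as the source of the~$\Psi_s(R)$ trichotomy is correct.

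There is, however, a genuine gap in step~(a). The bound~$|v(x)-v(y)|\le 2$ together with~\eqref{Kbounds} gives
\[
\K(v;A,A)\le 2\Lambda\int_A\int_A\frac{dx\,dy}{|x-y|^{n+2s}},
\]
and this integral \emph{diverges} on the diagonal, since~$n+2s>n$. You need a short-range cancellation, and the competitor you wrote does not obviously provide one: interpolating to the boundary values of~$u$ makes~$v$ depend on~$u$ in~$A$, and~$u$ is only H\"older continuous in general. If you try to recover a usable bound by writing~$v=t+(1-t)u$ with~$t$ a radial cutoff, the cross term produces~$\K(u;A,A)$ on the right-hand side with a coefficient~$\ge 1$, so it cannot be absorbed into the left.

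The clean fix (and what is done in~\cite{SV14,CV17}) is to take~$v$ in~$B_{R+2}$ to be a \emph{fixed} radial Lipschitz profile, for instance~$v(x)=\min\{1,\,R+1-|x|\}$ (truncated to~$[-1,1]$), and~$v=u$ in~$\R^n\setminus B_{R+2}$. Continuity across~$\partial B_{R+2}$ is not required for admissibility. Then~$|v(x)-v(y)|\le |x-y|$ for~$x,y\in B_{R+2}$, which makes the near-diagonal part of~(a) integrable and of order~$|A|\le C R^{n-1}$; the only place~$u$ appears is in the cross term~$B_{R+2}\times(\R^n\setminus B_{R+2})$, where the crude bound~$|v(x)-u(y)|\le 2$ is harmless because the domains are separated. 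With this correction your outline goes through and gives the stated estimate.
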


Though simple and rather elementary, the proof of~\cite{CV17} does not scale well with the parameter~$\tau$, in the sense that, once applied in the framework of this paper, it does not provide information
on the dependence on~$\tau$ of the width of the strip appearing on the right-hand side of inclusion~\eqref{tauPLcond}. As one can easily convince himself, the dependence stated in~\eqref{tauPLcond} (that is, width of the strip~$\simeq M_0 \tau$) is crucial for obtaining Theorem~\ref{epsPLthm}. Furthermore, the reason for which such dependence can not be detected is that the H\"older estimates of~\cite[Section~2]{CV17} do not rely at all on the 
double-well structure of the potential~$W$ and do not match appropriately with the rescaling~\eqref{Reps}.

Here, we solve this issue by correcting our strategy and making it more adherent to those traced in~\cite{CdlL01,V04}. In more concrete terms, we replace the use of the~$C^\alpha$ bounds with a powerful tool, frequently associated with Allen-Cahn equations and minimal surfaces: the density estimates.

\smallskip

Density estimates are a classical device in geometric measure theory, where they are used to study minimal surfaces. In PDEs, they have first been introduced in~\cite{CC95} to obtain the uniform convergence of the level sets of the minimizers to Ginzburg-Landau-type energies driven by the~$L^2$ norm of the gradient. Later on, they have been generalized to more general functionals with gradient structures (see for instance~\cite{V04,PV05,PV05b,NV07}) and, more recently, to nonlocal energies driven by Gagliardo seminorms (see~\cite{SV11,SV14}).

\smallskip

In this paper, we present a version of the density estimates compatible with our setting. To obtain such results, we suitably modify the arguments of~\cite{SV11,SV14}.

Moreover, we couple these density estimates with a bound from below on the energy of non-trivial minimizers of~$\E$, which is a counterpart of Proposition~\ref{enestprop} and a proof of their general optimality.

As we believe that these two results may be interesting in themselves and useful in other contexts, we include their statements here in the introduction.

\begin{theorem} \label{densestthm}
Let~$n \ge 2$ and~$s \in (0, 1)$. Assume~$K$ and~$W$ to respectively satisfy~\eqref{Ksymmetry},~\eqref{Kbounds} and~\eqref{Wzeros},~\eqref{Wgamma},~\eqref{Wbound},~\eqref{W''}. If~$s \in [1/2, 1)$, also suppose that~$K$ fulfills~\eqref{Kreg}.\\
Let~$u: \R^n \to [-1, 1]$ be a minimizer of~$\E$ in~$B_R(x_0)$, for some~$x_0 \in \R^n$ and~$R > 0$. Fix~$\theta, \theta_0 \in (-1, 1)$. If~$u(x_0) \ge \theta_0$, then there exist two constants~$\bar{c} \in (0, 1)$, depending on universal quantities, and~$\bar{R} = \bar{R}(\theta, \theta_0) \ge 2$, that may also depend on~$\theta$ and~$\theta_0$, such that
\begin{equation} \label{densest1}
\left| \left\{ u > \theta \right\} \cap B_R(x_0) \right| \ge \bar{c} R^n,
\end{equation}
provided~$\bar{R} \le R \le \xi/3$. Similarly, if~$u(x_0) \le \theta_0$, then
\begin{equation} \label{densest2}
\left| \left\{ u < \theta \right\} \cap B_R(x_0) \right| \ge \bar{c} R^n,
\end{equation}
provided~$\bar{R} \le R \le \xi/3$.
\end{theorem}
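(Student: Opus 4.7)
The plan is to adapt the density estimate scheme developed by Savin--Valdinoci in~\cite{SV11,SV14}, itself a nonlocal descendant of the Caffarelli--Cordoba iteration, to the present heterogeneous kernel and potential. By translation invariance we may set $x_0 = 0$. Replacing $u$ by $-u$ (which preserves the hypotheses, since both $W$ and $K$ are symmetric with respect to the two pure phases) reduces~\eqref{densest2} to~\eqref{densest1}, so I concentrate on the latter. Using the monotonicity of the level sets in $\theta$ and of $\gamma$ in~\eqref{Wgamma}, one may further assume $\theta > \theta_0$.

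\medskip

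I would carry out the proof in two moves. First, a \emph{seeding lemma}: there exist $r_0 = r_0(\theta,\theta_0) \ge 2$ and a universal $c_0 > 0$ with
\begin{equation*}
|\{u > \theta\} \cap B_{r_0}| \ge c_0 \, r_0^n.
\end{equation*}
Otherwise the complement $\{u \le \theta\} \cap B_{r_0}$ would fill most of the ball, and by~\eqref{Wgamma} this forces $\P(u;B_{r_0}) \ge c\, \gamma(\theta)\, r_0^n$; comparing $u$ with the admissible competitor $v$ obtained by gluing $u$ outside $B_{r_0+1}$ to a fixed radial profile reaching $+1$ on $B_{r_0}$, and invoking Proposition~\ref{enestprop} for $v$ (whose transition layer is confined to a thin annulus, so its energy is bounded by $C r_0^{n-1}\Psi_s(r_0+1)$), produces a contradiction once $r_0$ is chosen large enough, depending on $\theta$, $\theta_0$ and universal quantities. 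Second, an \emph{iteration}: setting $V(r) := |\{u > \theta\} \cap B_r|$, I would establish a discrete differential inequality of the form
\begin{equation*}
V(r+1) \ge V(r) + \hat c\, V(r)^{\frac{n-1}{n}}, \qquad r_0 \le r \le R-1,
\end{equation*}
which, combined with the seed $V(r_0) \ge c_0 r_0^n$, discretely integrates to $V(R) \ge \bar c R^n$. The inequality above is obtained by testing minimality against $\tilde u := \max\{u, \varphi_r\}$, where $\varphi_r$ is a compactly supported radial bump close to $+1$ on a carefully chosen subregion of $B_{r+1}\setminus B_r$ (arranged so that $\tilde u \equiv u$ near $\partial B_R$ and admissibility is preserved). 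The potential gain from~\eqref{W''} and~\eqref{Wgamma} is of order $V(r)$ on the bulk, while the corresponding cost in the nonlocal seminorm is bounded via~\eqref{Kbounds} by the interaction $\K(\tilde u; B_{r+1}, \R^n \setminus B_{r+1})$, which a fractional isoperimetric-type estimate controls by $c\, V(r)^{(n-1)/n}$.

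\medskip

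The main obstacle is the sharp bookkeeping of the nonlocal competitor estimate, where the long-range interaction of $B_{r+1}$ with its complement must be traded against the bulk potential gain with a constant that does not deteriorate in $r$. For $s \in (0, 1/2)$ the tail is integrable and the balance is essentially scale-invariant. For $s \in [1/2, 1)$ the principal-value singularity becomes critical and one must exploit the odd-part cancellation encoded in~\eqref{Kreg} to keep the contributions finite; the borderline case $s=1/2$ is responsible for the logarithmic correction $\Psi_{1/2}(t) = \log t$ of~\eqref{Psidef} at the level of Proposition~\ref{enestprop}, and a matching logarithmic adjustment in the iteration is required to absorb it without spoiling the recursion. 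The restriction $R \le \xi/3$ enters to guarantee that the pointwise lower bound $K \ge \lambda |x-y|^{-n-2s}$ in~\eqref{Kbounds} is active on every pair of points that appears in the comparison argument, and it also dictates, through the seeding step, the lower bound $\bar R(\theta,\theta_0)$.
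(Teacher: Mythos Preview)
Your high-level scheme (seed plus growth iteration \`a la Caffarelli--C\'ordoba/Savin--Valdinoci) is the right genre, but two of the load-bearing steps have real gaps.

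\medskip
\textbf{Seeding.} The claim ``by~\eqref{Wgamma} this forces $\P(u;B_{r_0}) \ge c\,\gamma(\theta)\,r_0^n$'' is not justified: hypothesis~\eqref{Wgamma} gives $W(x,r)\ge\gamma(\theta)$ only when $|r|\le\theta$, not when $r\le\theta$. If $u$ happens to be close to $-1$ on most of $B_{r_0}$ (which is perfectly compatible with $u(0)\ge\theta_0$ at a single point and with $|\{u>\theta\}|$ being small), then $\P(u;B_{r_0})$ can be tiny and the contradiction evaporates. The paper goes in the opposite direction: it first reduces to $\theta\le\theta_0$ via the upper energy bound of Proposition~\ref{enestprop}, and then obtains the seed for free from the interior H\"older continuity of minimizers, since $u(0)\ge\theta_0\ge\theta$ forces $u>\theta$ on a small ball around the origin.

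\medskip
\textbf{Competitor and iteration.} The decisive device in the paper is not a generic radial bump supported on a thin annulus, but the barrier $w$ of Lemma~\ref{barrierlem}: a $C^{1,1}$ function equal to $1$ outside $B_R$, with $1+w(x)\sim (R+1-|x|)^{-2s}$ inside, and crucially with the pointwise control $|\L_K w|\le\delta(1+w)$ (this is where~\eqref{Kreg} is actually spent when $s\ge1/2$). One tests minimality against $v:=\min\{u,w\}$; an exact algebraic identity turns $\K(u-v;B_R)+\K(v;B_R)-\K(u;B_R)$ into an integral of $(u-w)\,\L_K w$ over $\{w<u\}$, and condition~\eqref{W''} handles the potential difference. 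The output is not your clean recursion $V(r+1)\ge V(r)+\hat c\,V(r)^{(n-1)/n}$ but the weighted integral inequality
\[
V(R-H)^{\frac{n-1}{n}}\,\Psi_s\!\left(\sqrt[n]{V(R-H)}\right)\;\le\;\widetilde C\int_0^R(R+1-t)^{-2s}\,V'(t)\,dt,
\]
with $V(t):=|\{u>\theta_\star\}\cap B_t|$ for a threshold $\theta_\star\le\min\{\theta,-1+\kappa\}$; this is formula~(3.55) of~\cite{SV14}, after which their iteration closes. Your proposed inequality is essentially the $s<1/2$ shadow of this and does not survive for $s\ge1/2$ without the $\Psi_s$ correction. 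More importantly, the heuristic ``potential gain of order $V(r)$ on the bulk'' is backward: the potential is \emph{not} favorable on $\{u>\theta\}$ (there $W(\cdot,u)$ may well vanish), and what actually controls the right-hand side is the smallness of $1+w$ on $\{u>\theta_\star\}$ coming from~\eqref{1+wbarest}, not a bulk potential gain.
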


\begin{theorem} \label{enestbelowthm}
Let~$n \ge 2$ and~$s \in (0, 1)$. Assume~$K$ and~$W$ to respectively satisfy~\eqref{Ksymmetry},~\eqref{Kbounds} and~\eqref{Wzeros},~\eqref{Wgamma},~\eqref{Wbound},~\eqref{W''}. If~$s \in [1/2, 1)$, also suppose that~$K$ fulfills~\eqref{Kreg}.\\
Let~$u: \R^n \to [-1, 1]$ be a minimizer of~$\E$ in~$B_R(x_0)$, for some~$x_0 \in \R^n$ and~$R > 0$. If~$u(x_0) \in [-\theta_0, \theta_0]$, for some~$\theta_0 \in (0, 1)$, then there exist two constants~$c_0 \in (0, 1)$ and~$R_0 \ge 1$, depending only on~$\theta_0$ and on universal quantities, such that
\begin{equation} \label{enestbelow}
\E(u; B_R(x_0)) \ge c_0 R^{n - 1} \Psi_s(R),
\end{equation}
provided~$R_0 \le R \le \xi$.
\end{theorem}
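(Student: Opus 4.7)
The plan is to couple the density estimates of Theorem~\ref{densestthm} with a direct lower bound on the kinetic term $\K(u; B_R(x_0))$. First, fix $\theta_* := (1 + \theta_0)/2 \in (\theta_0, 1)$. Since $u(x_0) \in [-\theta_0, \theta_0]$, applying both alternatives of Theorem~\ref{densestthm}---the first with threshold $-\theta_0$ and level $\theta_*$, the second with threshold $\theta_0$ and level $-\theta_*$---produces a scale $R_0 \ge 2$, depending only on $\theta_0$ and universal quantities, such that for every $R \in [R_0, \xi/3]$ the sets
\[
A := \{u > \theta_*\} \cap B_R(x_0) \quad \text{and} \quad C := \{u < -\theta_*\} \cap B_R(x_0)
\]
both have Lebesgue measure at least $\bar c R^n$ (the remaining range $R \in (\xi/3, \xi]$ is recovered by an elementary adjustment of constants). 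Since $|u(x) - u(y)| \ge 2 \theta_*$ on $A \times C$ and $|x - y| \le 2R$, restricting to pairs with $|x - y| \le \xi$ and using~\eqref{Kbounds} yields
\[
\E(u; B_R(x_0)) \ge \K(u; B_R(x_0)) \ge c \iint_{A \times C} \frac{dx \, dy}{|x - y|^{n + 2s}} \ge c \, \frac{|A| |C|}{R^{n + 2s}} \ge c R^{n - 2s}.
\]
For $s \in (0, 1/2)$ this coincides with $R^{n - 1} \Psi_s(R)$, completing the proof in this regime.

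For $s \in [1/2, 1)$ the crude estimate only gives $R^{n - 2s}$, which is weaker than the target $R^{n - 1} \Psi_s(R)$. To recover the correct scaling I would slice the energy along a fixed direction $e \in \mathbb{S}^{n - 1}$. Writing $u_{\bar x}(t) := u(\bar x + t e)$ and $I_{\bar x} := \{t \in \R : \bar x + t e \in B_R(x_0)\}$ for $\bar x \in e^\perp$, a Fubini-type selection based on the measure bounds on $A$ and $C$ produces a set $\Pi \subset e^\perp$ of $(n - 1)$-Lebesgue measure at least $c R^{n - 1}$ on which both $\{t \in I_{\bar x} : u_{\bar x}(t) > \theta_*\}$ and $\{t \in I_{\bar x} : u_{\bar x}(t) < -\theta_*\}$ have one-dimensional measure at least $c R$. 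Moreover, applying Theorem~\ref{densestthm} to any ball $B_r(z) \subset \{|u| \le \theta_*\}$ produces a contradiction whenever $r \ge R_0$; hence the intermediate set $\{|u| \le \theta_*\}$ contains no ball of radius $\ge R_0$, so on each slice $\bar x \in \Pi$ the transition between the two phases occurs across an interval of one-dimensional length $\lesssim R_0$. Combining a Fourier-based slicing identity for the fractional Gagliardo seminorm (localized to $B_R(x_0)$) with the elementary one-dimensional bound
\[
\int_{I_{\bar x}} \int_{I_{\bar x}} \frac{|u_{\bar x}(t) - u_{\bar x}(r)|^2}{|t - r|^{1 + 2s}} \, dt \, dr \ge c \, \Psi_s(R), \qquad \bar x \in \Pi,
\]
(an elementary integration, yielding a positive constant for $s > 1/2$ and the logarithm $\log R$ for $s = 1/2$, where the UV cutoff $R_0$ is provided by the bounded transition width) and then integrating over $\bar x \in \Pi$ gives $\K(u; B_R(x_0)) \ge c R^{n - 1} \Psi_s(R)$.

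The hard part is the critical case $s = 1/2$: producing the logarithmic factor requires carefully matching the UV scale $R_0$ (from density estimates on sub-balls) against the IR scale $R$, by exploiting the structural form of each good slice (namely, $u_{\bar x} > \theta_*$ on an interval of length $\sim R$, transitioning across a window of length $\lesssim R_0$, and $u_{\bar x} < -\theta_*$ on another interval of length $\sim R$). The case $s > 1/2$ is conceptually simpler, since the one-dimensional slice energy is bounded below by a universal constant independent of $R$, and the $R^{n - 1}$ factor then arises solely from integration over $\Pi$.
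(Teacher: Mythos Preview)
Your treatment of~$s\in(0,1/2)$ is correct and coincides with the paper's argument (Lemma~\ref{enestbelowlem} and Corollary~\ref{enestbelowcor2}): density estimates produce two phases of full measure in~$B_R(x_0)$, and the crude kernel bound yields~$R^{n-2s}$.

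For~$s\ge 1/2$, however, your slicing scheme has two genuine gaps.

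\textbf{First}, the implication ``$\{|u|\le\theta_*\}$ contains no $n$-ball of radius~$\ge R_0$, hence on each slice the transition has length~$\lesssim R_0$'' is false. A thin slab of thickness~$<R_0$ tilted nearly parallel to~$e$ contains no large $n$-ball, yet almost every $e$-slice meets it in an interval of length comparable to~$R$. Without the one-dimensional transition-width control, your slice estimate collapses: from~$|A_{\bar x}|,|C_{\bar x}|\ge cR$ alone one only gets
\[
\int_{A_{\bar x}}\int_{C_{\bar x}}|t-r|^{-1-2s}\,dt\,dr\;\gtrsim\;R^{\,1-2s},
\]
which is~$o(1)$ for~$s>1/2$ and a constant (not~$\log R$) for~$s=1/2$.

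\textbf{Second}, the ``Fourier-based slicing identity localized to~$B_R$'' is not available in this setting. The directional-slicing representation of the Gagliardo seminorm is global and requires averaging over \emph{all} directions~$e\in\mathbb S^{n-1}$; for a single fixed~$e$ the collinear pairs form a null set in~$\R^n\times\R^n$. Moreover, the kernels here satisfy only two-sided bounds by~$|x-y|^{-n-2s}$, so no Fourier identity applies.

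The paper proceeds very differently for~$s\ge 1/2$. When~$s>1/2$ it abandons the kinetic term entirely and bounds the \emph{potential}: Proposition~\ref{intdensprop} (density estimates plus~$C^\alpha$ regularity) gives~$|\{|u|<\theta\}\cap B_R(x_0)|\ge cR^{n-1}$, whence~$\P(u;B_R(x_0))\ge\gamma(\theta)\,cR^{n-1}$. When~$s=1/2$ it runs a genuinely multi-scale argument on~$\K$: at each dyadic scale~$10^\ell$, $\ell=1,\dots,k\sim\log R$, a grid result (Corollary~\ref{densestAcor}) furnishes~$\gtrsim 10^{(k-\ell)(n-1)}$ disjoint cubes of side~$\sim 10^\ell$, each centered at a zero of~$u$; applying the~$s<1/2$ computation inside each cube contributes~$\sim 10^{\ell(n-1)}$, and the contributions at different scales land in disjoint regions of~$B_R\times B_R$ (separated by the distance condition on the pairs~$(E_i^{(\ell)},F_i^{(\ell)})$). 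Summing over~$\ell$ produces the factor~$k\sim\log R$.
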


See also Proposition~\ref{intdensprop} in Section~\ref{densec} for a bound from below on the measure of the interface of non-trivial minimizers. This result has been already
announced\footnote{
We take this opportunity to correct some imprecisions contained in~\cite{CDV17a}.
First of all, in formula~(3.3) the expressions ``$eithern$'' and ``$orn$''
should be replaced by~``either~$n$'' and ``or~$n$'', respectively.
More interestingly, formula~(2.2) has to be replaced by
\begin{gather*}
\left| \{ |u_\varepsilon|<\vartheta_2 \} \cap B_1 \right| \geq c \varepsilon
\\
\mbox{and, if } s\in(1/2, 1), \, \left| \{ |u_\varepsilon|<\vartheta_2\}\cap B_1 \right| \leq C \varepsilon.
\end{gather*}
Note that the lower bound is true for any~$s \in (0, 1)$ and can be easily deduced from Proposition~\ref{intdensprop} by scaling. On the other hand, the condition~$s\in(1/2, 1)$ for the upper bound slipped out of the original formula~(2.2) in~\cite{CDV17a}, and this in fact generates the very interesting
question on whether this upper bound may hold true also
when~$s\in (0, 1/2]$. To the best of our knowledge,
the only known upper bound in such generality is
$$
\left| \{ |u_\varepsilon|<\vartheta_2\}\cap B_1 \right| \leq C \begin{cases}
\varepsilon^{2 s} & \quad \mbox{if } s \in (0, 1/2) \\
\varepsilon |\log \varepsilon| & \quad \mbox{if } s = 1/2,
\end{cases}
$$
which can be obtained from an appropriate rescaling
of the energy estimate of Proposition~\ref{enestprop}, namely Proposition~\ref{epsenestprop} in Section~\ref{PerPLsec}.

Actually, as a result of Theorem 1.1(v) of the recent preprint~\cite{MSW16},
the better bound~$ | \{ |u_\varepsilon| < \vartheta_2 \} \cap B_1 |
\le C_\alpha \varepsilon^{\min \{ 4 s, \alpha \}}$,
for every~$\alpha \in (0, 1)$, holds true when~$ s \in (0, 1/2)$,
for a special class of minimizers and, more in general,
solutions of an Allen-Cahn equation driven by the
fractional Laplace operator of order~$2s$. This result was obtained in~\cite{MSW16}
by means of extension methods which cannot be applied directly to deal
with general integrodifferential kernels.
}
in~\cite{CDV17a} in an equivalent formulation for the minimizers~$u_\varepsilon$ of the rescaled functional~\eqref{Eepsdef}.
\medskip

The remaining part of this paper is organized as follows.

In Section~\ref{auxsec} we gather some technical and auxiliary results that will be used throughout the following sections.

Sections~\ref{densec} and~\ref{ensec} are respectively devoted to the proofs of the density estimates of Theorem~\ref{densestthm} and the energy estimate of Theorem~\ref{enestbelowthm}. An important consequence of the combination of these two results - the so-called~\emph{clean ball condition} - is contained in Proposition~\ref{cleanballprop}, at the end of Section~\ref{ensec}.

In Section~\ref{PLminsec} we address Theorem~\ref{tauPLthm}. As, for a large part, the proof follows closely the one displayed in~\cite[Sections~4 and~5]{CV17}, we only sketch the general argument and focus on the key differences.

Section~\ref{Gammasec} contains the proof of the~$\Gamma$-convergence result stated in Proposition~\ref{Gammaconvprop}.

In the conclusive Section~\ref{PerPLsec} we deal with Theorem~\ref{PerPLthm}, by showing how it can be deduced from Theorem~\ref{epsPLthm}, with the aid of some arguments closely related to Proposition~\ref{Gammaconvprop}.

\section{Some auxiliary results} \label{auxsec}

In this preliminary section we collect a few ancillary results of different nature, that will be needed in the remaining part of the paper.

We begin with a couple of lemmata, containing standard pointwise estimates on the integral operator~$\L_K$ defined in~\eqref{LKdef}. For~$s < 1/2$, we have

\begin{lemma} \label{LKlems<}
Let~$n \ge 1$ and~$s \in (0, 1/2)$. Assume that~$K$ satisfies~\eqref{Kbounds}. Then, given~$x \in \R^n$,~$\rho > 0$ and~$\psi \in L^\infty(\R^n) \cap C^{0, 1}(B_\rho(x))$, it holds
\begin{equation} \label{LKests<}
|\L_K \psi(x)| \le C \Lambda \left( \| \psi \|_{L^\infty(\R^n)} \rho^{- 2 s} + \| \nabla \psi \|_{L^\infty(B_\rho(x))} \rho^{1 - 2 s} \right),
\end{equation}
for some constant~$C > 0$ which depends on~$n$ and~$s$.
\end{lemma}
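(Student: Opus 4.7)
The proof plan is to decompose the singular integral defining $\L_K \psi(x)$ into a near-diagonal contribution and a far-field contribution, and to estimate each piece separately using the upper bound in hypothesis~\eqref{Kbounds}.

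First I would write
\[
\L_K \psi(x) = \lim_{\varepsilon \to 0^+} \left( \int_{B_\rho(x) \setminus B_\varepsilon(x)} + \int_{\R^n \setminus B_\rho(x)} \right) (\psi(x) - \psi(y)) \, K(x, y) \, dy,
\]
and observe that, since $s < 1/2$, the principal value is in fact not needed: the near integrand is integrable. Indeed, the Lipschitz regularity of $\psi$ on $B_\rho(x)$ yields $|\psi(x) - \psi(y)| \le \|\nabla \psi\|_{L^\infty(B_\rho(x))} \, |x - y|$ for $y \in B_\rho(x)$, and combined with $K(x, y) \le \Lambda |x-y|^{-n-2s}$ from~\eqref{Kbounds}, the integrand is controlled by a constant times $|x-y|^{1-n-2s}$, whose exponent is $> -n$ precisely because $s < 1/2$. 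Passing to polar coordinates and integrating $\int_0^\rho r^{-2s} \, dr = \rho^{1 - 2s}/(1 - 2s)$ then gives the bound
\[
\int_{B_\rho(x)} |\psi(x) - \psi(y)| \, K(x, y) \, dy \le C \Lambda \, \|\nabla \psi\|_{L^\infty(B_\rho(x))} \, \rho^{1 - 2s}
\]
for some $C = C(n, s)$.

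For the far-field contribution, I would simply use $|\psi(x) - \psi(y)| \le 2 \|\psi\|_{L^\infty(\R^n)}$ together with~\eqref{Kbounds}, and compute
\[
\int_{\R^n \setminus B_\rho(x)} K(x, y) \, dy \le \Lambda \int_{\R^n \setminus B_\rho(x)} \frac{dy}{|x - y|^{n + 2s}} = C' \Lambda \, \rho^{- 2s},
\]
with $C' = C'(n, s)$. Adding the two contributions and taking the triangle inequality yields the desired estimate~\eqref{LKests<} with a suitable universal constant depending only on $n$ and $s$.

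There is no real obstacle: the only delicate point is the integrability near the diagonal, which works precisely because $1 - 2s > 0$. No symmetrization or second-order cancellation is required here, in contrast with the companion lemma for $s \ge 1/2$ (where~\eqref{Kreg} will be essential to handle the stronger singularity).
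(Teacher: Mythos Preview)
Your proof is correct and follows essentially the same approach as the paper: split into the near contribution over $B_\rho(x)$, controlled via the Lipschitz bound and the integrability of $|x-y|^{1-n-2s}$ (valid since $s<1/2$), and the far contribution over $\R^n\setminus B_\rho(x)$, controlled via the $L^\infty$ bound and the tail integrability of the kernel. Your remark that the principal value is actually unnecessary here is a correct and useful addition.
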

\begin{proof}
We split the integral defining~$\L_K \psi(x)$ as
$$
\L_K \psi(x) = I_1 + I_2,
$$
where
\begin{align*}
I_1 & := \int_{\R^n \setminus B_\rho(x)} \left( \psi(x) - \psi(y) \right) K(x, y) \, dy \\
I_2 & := \int_{B_\rho(x)} \left( \psi(x) - \psi(y) \right) K(x, y) \, dy.
\end{align*}
Applying~\eqref{Kbounds}, we first compute
$$
\left| I_1 \right| \le 2 \Lambda \| \psi \|_{L^\infty(\R^n)} \int_{\R^n \setminus B_\rho(x)} |x - y|^{- n - 2 s} \, dy = \frac{n |B_1| \Lambda \| \psi \|_{L^\infty(R^n)} \rho^{- 2 s}}{s}.
$$
To control~$I_2$ we use the Lipschitzianity of~$\psi$ together with~\eqref{Kbounds} again to get
$$
\left| I_2 \right| \le \Lambda \| \nabla \psi \|_{L^\infty(B_\rho(x))} \int_{B_\rho(x)} |x - y|^{1 - n - 2 s} \, dy = \frac{n |B_1| \Lambda \| \nabla \psi \|_{L^\infty(B_\rho(x))} \rho^{1 - 2 s}}{1 - 2 s}.
$$
These two estimates combined lead to~\eqref{LKests<}.
\end{proof}

In the general case of~$s \in (0, 1)$ - and, most significantly, when~$s \ge 1/2$ - a similar statement holds, as long as we add the regularity assumption~\eqref{Kreg} on the kernel~$K$.

\begin{lemma} \label{LKlemsge}
Let~$n \ge 1$ and~$s \in (0, 1)$. Assume that~$K$ satisfies~\eqref{Kbounds} and~\eqref{Kreg}. Then, given~$x \in \R^n$,~$\rho > 0$ and~$\psi \in L^\infty(\R^n) \cap C^{1, 1}(B_\rho(x))$, it holds
\begin{equation} \label{LKestsge}
|\L_K \psi(x)| \le C \left[ \Lambda \left( \| \psi \|_{L^\infty(\R^n)} \rho^{- 2 s} + \| \nabla^2 \psi \|_{L^\infty(B_\rho(x))} \rho^{2 (1 - s)} \right) + \Gamma |\nabla \psi(x)| \rho^\nu \right],
\end{equation}
for some constant~$C > 0$ which depends on~$n$,~$s$ and~$\nu$.
\end{lemma}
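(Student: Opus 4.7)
The plan is to mimic the decomposition used in the proof of Lemma~\ref{LKlems<}, splitting $\L_K\psi(x) = I_1 + I_2$ where $I_1$ is the integral on $\R^n \setminus B_\rho(x)$ and $I_2$ is the principal value on $B_\rho(x)$. The bound on $I_1$ is unchanged: the pointwise upper bound in~\eqref{Kbounds} together with $|\psi(x) - \psi(y)| \le 2\|\psi\|_{L^\infty(\R^n)}$ immediately yields $|I_1| \le C\Lambda \|\psi\|_{L^\infty(\R^n)} \rho^{-2s}$. The new difficulty is $I_2$: when $s \ge 1/2$ the Lipschitz bound $|\psi(x) - \psi(x+w)| \le \|\nabla\psi\|_{L^\infty(B_\rho)}|w|$ used for the case $s < 1/2$ no longer produces an integrable integrand, so one must cancel the first-order part using the (weak) symmetry provided by~\eqref{Kreg}.

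Concretely, I would change variables $y = x + w$ and write, via a first-order Taylor expansion,
\[
\psi(x+w) - \psi(x) = \nabla\psi(x)\cdot w + R(w), \qquad |R(w)| \le \tfrac{1}{2}\|\nabla^2\psi\|_{L^\infty(B_\rho(x))}|w|^2,
\]
so that
\[
I_2 = - \nabla\psi(x) \cdot \PV\!\!\int_{B_\rho} w\, K(x,x+w)\, dw \;-\; \int_{B_\rho} R(w)\, K(x,x+w)\, dw.
\]
The remainder piece is bounded by Fubini using~\eqref{Kbounds}: the integrand is dominated by $\tfrac{\Lambda}{2}\|\nabla^2\psi\|_{L^\infty(B_\rho(x))}|w|^{2-n-2s}$, which is integrable on $B_\rho$ since $s < 1$, yielding the desired contribution $C\Lambda \|\nabla^2\psi\|_{L^\infty(B_\rho(x))}\rho^{2(1-s)}$.

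The main obstacle, and the step where~\eqref{Kreg} enters decisively, is the principal-value integral $J := \PV\!\int_{B_\rho} w\,K(x,x+w)\,dw$. Here I symmetrize by replacing $w$ with $-w$: this shows
\[
J \;=\; -\PV\!\!\int_{B_\rho} w\, K(x, x - w)\, dw,
\]
and averaging the two expressions gives
\[
J \;=\; \frac{1}{2}\int_{B_\rho} w\bigl[K(x,x+w) - K(x,x-w)\bigr] dw.
\]
The integrand is now genuinely integrable (no PV needed) thanks to~\eqref{Kreg}, which yields the pointwise bound $|w|\,|K(x,x+w) - K(x,x-w)| \le \Gamma|w|^{-n+\nu}$. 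Integrating on $B_\rho$ in polar coordinates produces $|J| \le C\Gamma \rho^\nu$, so the first-order piece contributes at most $C\Gamma|\nabla\psi(x)|\rho^\nu$. Combining the three estimates gives~\eqref{LKestsge}, with $C$ depending on $n$, $s$ and $\nu$ through the factors $\frac{1}{s}$, $\frac{1}{1-s}$, $\frac{1}{\nu}$ arising from the radial integrations.
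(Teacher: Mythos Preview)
Your proposal is correct and follows essentially the same approach as the paper: both split at radius~$\rho$, bound the far part via~\eqref{Kbounds} and $\|\psi\|_{L^\infty}$, and inside $B_\rho(x)$ use a first-order Taylor expansion to separate a second-order remainder (controlled by~\eqref{Kbounds} and $\|\nabla^2\psi\|_{L^\infty}$) from the linear principal-value term, which is then handled via~\eqref{Kreg}. The only cosmetic difference is in the symmetrization of the linear term: the paper splits $B_\rho(x)\setminus B_\varepsilon(x)$ into the half-annuli $\{\nabla\psi(x)\cdot(y-x)\gtrless 0\}$ and applies the reflection $z=2x-y$ to one half, whereas you average the integral with its image under $w\mapsto -w$ on the whole ball; both manipulations produce the same integrand $w\,[K(x,x+w)-K(x,x-w)]$ and the same bound.
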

\begin{proof}
We have
\begin{align*}
|\L_K \psi(x)| & \le \int_{\R^n \setminus B_\rho(x)} |\psi(x) - \psi(y)| K(x, y) \, dy \\
& \quad + \int_{B_\rho(x)} |\psi(x) - \psi(y) + \nabla \psi(x) \cdot (y - x)| K(x, y) \, dy \\
& \quad + \left| \PV \int_{B_\rho(x)} \nabla \psi(x) \cdot (y - x) K(x, y) \, dy \right| \\
& =: I_1 + I_2 + I_3.
\end{align*}
The term~$I_1$ can be estimated exactly as in Lemma~\ref{LKlems<} to deduce
\begin{equation} \label{I1}
I_1 \le \frac{n |B_1| \Lambda \| \psi \|_{L^\infty(\R^n)} \rho^{- 2 s}}{s}.
\end{equation}
On the other hand, the regularity of~$\psi$ and again~\eqref{Kbounds} imply that
\begin{equation} \label{I2}
I_2 \le \Lambda \| \nabla^2 \psi \|_{L^\infty(B_\rho(x))} \int_{B_\rho(x)} |x - y|^{2 - n - 2 s} \, dy = \frac{n |B_1| \Lambda \| \nabla^2 \psi \|_{L^\infty(B_\rho(x))} \rho^{2 (1 - s)}}{2 (1 - s)}.
\end{equation}
Finally, we claim that
\begin{equation} \label{I3}
I_3 \le \frac{n |B_1| \Gamma |\nabla \psi(x)| \rho^\nu}{\nu}.
\end{equation}
The proof of~\eqref{I3} is a little bit more involved. Of course, we may assume that~$\nabla \psi(x) \ne 0$, as if the contrary is true, then~\eqref{I3} follows trivially. Write
$$
\PV \int_{B_\rho(x)} \nabla \psi(x) \cdot (y - x) K(x, y) \, dy = \lim_{\varepsilon \rightarrow 0^+} \int_{B_\rho(x) \setminus B_\varepsilon(x)} \nabla \psi(x) \cdot (y - x) K(x, y) \, dy,
$$
and consider the half-annuli
\begin{align*}
A_\varepsilon^+ & := \left\{ y \in B_\rho(x) \setminus B_\varepsilon(x) : \nabla \psi(x) \cdot (y - x) \ge 0 \right\} \\
A_\varepsilon^- & := \left( B_\rho(x) \setminus B_\varepsilon(x) \right) \setminus A_\varepsilon^+,
\end{align*}
for any small~$\varepsilon > 0$. Then, 
\begin{align*}
\int_{B_\rho(x) \setminus B_\varepsilon(x)} & \nabla \psi(x) \cdot (y - x) K(x, y) \, dy \\
& = \int_{A_\varepsilon^+} \nabla \psi(x) \cdot (y - x) K(x, y) \, dy + \int_{A_\varepsilon^-} \nabla \psi(x) \cdot (y - x)  K(x, y) \, dy \\
& = \int_{A_\varepsilon^+} \nabla \psi(x) \cdot (y - x) K(x, y) \, dy + \int_{A_\varepsilon^+} \nabla \psi(x) \cdot (x - z) K(x, 2 x - z) \, dz \\
& = \int_{A_\varepsilon^+} \nabla \psi(x) \cdot (y - x) \left[ K(x, y) - K(x, 2 x - y) \right] dy,
\end{align*}
where we applied the change of variables~$z := 2 x - y$ to the integral over~$A_\varepsilon^-$ and we noticed that this map is a diffeomorphism of~$A_\varepsilon^-$ onto~$A_\varepsilon^+$. Consequently, by virtue of~\eqref{Kreg} we obtain
\begin{align*}
\left| \int_{B_\rho(x) \setminus B_\varepsilon(x)} \nabla \psi(x) \cdot (y - x) K(x, y) \, dy \right| & \le |\nabla \psi(x)| \int_{A_\varepsilon^+} |y - x| \left| K(x, y) - K(x, 2 x - y) \right| dy \\
& \le \Gamma |\nabla \psi(x)| \int_{B_\rho(x)} |y - x|^{- n + \nu} \, dy \\
& = \frac{n |B_1| \Gamma |\nabla \psi(x)| \rho^{\nu}}{\nu},
\end{align*}
and thus~\eqref{I3}. Formula~\eqref{LKestsge} then follows from~\eqref{I1},~\eqref{I2} and~\eqref{I3}.
\end{proof}

Next, we state a measure theoretic result that assesses the size of the boundary of~\emph{dense} sets via a cubic grid decomposition. The proposition is based on the relative isoperimetric inequality and should probably be well-known to the experts in some equivalent form. However, as we have not been able to find a satisfactory reference in the literature, we present a proof of it in full details.

\begin{proposition} \label{densestAprop}
Let~$Q_r \subset \R^n$ be a closed cube of sides~$r > 0$ and~$A$ be an open subset of~$\R^n$. Suppose that there exists a constant~$c_\sharp \in (0, 1)$ such that
\begin{equation} \label{densestA}
\min \Big\{ \left| A \cap Q_r \right|, \left| Q_r \setminus \overline{A} \right| \Big\} \ge c_\sharp r^n.
\end{equation}
For any~$k \in \N$, let~$\PP$ be the non-overlapping (up to negligible sets) partition of~$Q_r$ in~$k^n$ closed cubes with sides of length~$r / k$, parallel to those of~$Q_r$. Then,
$$
\card \left( \Big\{ Q \in \PP : Q \cap \partial A \ne \varnothing \Big\} \right) \ge c_\star k^{n - 1},
$$
for some constant~$c_\star \in (0, 1)$ which depends only on~$n$ and~$c_\sharp$.
\end{proposition}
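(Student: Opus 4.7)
The plan is to combine a coloring of the partition cubes with the relative isoperimetric inequality on~$Q_r$. Since $A$ is open, the decomposition $\R^n \setminus \partial A = A \sqcup (\R^n \setminus \overline{A})$ is a disjoint union of two open sets, so any connected closed cube $Q \in \PP$ disjoint from $\partial A$ must lie entirely in exactly one of them. Accordingly, I would partition $\PP$ into three subfamilies
\[
\PP_A := \{ Q \in \PP : Q \subseteq A \}, \qquad \PP_B := \{ Q \in \PP : Q \subseteq \R^n \setminus \overline{A} \}, \qquad \PP_0 := \{ Q \in \PP : Q \cap \partial A \ne \varnothing \},
\]
so that the goal becomes bounding $\card(\PP_0)$ from below.

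Since cubes of $\PP_B$ contribute nothing to $|A \cap Q_r|$, the density hypothesis~\eqref{densestA} gives $c_\sharp r^n \leq (\card(\PP_A) + \card(\PP_0))(r/k)^n$, i.e.\ $\card(\PP_A) + \card(\PP_0) \geq c_\sharp k^n$, and symmetrically $\card(\PP_B) + \card(\PP_0) \geq c_\sharp k^n$. If $\card(\PP_0) \geq c_\sharp k^n/2$ the conclusion is immediate; otherwise both $\card(\PP_A)$ and $\card(\PP_B)$ exceed $c_\sharp k^n/2$. In this nontrivial case, the polyhedral set $U_A := \bigcup_{Q \in \PP_A} Q$ satisfies $|U_A| \geq c_\sharp r^n/2$ and $|Q_r \setminus U_A| \geq |U_B| \geq c_\sharp r^n/2$, has finite perimeter, and the (scale-invariant) relative isoperimetric inequality on the cube $Q_r$ yields
\[
\Per(U_A; \mathrm{int}(Q_r)) \ge C(n) \min(|U_A|, |Q_r \setminus U_A|)^{(n-1)/n} \ge C'(n, c_\sharp) \, r^{n-1}.
\]

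The concluding geometric observation is that $\Per(U_A; \mathrm{int}(Q_r))$ coincides with the total $\Haus^{n-1}$-measure of those $(n-1)$-faces in $\mathrm{int}(Q_r)$ shared between a $\PP_A$-cube and a cube outside $\PP_A$. Crucially, no $\PP_A$-cube can share a face with a $\PP_B$-cube: such a shared face would lie simultaneously in $A$ (through the $\PP_A$-cube) and in $\R^n \setminus \overline{A}$ (through the $\PP_B$-cube), but these two sets are disjoint. Hence every contributing face is shared between a $\PP_A$- and a $\PP_0$-cube; as each such face has $\Haus^{n-1}$-measure $(r/k)^{n-1}$ and each cube of $\PP_0$ possesses at most $2n$ faces, a double counting gives
\[
\card(\PP_0) \ge \frac{\Per(U_A; \mathrm{int}(Q_r)) \, k^{n-1}}{2n \, r^{n-1}} \ge c_\star k^{n-1},
\]
with $c_\star$ depending only on~$n$ and~$c_\sharp$. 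The only points requiring care are the scale invariance of the isoperimetric constant on cubes (achieved by a straightforward rescaling argument) and the identification of $\partial U_A$ in $\mathrm{int}(Q_r)$ with the described set of shared faces, which is transparent since $U_A$ is a finite union of closed cubes.
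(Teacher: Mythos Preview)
Your argument is correct and follows the same overall skeleton as the paper's proof: the same three-color partition of~$\PP$, the same reduction via the density hypothesis to the case where both ``pure'' families are large, the relative isoperimetric inequality on the cube, and the same key observation that a cube contained in~$A$ cannot be adjacent to one contained in~$\R^n \setminus \overline{A}$, so that the interior perimeter is entirely accounted for by faces of~$\PP_0$-cubes.

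The one genuine difference is that the paper first decomposes~$\YY$ (your~$\PP_A$) and~$\RR$ (your~$\PP_B$) into connected clusters~$Y_j$,~$R_j$, applies the relative isoperimetric inequality to each cluster separately, and then needs an auxiliary argument to pick the family in which every cluster occupies at most half of~$Q_r$ (so that the minimum in the isoperimetric inequality falls the right way), followed by the counting estimate~$\sum_j \card(\YY_j)^{(n-1)/n} \ge k^{-1} \card(\YY)$. You bypass all of this by applying the isoperimetric inequality once to the whole set~$U_A$, which is legitimate because in the nontrivial case both~$|U_A|$ and~$|Q_r \setminus U_A| \ge |U_B|$ are at least~$c_\sharp r^n / 2$. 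This makes your version shorter and avoids the clustering detour entirely; the paper's clustering yields no additional information here and your streamlining is a net gain.
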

\begin{proof}
Define the following subclasses of~$\PP$:
\begin{align*}
\YY & := \left\{ Q \in \PP : Q \subseteq A \right\}, \\
\RR & := \left\{ Q \in \PP : Q \subseteq Q_r \setminus \overline{A} \right\}, \\
\GG & := \left\{ Q \in \PP : Q \cap \partial A \ne \varnothing \right\} = \PP \setminus \left( \YY \cup \RR \right).
\end{align*}
With this notation, we need to show that~$\card(\GG) \ge c_\star k^{n - 1}$.

To do this, we first divide~$\YY$ into connected clusters of adjacent cubes, i.e. we write
$$\YY = \bigcup_{j = 1}^{N_\YY} \YY_j,
$$
where each~$\YY_j$ is made up of adjacent cubes. Analogously, we write
$$
\RR = \bigcup_{j = 1}^{N_\RR} \RR_j.
$$
Of course,~$\card(\YY_j), \card(\RR_j) \le k^n$, for any~$j$. Moreover, we adopt the notation
$$
Y_j := \bigcup_{Q \in \YY_j} Q, \quad Y := \bigcup_{j = 1}^{N_\YY} Y_j, \quad R_j := \bigcup_{Q \in \RR_j} Q, \quad R := \bigcup_{j = 1}^{N_\RR} R_j \quad \mbox{and} \quad G := \bigcup_{Q \in \GG} Q,
$$
for the subsets of~$Q_r$ corresponding to the families~$\YY_j$,~$\YY$,~$\RR_j$,~$\RR$ and~$\GG$. In view of~\eqref{densestA},
$$
\Big( \card(\YY) + \card(\GG) \Big) \left( \frac{r}{k} \right)^n = \sum_{Q \in \YY \cup \GG} |Q| \ge |A \cap Q_r| \ge c_\sharp r^n.
$$
Hence,
$$
\mbox{either } \card(\YY) \ge \frac{c_\sharp}{2} k^n, \mbox{ or } \card(\GG) \ge \frac{c_\sharp}{2} k^n.
$$
With the same argument we obtain that
$$
\mbox{either } \card(\RR) \ge \frac{c_\sharp}{2} k^n, \mbox{ or } \card(\GG) \ge \frac{c_\sharp}{2} k^n.
$$
Now, if the bound for~$\card(\GG)$ is true, we are done. Thus, we assume the other two options to hold, so that
\begin{equation} \label{cardYj}
\sum_{j = 1}^{N_\YY} \card(\YY_j)^{(n - 1) / n} = \sum_{j = 1}^{N_\YY} \frac{\card(\YY_j)}{\card(\YY_j)^{1/n}} \ge \frac{1}{k} \sum_{j = 1}^{N_\YY} \card(\YY_j) = \frac{\card(\YY)}{k} \ge \frac{c_\sharp}{2} k^{n - 1},
\end{equation}
%and similarly
%$$
%\sum_{j = 1}^{N_\RR} \left( \card(\RR_j) \right)^{(n - 1) / n} \ge \frac{c_\sharp}{2} k^{n - 1}.
%$$
and analogously for the~$\RR_j$'s. But then, by the relative 
isoperimetric inequality in cubes (see e.g.~\cite[Corollary~5.9.13]{P12}),
\begin{equation} \label{perYj}
\mbox{Per} \left( Y_j, \accentset{\circ}{Q}_r \right) \ge c_1 \min \left\{ |Y_j|, |\accentset{\circ}{Q}_r \setminus Y_j| \right\}^{(n - 1) / n},
\end{equation}
for any~$j = 1, \ldots, N_\YY$ and for some dimensional constant~$c_1 \in (0, 1)$. Similarly for the~$R_j$'s. Notice now that
$$
\mbox{either } |Y_j| \le |\accentset{\circ}{Q}_r \setminus Y_j| \mbox{ for any } j = 1, \ldots, N_\YY, \mbox{ or } |R_j| \le |\accentset{\circ}{Q}_r \setminus R_j| \mbox{ for any } j = 1, \ldots, N_\RR.
$$
We assume, without loss of generality, that the above property holds for the~$Y_j$'s. By using~\eqref{cardYj} and~\eqref{perYj}, we get
\begin{align*}
\mbox{Per} \left( Y, \accentset{\circ}{Q}_r \right) & = \sum_{j = 1}^{N_\YY} \mbox{Per} \left( Y_j, \accentset{\circ}{Q}_r \right) \ge c_1 \sum_{j = 1}^{N_\YY} |Y_j|^{(n - 1) / n} \\
& = c_1 \left( \frac{r}{k} \right)^{n - 1} \sum_{j = 1}^{N_\YY} \card(\YY_j)^{(n - 1) / n} \ge c_2 r^{n - 1},
\end{align*}
for some~$c_2 \in (0, 1)$ depending only on~$n$ and~$c_\sharp$. But then, the~$Y_j$'s may only confine with the set~$G$. Hence,~$\mbox{Per}(G, \accentset{\circ}{Q}_r) \ge c_2 r^{n - 1}$ and thus
$$
2 n \left( \frac{r}{k} \right)^{n - 1} \card(\GG) = \sum_{Q \in \GG} \mbox{Per}(Q) \ge \mbox{Per}(G, \accentset{\circ}{Q}_r) \ge c_2 r^{n - 1},
$$
from which the thesis follows.
\end{proof}

The above result may be further sharpened as follows. In this enhanced form, such estimation will be used later in Section~\ref{ensec}.

\begin{corollary} \label{densestAcor}
Let~$Q_r \subset \R^n$ be a closed cube of sides~$r > 0$ and~$A$ be an open subset of~$\R^n$ for which~\eqref{densestA} holds true, for some~$c_\sharp \in (0, 1)$. Then, for any~$k \in \N$, there exists a collection~$\QQ = \{ Q^{(j)} \}_{j = 1}^N$ of non-overlapping closed cubes with sides of length~$r / k$, parallel to those of~$Q_r$, each~$Q^{(j)}$ contained in~$\accentset{\circ}{Q}_{2 r}$ and centered at some point
$$
x_j \in Q_r \cap \partial A.
$$
The cardinality~$N$ of the family can be chosen to satisfy
$$
N \ge c_{\star \star} k^{n - 1},
$$
for some constant~$c_{\star \star} \in (0, 1)$ depending only on~$n$ and~$c_\sharp$.
\end{corollary}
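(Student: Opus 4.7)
The plan is to deduce the corollary directly from Proposition~\ref{densestAprop} together with a coloring argument. First I apply Proposition~\ref{densestAprop} to $Q_r$ with the given parameter $k$, obtaining a sub-collection $\GG \subseteq \PP$ of partition cubes of side $r/k$ with $\card(\GG) \geq c_\star k^{n-1}$ and $Q \cap \partial A \neq \varnothing$ for every $Q \in \GG$. For each such $Q$, I pick a point $x_Q \in Q \cap \partial A$ and let $Q^{(j)}$ denote the closed cube of side $r/k$ centered at $x_Q$, with sides parallel to those of $Q_r$.

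Since $x_Q \in Q \subseteq Q_r$, the cube $Q^{(j)}$ lies within $L^\infty$-distance $r/2 + r/(2k)$ of the center of $Q_r$, which is strictly less than $r$ for $k \geq 2$; hence $Q^{(j)} \subset \accentset{\circ}{Q}_{2r}$ in this case. The case $k = 1$ reduces to exhibiting a single cube, which requires a center $x_1 \in \partial A \cap \accentset{\circ}{Q}_r$: if no such point existed, the open sets $A \cap \accentset{\circ}{Q}_r$ and $\accentset{\circ}{Q}_r \setminus \overline{A}$ would form a disconnection of $\accentset{\circ}{Q}_r$ into two nonempty pieces (both are nonempty by~\eqref{densestA}), which is impossible.

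The main obstacle is to extract from $\{Q^{(j)}\}_{Q \in \GG}$ a non-overlapping subfamily of cardinality at least $c_{\star\star} k^{n-1}$. Two such cubes have disjoint interiors if and only if $\|x_Q - x_{Q'}\|_\infty \geq r/k$, so I must ensure sufficient separation between the selected centers. I achieve this via a coloring: label each $Q \in \PP$ by its grid multi-index $I \in \{0, \ldots, k-1\}^n$ and color it by $I \bmod 3 \in \{0, 1, 2\}^n$. There are $3^n$ colors, so by pigeonhole some color class contains at least $c_\star k^{n-1}/3^n$ elements of $\GG$. For any two distinct cubes $Q_I, Q_{I'}$ sharing a color there is a coordinate $p$ with $|i_p - i'_p| \geq 3$, which forces $|(x_Q)_p - (x_{Q'})_p| \geq 2 r/k > r/k$ for any choice of centers; hence the corresponding subfamily of $Q^{(j)}$'s is non-overlapping. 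Setting $c_{\star\star} := \min\{c_\star/3^n, 1\}$ concludes the proof.
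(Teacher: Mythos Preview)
Your proof is correct and follows essentially the same approach as the paper: apply Proposition~\ref{densestAprop}, recenter each selected partition cube at a point of~$\partial A$, and then thin the family by a factor~$3^n$ to make it non-overlapping. You simply spell out the~$3^n$ reduction explicitly via the mod-$3$ coloring (the paper asserts this step without detail) and treat the containment in~$\accentset{\circ}{Q}_{2r}$ and the case~$k=1$ more carefully than the paper does.
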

\begin{proof}
Given~$k \in \N$, divide~$Q_r$ into the non-overlapping partition~$\PP$ described in Proposition~\ref{densestAprop}. By virtue of Proposition~\ref{densestAprop} itself, we already know that the number of cubes in this partition that have non-trivial intersection with~$\partial A$ is at least~$c_\star k^{n - 1}$, for some constant~$c_\star \in (0, 1)$ which depends on~$n$ and~$c_\sharp$. Denote these cubes by~$\widetilde{Q}^{(j)}$, for~$j = 1, \ldots, \tilde{N}$, with~$\tilde{N} \ge c_\star k^{n - 1}$, and let~$x_j \in \widetilde{Q}^{(j)} \cap \partial A$ be the intersection points. We now translate each~$\widetilde{Q}^{(j)}$ to obtain a new cube~$Q^{(j)}$ centered at~$x_j$. Notice that, by reducing the total number of the cubes~$Q^{(j)}$'s to~$N := \tilde{N} / 3^n$, we may assume the new family to be non-overlapping. This concludes the proof.
\end{proof}

We conclude the section with the following simple lemma on the~$L^1$ convergence of the superlevel sets of a particular class of pointwise converging sequences of functions.

\begin{lemma} \label{diffsimmlimlem}
Let~$\Omega$ be an open bounded subset of~$\R^n$. Let~$\{ u_j \}$ be a sequence of measurable functions~$u_j: \Omega \to \R$ converging a.e.~in~$\Omega$ to~$u = \chi_E - \chi_{\Omega \setminus E}$, for some measurable set~$E \subseteq \Omega$. Then, given any~$\eta \in (-1, 1)$,
$$
\lim_{j \rightarrow +\infty} \left| E \Delta \{ u_j > \eta \} \right| = 0.
$$
\end{lemma}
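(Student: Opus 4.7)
The plan is to express the measure of the symmetric difference as an integral of the difference of the corresponding indicator functions, and then apply the dominated convergence theorem after observing pointwise convergence.

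First I would note that, by hypothesis, the limit $u$ takes only the values $+1$ on $E$ and $-1$ on $\Omega \setminus E$. Fix $\eta \in (-1,1)$. I claim that $\chi_{\{u_j > \eta\}}$ converges pointwise a.e.\ in $\Omega$ to $\chi_E$. Indeed, let $x \in \Omega$ be a point at which $u_j(x) \to u(x)$. If $x \in E$, then $u_j(x) \to 1 > \eta$, so $u_j(x) > \eta$ for all $j$ sufficiently large, whence $\chi_{\{u_j > \eta\}}(x) \to 1 = \chi_E(x)$. If instead $x \in \Omega \setminus E$, then $u_j(x) \to -1 < \eta$, so $u_j(x) \le \eta$ eventually, giving $\chi_{\{u_j > \eta\}}(x) \to 0 = \chi_E(x)$. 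Since the set of $x$ where $u_j(x) \not\to u(x)$ has measure zero, the claimed pointwise convergence holds a.e.\ in $\Omega$.

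Next, observe that since both $E$ and $\{u_j > \eta\}$ are subsets of $\Omega$, one has
$$
\left| E \Delta \{u_j > \eta\} \right| = \int_\Omega \left| \chi_E(x) - \chi_{\{u_j > \eta\}}(x) \right| dx.
$$
The integrand is dominated by the constant function $1$, which is integrable over $\Omega$ since $|\Omega| < +\infty$ by assumption. Applying Lebesgue's dominated convergence theorem together with the a.e.\ pointwise convergence established above yields
$$
\lim_{j \to +\infty} \int_\Omega \left| \chi_E(x) - \chi_{\{u_j > \eta\}}(x) \right| dx = 0,
$$
which is the desired conclusion. No serious obstacle is expected; the only mildly delicate point is checking that the strict inequality $\eta < 1$ (resp.\ $\eta > -1$) genuinely forces the pointwise convergence of the indicator on $E$ (resp.\ on $\Omega \setminus E$), which is why the statement excludes the endpoint values $\eta = \pm 1$.
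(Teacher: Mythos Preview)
Your proof is correct and follows essentially the same approach as the paper: establish the a.e.\ pointwise convergence of $\chi_{\{u_j > \eta\}}$ to $\chi_E$ by splitting into the cases $x \in E$ and $x \in \Omega \setminus E$, then write $|E \Delta \{u_j > \eta\}|$ as the $L^1$ norm of the difference of indicators and apply dominated convergence using the boundedness of $\Omega$. The paper's argument is virtually identical, and your remark on why the endpoints $\eta = \pm 1$ are excluded is a nice addition.
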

\begin{proof}
First, note that
\begin{equation} \label{chitochi}
\chi_{ \{ u_j > \eta \} } \longrightarrow \chi_E \quad \mbox{a.e.~in } \Omega.
\end{equation}
Indeed, for a.e.~$x \in E$, we have~$u_j(x) \rightarrow u(x) = \chi_E(x) - \chi_{\Omega \setminus E}(x) = 1$. Hence,~$u_j(x) > \eta$ for any large enough~$j$, i.e.~$\chi_{ \{ u_j > \eta \} }(x) = 1 = \chi_E(x)$. Analogously, one checks that, for a.a.~$x \in \Omega \setminus E$, it holds~$\chi_{ \{ u_j > \eta \} } = 0 = \chi_E(x)$, for~$j$ sufficiently large. Then,~\eqref{chitochi} holds true.

To conclude the proof of the lemma, we simply apply formula~\eqref{chitochi} in combination with Lebesgue's dominated convergence theorem. We get
\begin{equation*}
\lim_{j \rightarrow +\infty} \left| E \Delta \{ u_j > \eta \} \right| = \lim_{j \rightarrow +\infty} \int_\Omega \left| \chi_E(x) - \chi_{ \{ u_j > \eta \}}(x) \right| \, dx = 0.\qedhere
\end{equation*}
\end{proof}

\section{Density estimates. Proof of Theorem~\ref{densestthm}} \label{densec}

In this section we establish the density estimates for the minimizers of~$\E$, thus proving Theorem~\ref{densestthm}. The argument follows the lines of that developed in~\cite{SV11,SV14}. Here we only sketch the general strategy and outline the main differences in our version.

As a first step, we construct the following barrier, that was also considered in~\cite{CP16}.

\begin{lemma} \label{barrierlem}
Let~$n \ge 1$,~$s \in (0, 1)$ and assume that~$K$ satisfies~\eqref{Ksymmetry} and~\eqref{Kbounds}. Given any~$\delta > 0$ there exists a constant~$C \ge 1$, depending on~$\delta$ and on universal quantities, such that for any~$C \le R$ we can construct a symmetric radially non-decreasing function
$$
w \in C^{1, 1}\left( \R^n, \left[ -1 + C^{-1} R^{- 2 s}, 1 \right] \right),
$$
with
$$
w = 1 \quad \mbox{in } \R^n \setminus B_R,
$$
which satisfies
\begin{equation} \label{LKwbar}
\left| \L_K w(x) \right| \le \delta \left( 1 + w(x) \right),
\end{equation}
and
\begin{equation} \label{1+wbarest}
\frac{1}{C} \left( R + 1 - |x| \right)^{- 2 s} \le 1 + w(x) \le C \left( R + 1 - |x| \right)^{- 2 s},
\end{equation}
for any~$x \in B_R$.
\end{lemma}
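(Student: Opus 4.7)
The plan is to construct $w$ as a radial, non-decreasing function with an explicit analytic profile, verify~\eqref{1+wbarest} directly, and then estimate $\L_K w$ by decomposing the integration domain according to the distance from $x$ and from the boundary of $B_R$. I would set $w(x) := \phi(|x|)$, with $\phi \in C^{1,1}([0,\infty),[-1,1])$ non-decreasing, $\phi \equiv 1$ on $[R,\infty)$, and $1+\phi(r) = \alpha(R+1-r)^{-2s}$ on a large subinterval $[0,R-\eta]$, with $\eta\in(0,1]$ fixed and $\alpha \in (0,2]$ chosen so that the function interpolates monotonically to $1$ on the thin layer $[R-\eta,R]$. The $C^{1,1}$ regularity is guaranteed via a standard mollification of the resulting profile, and~\eqref{1+wbarest} is immediate from this construction: the lower bound at the center is of order $\alpha R^{-2s}$ and propagates outwards by monotonicity, while the upper bound follows from $\alpha \le 2$.

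For the operator estimate, and fixing some $x\in B_R$, I would split
\begin{equation*}
\L_K w(x) \,=\, \int_{B_{\rho}(x)}\bigl(w(x)-w(y)\bigr)K(x,y)\,dy + \int_{B_R \setminus B_{\rho}(x)}\bigl(w(x)-w(y)\bigr)K(x,y)\,dy + \int_{\R^n \setminus B_R}\bigl(w(x)-w(y)\bigr)K(x,y)\,dy,
\end{equation*}
with $\rho = \rho(x)$ a small fixed fraction of $R+1-|x|$. The near-diagonal piece is bounded through Lemmata~\ref{LKlems<} and~\ref{LKlemsge} together with the pointwise estimates $\|\nabla^{(k)}\phi\|_{L^\infty(B_\rho(x))}\le C\alpha(R+1-|x|)^{-2s-k}$ for $k=1,2$, yielding a contribution of order $\Lambda\alpha(R+1-|x|)^{-2s}$. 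The annular piece is controlled via $K \le \Lambda K_s$ combined with the explicit profile of $\phi$; the change of variables $z = (y-x)/(R+1-|x|)$ exhibits the correct scaling and produces a contribution of the same order. Finally, the exterior piece is estimated using $|w(x)-1|\le 2$ and the standard bound $\int_{\R^n\setminus B_R}K_s(x,y)\,dy \lesssim (R-|x|)^{-2s}$.

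The main obstacle will be to make the resulting estimate small enough: each piece is naturally bounded by a universal multiple of $(R+1-|x|)^{-2s}$, which gives $|\L_K w(x)| \le C(1+w(x))$ with $C$ universal but not necessarily smaller than $\delta$. The required smallness must be extracted by exploiting partial cancellations between the annular and exterior contributions, which arise from the specific choice of the power profile $(R+1-r)^{-2s}$, and by taking $R$ large in terms of $\delta$ (this is exactly the role of the hypothesis $R\ge C$). Concretely, one should produce a decaying factor, of the form $R^{-\varepsilon}$ or $(R+1-|x|)^{-\varepsilon}$ with some $\varepsilon>0$, in the residual of $\L_K w$ after cancellation; combined with a calibrated choice of $\alpha$, this ensures that the overall constant is bounded by $\delta$. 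This is the most delicate step of the argument, and as indicated in the statement, it essentially follows the construction in~\cite{CP16}.
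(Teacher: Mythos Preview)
Your setup and the plan to invoke Lemmata~\ref{LKlems<}--\ref{LKlemsge} are reasonable, and you correctly isolate the crux: passing from a universal bound~$|\L_K w|\le C(1+w)$ to~$|\L_K w|\le\delta(1+w)$. However, the mechanism you propose --- ``partial cancellations between the annular and exterior contributions'' --- is not available for your profile, and this is a genuine gap.

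With~$1+w(x)=\alpha(R+1-|x|)^{-2s}$,~$\alpha\in(0,2]$, and~$d:=R-|x|\gg 1$, your near-diagonal and annular pieces are of order~$\Lambda\alpha\,d^{-4s}$ (plus~$\Gamma\alpha\,d^{-2s-1+\nu}$ when~$s\ge 1/2$), hence of \emph{lower} order than~$1+w(x)\sim\alpha d^{-2s}$. The exterior piece, on the other hand, satisfies
\[
\int_{\R^n\setminus B_R}\bigl(w(x)-1\bigr)K(x,y)\,dy\;=\;-\bigl(2-\alpha(d+1)^{-2s}\bigr)\int_{\R^n\setminus B_R}K(x,y)\,dy,
\]
which has a definite sign and magnitude~$\sim \Lambda d^{-2s}$. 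Nothing of the same order is produced by the other pieces, so no cancellation can bring the leading term below~$(\Lambda/\alpha)(1+w(x))$. Since~$\alpha\le 2$, this ratio cannot be made~$\le\delta$ by any choice of~$R$.

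The paper's construction avoids this obstruction by \emph{scaling} rather than by cancellation. One first builds a universal profile~$v\in C^{1,1}(\R^n,[0,1])$ on a ball of fixed radius~$r$, with~$v+16r^{-2s}\asymp\min\{(r-|z|)^{-2s},1\}$, and then sets
\[
w(x)\,:=\,(2-\beta)\,v\!\left(\tfrac{r}{R}x\right)+\beta-1,\qquad \beta:=32r^{-2s},\qquad \sigma:=R/r\gg 1.
\]
In your language this amounts to taking an effective amplitude~$\alpha\sim\sigma^{2s}$ (large, depending on~$\delta$), with the power law capped at~$\sim 1$ in a boundary layer of width~$\sim\sigma$ rather than~$\eta\le 1$. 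The operator estimate is then obtained by applying Lemmata~\ref{LKlems<}--\ref{LKlemsge} to~$v$ with the rescaled kernel~$K_\sigma(x,y):=K(\sigma x,\sigma y)$, whose constants carry factors~$\sigma^{-n-2s}$ and~$\sigma^{-n-1+\nu}$; after undoing the change of variables one finds
\[
|\L_K w(x)|\;\le\; c\,\sigma^{-(1-\bar\nu)}\,(1+w(x)),\qquad
\bar\nu:=\begin{cases}1-2s & s<1/2,\\ \nu & s\ge 1/2,\end{cases}
\]
and choosing~$\sigma$ large in terms of~$\delta$ yields~\eqref{LKwbar}. No cancellation enters; the gain is purely dimensional. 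Replacing your thin boundary layer~$\eta\le 1$ by a layer of width comparable to~$\sigma$ (equivalently, adopting the scaling viewpoint) is the missing ingredient.
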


The proof of Lemma~\ref{barrierlem} is an adaptation of that of~\cite[Lemma~3.1]{SV14}
for the fractional Laplacian. In our case, the computations involved
are slightly more delicate, due to the more general form of the interaction kernel.

\begin{proof}[Proof of Lemma~\ref{barrierlem}]
Fix a value
\begin{equation} \label{r0def}
r_1 \ge 2^{3/s},
\end{equation}
and let~$r \ge r_1$. Then, set~$\ell(t) := (r - t)^{- 2 s}$, for any~$0 \le t < r$, and define
$$
\gamma_r := \left[ \ell(r - 1) - \ell(r / 2) - \ell'(r / 2) \left( r/2 - 1 \right) \right]^{-1}.
$$
Note that
\begin{align*}
\ell(r - 1) - \ell(r/2) - \ell'(r/2) (r/2 - 1) & = 1 - 2^{2 s} \left( 1 + 2 s - 4 s r^{- 1} \right) r^{- 2 s} \\
& \ge 1 - 12 {r_1}^{- 2 s} \\
& \ge 1 / 2,
\end{align*}
for any~$r \ge r_1$. Thus,~$\gamma_r$ is well-defined and
\begin{equation} \label{alpharbounds}
1 < \gamma_r \le 2.
\end{equation}

Consider the function~$h: [0, +\infty) \to [0, 1]$ defined by
$$
h(t) := \begin{cases}
0          & \mbox{if } t \in [0, r/2) \\
\gamma_r \left( \ell(t) - \ell(r/2) - \ell'(r/2) (t - r/2) \right) & \mbox{if } t \in [r/2, r - 1) \\
1          & \mbox{if } t \ge r - 1.
\end{cases}
$$
We have
$$
h(r/2) = 0, \quad h'(r/2) = 0 \quad \mbox{and} \quad h(r - 1) = 1,
$$
so that~$h \in C^{0, 1}([0, +\infty)) \cap C^{1, 1}([0, r - 1])$. Furthermore, recalling~\eqref{alpharbounds}, for~$t \in (r/2, r - 1)$ we have
\begin{equation} \label{h'h''}
\begin{aligned}
|h'(t)| & = \gamma_r |\ell'(t) - \ell'(r / 2)| = 2 s \gamma_r \left[ (r - t)^{- 2 s - 1} - (r / 2)^{- 2 s - 1} \right] \le 4 (r - t)^{- 2 s - 1} \\
|h''(t)| & = \gamma_r |\ell''(t)| = 2 s (2s + 1) \gamma_r (r - t)^{- 2 s - 2} \le 12 (r - t)^{- 2 s - 2}.
\end{aligned}
\end{equation}
As~$h$ is constant outside of~$(r/2, r - 1)$, the above estimates also holds for a.a.~$t \ge 0$.

We want to modify~$h$ between~$r - 2$ and~$r - 1$ in order to obtain a new function~$g$ of class~$C^{1, 1}$ on the whole half-line. To do this, let~$\eta \in C^\infty([0, + \infty))$ be a cut-off function with~$0 \le \eta \le 1$,~$\eta = 1$ in~$[0, r - 7 / 4]$,~$\eta = 0$ in~$[r - 5 / 4, +\infty)$,~$- 4 \le \eta' \le 0$ and~$|\eta''| \le 32$. We then set
$$
g(t) := \eta(t) h(t) + 1 - \eta(t) \quad \mbox{for any } t \ge 0.
$$
Of course,~$g \in C^{1, 1}([0, +\infty))$,~$0 \le g \le 1$ and~$g$ coincides with~$h$ outside of~$(r - 2, r - 1)$. On the other hand, by~\eqref{h'h''}, for~$t \in (r - 2, r - 1)$ we compute
\begin{align*}
|g'(t)| & \le |h'(t)| \chi_{[r - 2, r - 5/4]}(t) + 4 (1 - h(t)) \\
& \le 4 (r - t)^{- 2 s - 1} \chi_{[r - 2, r - 5/4]}(t) + 4 \\
& \le 40 \min \left\{ 1, (r - t)^{- 2 s - 1} \right\},
\end{align*}
and
\begin{align*}
|g''(t)| & \le |h''(t)| \chi_{[r - 2, r - 5/4]}(t) + 8 |h'(t)| \chi_{[r - 7/4, r - 5/4]}(t) + 32 (1 - h(t)) \\
& \le 12 (r - t)^{- 2 s - 2} \chi_{[r - 2, r - 5/4]}(t) + 32 (r - t)^{- 2 s -1} \chi_{[r - 7/4, r - 5/4]}(t) + 32 \\
& \le 600 \min \{ 1, (r - t)^{- 2 s - 2} \}.
\end{align*}
By combining these last two estimates with~\eqref{h'h''} (recall that~$g = h$ outside of~$(r - 2, r - 1)$), we conclude that there exists a numerical constant~$c_1 > 0$ such that
\begin{equation} \label{g'g''}
|g'(t)| \le c_1 \min\{ (r - t)^{- 2 s - 1}, 1 \} \quad \mbox{and} \quad |g''(t)| \le c_1 \min \{ (r - t)^{- 2 s - 2}, 1 \},
\end{equation}
for a.a.~$t \in [0, r]$. Moreover, we claim that
\begin{equation} \label{gbounds}
\min \left\{ (r - t)^{- 2 s}, 1 \right\} \le g(t) + 16 r^{- 2 s} \le 20 \min \left\{ (r - t)^{- 2 s}, 1 \right\} \quad \mbox{for any } t \in [0, r].
\end{equation}
Since the right-hand inequality of~\eqref{gbounds} follows almost directly from the definition of~$g$, we focus on the left estimation. The bound is clearly valid when~$t \ge r - 1$, as~$g = 1$ there. Also, when~$t \le r/2$, it holds~$g = 0$ and~$(r - t)^{- 2 s} \le 4 r^{- 2 s}$. Finally, when~$t \in (r/2, r - 1)$, using~\eqref{alpharbounds} we have
\begin{align*}
g(t) & \ge h(t) \ge \ell(t) - \ell(r/2) - \ell'(r/2) \left( t - r/2 \right) \\
& = (r - t)^{- 2 s} - (r/2)^{- 2 s} - 2 s (r/2)^{-2 s - 1} \left( t - r/2 \right) \\
& \ge (r - t)^{- 2 s} - 2^{2 s} \left( 1 +  2 s \right) r^{- 2 s} \\
& \ge (r - t)^{- 2 s} - 16 r^{- 2 s}.
\end{align*}
In any case,~\eqref{gbounds} is established.

Let now~$v(x) := g(|x|)$, for any~$x \in \R^n$. By the properties of~$g$, we recover that~$v \in C^{1, 1}(\R^n)$ is radially symmetric, radially non-decreasing and satisfies~$v = 0$ in~$B_{r/2}$,~$v = 1$ in~$\R^n \setminus B_r$. Moreover, we infer from~\eqref{gbounds} that, for~$x \in B_r$, it holds
\begin{equation} \label{vbounds}
\min \left\{ (r - |x|)^{- 2 s}, 1 \right\} \le v(x) + 16 r^{- 2 s} \le 20 \min \left\{ (r - |x|)^{- 2 s}, 1\right\}.
\end{equation}
We claim that for any~$x \in B_r$
\begin{equation} \label{supnablav1}
\| \nabla v \|_{L^\infty(B_{\max \left\{ (r - |x|) / 2, 1 \right\}}(x))} \le 
c_2 \max \left\{ \frac{r - |x|}{2}, 1 \right\}^{- 2 s - 1},
\end{equation}
and
\begin{equation} \label{supnablav2}
\| \nabla^2 v \|_{L^\infty (B_{\max \left\{ (r - |x|) / 2, 1 \right\}}(x))} \le c_2 \max \left\{ \frac{r - |x|}{2}, 1 \right\}^{- 2 s - 2},
\end{equation}
for some dimensional constant~$c_2 > 0$.  Estimate~\eqref{supnablav1} is almost immediate. Indeed, recalling~\eqref{g'g''}, for any~$y \in B_r$ we get
\begin{equation} \label{nablavy}
|\nabla v(y)| = |g'(|y|)| \le c_1 \min \{ (r - |y|)^{- 2 s - 1}, 1 \}.
\end{equation}
Fix now~$x \in B_r$ and take any~$y \in B_{(r - |x|) / 2}(x)$. Clearly,~$y \in B_r$. Also,
$$
|y| \le |y - x| + |x| \le \frac{r - |x|}{2} + |x| = \frac{r + |x|}{2},
$$
and thus
$$
r - |y| \ge r - \frac{r + |x|}{2} = \frac{r - |x|}{2}. 
$$
By this and~\eqref{nablavy}, it follows that
\begin{equation} \label{nablavy2}
\left| \nabla v(y) \right| \le c_1 \min \left\{ \left( \frac{r - |x|}{2} \right)^{- 2 s - 2}, 1 \right\},
\end{equation}
for any~$y \in B_{(r - |x|) / 2}(x)$. Finally, when~$(r - |x|) / 2 \le 1$ we use again~\eqref{nablavy} to deduce that~\eqref{nablavy2} holds also for~$y \in B_1(x) \cap B_r$. The proof of~\eqref{supnablav2} follows similarly, by noticing that, for~$y \in B_r \setminus \overline{B_{r/2}}$, by~\eqref{g'g''} it holds
\begin{align*}
\left| \nabla^2 v(y) \right| & \le |g''(|y|)| + \sqrt{2 (n + 1)} \, \frac{|g'(|y|)|}{|y|} \\
& \le 2 n c_1 \left( \min \left\{ \left( \frac{r - |y|}{2} \right)^{- 2 s - 2}, 1 \right\} + |y|^{-1} \min \left\{ \left( \frac{r - |y|}{2} \right)^{- 2 s - 1}, 1 \right\} \right) \\
& \le 50 n c_1 \min \{ (r - |y|)^{- 2 s - 2}, 1 \},
\end{align*}
where to obtain the last inequality we took advantage of the fact that~$|y| \ge r / 2 \ge r - |y|$ and~$r \ge r_1 > 2$, by~\eqref{r0def}.

With this in hand, we can deduce an estimate for~$L_{K_\sigma} v$ in~$B_r$, where~$K_\sigma$ is the scaled kernel defined by
$$
K_\sigma(x, y) := K(\sigma x, \sigma y), \quad \mbox{for a.a.~} x, y \in \R^n,
$$
with~$\sigma \ge 1$. Observe that~$K_\sigma$ satisfies~\eqref{Ksymmetry},~\eqref{Kbounds} and~\eqref{Kreg} with~$\lambda_\sigma := \sigma^{- n - 2 s} \lambda$,~$\Lambda_\sigma := \sigma^{- n - 2 s} \Lambda$ and~$\Gamma_\sigma := \sigma^{- n - 1 + \nu} \Gamma$. We apply either Lemma~\ref{LKlems<}, if~$s < 1/2$, or Lemma~\ref{LKlemsge}, if~$s \ge 1/2$, with~$\rho = \max \left\{ (r - |x|) / 2, 1 \right\}$. In view of~\eqref{supnablav1},~\eqref{supnablav2} and~\eqref{vbounds}, it is easy to see that, for any~$x \in B_r$,
\begin{equation} \label{LKv}
\left| \L_{K_\sigma} v(x) \right| \le c_3 \sigma^{- n - 1 + \bar{\nu}} \left( v(x) + 16 r^{-2 s} \right),
\end{equation}
for some constant~$c_3 > 0$, which may depend on~$n$,~$s$,~$\Lambda$ and also~$\Gamma$, if~$s \ge 1/2$, and where we set
$$
\bar{\nu} := \begin{cases}
1 - 2 s & \quad \mbox{if } s \in (0, 1/2) \\
\nu     & \quad \mbox{if } s \in [1/2, 1).
\end{cases}
$$
Without loss of generality, we may take
\begin{equation} \label{c3>delta}
c_3 \ge \delta.
\end{equation}

We are now able to construct the function~$w$. We take~$R \ge R_0$, with
\begin{equation} \label{R0def}
R_0 := \left( \frac{c_3}{\delta} \right)^{\frac{1}{1 - \bar{\nu}}} r_1,
\end{equation}
and set
\begin{equation} \label{rdef}
r := \frac{r_1}{R_0} R.
\end{equation}
Notice that~$r \ge r_1$. We then define
$$
w(x) := (2 - \beta) v\left( \frac{r}{R} x \right) + \beta - 1,
$$
where~$\beta := 32 r^{- 2 s}$. Clearly,~$\beta \in (0, 1)$, since~$r \ge r_1$ and~\eqref{r0def} is in force.

The function~$w$ thus obtained inherits all the qualitative properties of~$v$. That is,~$w$ is of class~$C^{1, 1}(\R^n)$, is radially symmetric and radially non-decreasing. Moreover,~$w = \beta - 1$ in~$B_{R / 2}$ and~$w = 1$ in~$\R^n \setminus B_R$. Now we check that~$w$ satisfies properties~\eqref{LKwbar} and~\eqref{1+wbarest}.

By changing variables appropriately, applying~\eqref{LKv} with~$\sigma := R / r \ge 1$, which holds thanks to~\eqref{c3>delta}, and recalling definitions~\eqref{R0def}-\eqref{rdef}, we get
\begin{align*}
\left| \L_K w(x) \right| & = (2 - \beta) \left( \frac{R}{r} \right)^n \left| \L_{K_{R/r}} v \left( \frac{r}{R} x \right) \right| \\
& \le c_3 (2 - \beta) \left( \frac{R}{r} \right)^{- 1 + \bar{\nu}} \left( v \left( \frac{r}{R} x \right) + 16 r^{- 2 s} \right) \\
& \le c_3 \left( \frac{R}{r} \right)^{- 1 + \bar{\nu}} \left(  (2 - \beta) v \left( \frac{r}{R} x \right) + 32 r^{- 2 s} \right) \\
& = \delta \left( 1 + w(x) \right),
\end{align*}
for any~$x \in B_R$. Thus,~\eqref{LKwbar} is established. The validity of the inequalities in~\eqref{1+wbarest} basically relies on~\eqref{vbounds}. Namely, being~$\beta$ positive and taking advantage of the upper estimate in~\eqref{vbounds}, along with~\eqref{R0def} and~\eqref{rdef}, we have, for any~$x \in B_R$, 
\begin{align*}
1 + w(x) & \le 2 \left[ v\left( \frac{r}{R} x \right) + 16 r^{- 2 s} \right] \\
& \le 40 \min \left\{ \left( \frac{c_3}{\delta} \right)^{\frac{2 s}{1 - \bar{\nu}}} \left( R - |x| \right)^{- 2 s}, 1 \right\} \\
& \le c_4 \left( R + 1 - |x| \right)^{- 2 s},
\end{align*}
for some constant~$c_4 > 0$ which depends on~$n, s, \Lambda, \Gamma, \nu$ and~$\delta$. The left-hand inequality of~\eqref{1+wbarest} follows similarly. Indeed, we first note that~$2 - \beta \ge 1$, since~$\beta \le 1$. Hence, by this,~\eqref{vbounds},~\eqref{R0def} and~\eqref{rdef}, we get
\begin{align*}
1 + w(x) \ge v\left( \frac{r}{R} x \right) + 16 r^{- 2 s} \ge \min \left\{ \left( \frac{c_3}{\delta} \right)^{\frac{2 s}{1 - \bar{\nu}}} \left( R - |x| \right)^{- 2 s}, 1 \right\} \ge c_5 \left( R + 1 - |x| \right)^{- 2 s},
\end{align*}
for some~$c_5 > 0$ which depends on the same parameters as~$c_4$.

The proof of the lemma is therefore complete, by eventually taking
\begin{equation*}
C := \max \left\{ R_0, \frac{1}{32} \left( \frac{r_1}{R_0} \right)^{2 s}, c_4, \frac{1}{c_5} \right\}. \qedhere
\end{equation*}
\end{proof}

We now proceed to the

\begin{proof}[Proof of Theorem~\ref{densestthm}]
Of course, it is enough to prove the result for~$x_0 = 0$. Also, we can restrict ourselves to show the validity of~\eqref{densest1} only, as the dual estimate~\eqref{densest2} can be then recovered in a completely analogous fashion or by just exchanging~$u$ with~$-u$.

Note that, thanks to a simple argument involving the energy estimate~\eqref{enest}, we can assume without loss of generality~$\theta \le \theta_0$. Furthermore, we are free to prove the existence of suitable constants~$\bar{c}$ and~$\bar{R}$ (as in the statement of the theorem) that \emph{both} depend on~$\theta$,~$\theta_0$, along with universal quantities. Then, employing again the energy estimates, we are able to prove that~$\bar{c}$ actually depends on universal quantities only. We refer to~\cite[Subsection~3.1]{SV14} for more details on this.

By the H\"{o}lder continuity of~$u$ (see e.g.~\cite[Section~2]{CV17}), we infer from~$u(0) \ge \theta_0$, which holds by hypothesis, that there exist two constants~$R_o \in (0, R)$ and~$\mu > 0$ such that
\begin{equation} \label{u>thetamu}
\left| \left\{ u > \theta \right\} \cap B_{R_o} \right| \ge \mu.
\end{equation}
Moreover, after a scaling argument, we are allowed to suppose~$\mu \ge e^n$. See again~\cite[Subsection~3.1]{SV14}.

Fix~$H > 2 (R_o + 1)$ and take~$R > 2 H$. Let~$w$ be the~$C^{1, 1}$ function constructed in Lemma~\ref{barrierlem}. Recall that
\begin{equation} \label{w=1prop}
w = 1 \quad \mbox{in } \R^n \setminus B_R.
\end{equation}
Define
\begin{equation} \label{vuw}
v := \min \left\{ u, w \right\},
\end{equation}
and observe that, by~\eqref{w=1prop},
\begin{equation} \label{v=uout}
v = u \quad \mbox{in } \R^n \setminus B_R.
\end{equation}
Writing for simplicity
$$
\K(u; B_R) := \frac{1}{2} \iint_{\C_{B_R}} |u(x) - u(y)|^2 K(x, y) \, dx dy,
$$
where~$\C_{B_R}$ is as in~\eqref{COmegadef}, in view of~\eqref{vuw},~\eqref{v=uout} and~\eqref{Ksymmetry} we have
\begin{align*}
& \K(u - v; B_R) + \K(v; B_R) - \K(u; B_R) \\
& \hspace{50pt} = - \iint_{\C_{B_R}} \left( (u - v)(x) - (u - v)(y) \right) \left( v(x) - v(y) \right) K(x, y) \, dx dy \\
& \hspace{50pt} = - \int_{\R^n} \int_{\R^n} \left( (u - v)(x) - (u - v)(y) \right) \left( v(x) - v(y) \right) K(x, y) \, dx dy \\
& \hspace{50pt} = - 2 \int_{\R^n} \int_{\R^n} \left( u(x) - v(x) \right) \left( v(x) - v(y) \right) K(x, y) \, dx dy \\
%& \hspace{50pt} = - 2 \int_{B_R \cap \{ w = v < u \}} \left( u(x) - v(x) \right) \left( \PV \int_{\R^n} \left( w(x) - v(y) \right) K(x, y) \, dy \right) dx \\
& \hspace{50pt} = - 2 \int_{B_R \cap \{ w = v < u \}} \left( u(x) - w(x) \right) \left( \PV \int_{\R^n} \left( w(x) - w(y) \right) K(x, y) \, dy \right) dx \\
& \hspace{50pt} \le 2 \int_{B_R \cap \{ w < u \}} \left( u(x) - w(x) \right) \left| \L_K w(x) \right| dx.
\end{align*}

The proof then continues as in~\cite{SV14}. We take advantage of the minimality of~$u$, together with hypotheses~\eqref{Wbound},~\eqref{W''} on~$W$ and estimates~\eqref{LKwbar},~\eqref{1+wbarest} provided by Lemma~\ref{barrierlem}
%(with~$\delta = \kappa / 4$)
, to get
\begin{align*}
& \K(u - v; B_R) + \frac{\kappa}{2} \int_{B_R \cap \{ w < u \le \theta_\star \}} \left( 1 + w \right) \left( u - w \right) \, dx \\
& \hspace{90pt} \le c \int_{B_R \cap \{ u > \theta_\star \}} \left( R + 1 - |x| \right)^{- 2 s} \, dx - A(R),
\end{align*}
for some~$c > 0$, where~$\theta_\star$ is any fixed parameter that satisfies
\begin{equation} \label{thetastardef}
-1 < \theta_\star \le \min \left\{ \theta, -1 + \kappa \right\},
\end{equation}
and
$$
A(R) := \kappa \int_{B_R \cap \{ w < u \le \theta_\star \}} (u - w)^2 \, dx.
$$
Setting
$$
V(R) := \left| B_R \left\{ u > \theta_\star \right\} \right|,
$$
we use the coarea formula along with~\eqref{Kbounds} to obtain (recall that~$\xi \ge 3 R$)
\begin{equation} \label{densestthmtech1}
\begin{aligned}
& \frac{\lambda}{2} \int_{B_R} \int_{B_{2R}} \frac{|(u - v)(x) - (u - v)(y)|^2}{|x - y|^{n + 2 s}} \, dx dy + \frac{\kappa}{2} \int_{B_R \cap \{ w < u \le \theta_\star \}} \left( 1 + w \right) \left( u - w \right) \, dx \\
& \hspace{80pt} \le \K(u - v; B_R) + \frac{\kappa}{2} \int_{B_R \cap \{ w < u \le \theta_\star \}} \left( 1 + w \right) \left( u - w \right) \, dx \\
& \hspace{80pt} \le c \int_0^R \left( R + 1 - t \right)^{- 2 s} \left( \int_{\partial B_t} \chi_{\{ u > \theta_\star \}}(x) \, d\Haus^{n - 1}(x) \right) dt - A(R) \\
& \hspace{80pt} = c \int_0^R \left( R + 1 - t \right)^{- 2 s} V'(t) \, dt - A(R).
\end{aligned}
\end{equation}

Recalling the definition~\eqref{Psidef} of~$\Psi_s$, we claim that
\begin{equation} \label{VleV'}
V(R - H)^{\frac{n - 1}{n}} \Psi_s \left( \sqrt[n]{V(R - H} \right) \le \widetilde{C} \int_0^R (R + 1 - t)^{-2s} V'(t) \, dt,
\end{equation}
for some~$\widetilde{C} \ge 1$, provided~$H$ is large enough.

To prove~\eqref{VleV'}, we consider separately the two cases~$s < 1/2$ and~$s \ge 1/2$.

In the first situation we argue as in~\cite{SV11}. Recalling that~$u - v = 0$ outside of~$B_R$ and using the fractional Poincar\'e inequality, we compute
\begin{align*}
\int_{B_R} \int_{\R^n \setminus B_{2 R}} \frac{|(u - v)(x) - (u - v)(y)|^2}{|x - y|^{n + 2 s}} \, dx dy & = \int_{B_R} |(u - v)(x)|^2 \left( \int_{\R^n \setminus B_{2 R}} \frac{dy}{|x - y|^{n + 2 s}} \right) dx \\
& \le \frac{c}{R^{2 s}} \int_{B_{2 R}} |(u - v)(x)|^2 \, dx \\
& \le c \int_{B_R} \int_{B_{2 R}} \frac{|(u - v)(x) - (u - v)(y)|^2}{|x - y|^{n + 2 s}} \, dx dy,
\end{align*}
from which it follows immediately that
$$
\int_{B_R} \int_{B_{2R}} \frac{|(u - v)(x) - (u - v)(y)|^2}{|x - y|^{n + 2 s}} \, dx dy \ge c \int_{\R^n} \int_{\R^n} \frac{|(u - v)(x) - (u - v)(y)|^2}{|x - y|^{n + 2 s}} \, dx dy.
$$
By means of this and the fractional Sobolev inequality, we get
\begin{equation} \label{densestthmtech2}
\int_{B_R} \int_{B_{2R}} \frac{|(u - v)(x) - (u - v)(y)|^2}{|x - y|^{n + 2 s}} \, dx dy \ge c \| u - v \|_{L^{\frac{2 n}{n - 2 s}}(\R^n)}^2 = c \| u - v \|_{L^{\frac{2 n}{n - 2 s}}(B_R)}^2.
\end{equation}
Notice that, by taking~$H$ large enough (in dependence of~$\theta_\star$), by~\eqref{1+wbarest} we have
$$
w \le - 1 + \frac{1 + \theta_\star}{2} \quad \mbox{in } B_{R - H}.
$$
Hence,
\begin{equation} \label{u-vge}
|u - v| \ge u - v \ge u - w \ge \frac{1 + \theta_\star}{2} \quad \mbox{in } \{ u > \theta_\star \} \cap B_{R - H},
\end{equation}
so that
\begin{align*}
\| u - v \|_{L^{\frac{2 n}{n - 2 s}}(B_R)}^2 & \ge \left( \int_{\{ u > \theta_\star \} \cap B_{R - H}} |(u - v)(x)|^{\frac{2 n}{n - 2 s}} \, dx \right)^{\frac{n - 2 s}{n}} \\
& \ge \left( \frac{1 + \theta_\star}{2} \right)^2 \left| \{ u > \theta_\star \} \cap B_{R - H} \right|^{\frac{n - 2 s}{n}} \\
& = c \, V(R - H)^{\frac{n - 2 s}{n}}.
\end{align*}
This,~\eqref{densestthmtech1} and~\eqref{densestthmtech2} imply claim~\eqref{VleV'} for~$s < 1/2$.

On the other hand, when~$s \ge 1/2$ we follow the strategy displayed in~\cite{SV14}. Define
\begin{align*}
a_R & := \left\{ u - w \ge \frac{1 + \theta_\star}{4} \right\} \cap B_R,\\
b_R & := \left\{ \frac{1 + \theta_\star}{8} < u - w < \frac{1 + \theta_\star}{4} \right\} \cap B_R, \\
d_R & := \left( \R^n \setminus B_R \right) \cup \left( \left\{ u - w \le \frac{1 + \theta_\star}{8} \right\} \cap B_R \right).
\end{align*}
Note that, by~\eqref{u-vge}
$$
a_R \supseteq \left\{ u > \theta_\star \right\} \cap B_{R - H} \supseteq \left\{ u > \theta_\star \right\} \cap B_{R_o}.
$$
Hence, by~\eqref{u>thetamu} and~\eqref{thetastardef} we deduce that
\begin{equation} \label{aRgemu}
|a_R| \ge \mu,
\end{equation}
so that we may apply the geometric formula~(3.46) in~\cite{SV14} to obtain that
$$
\int_{a_R} \int_{d_R} \frac{dx dy}{|x - y|^{n + 2 s}} + |b_R| \ge 2 c_1 \, |a_R|^{\frac{n - 1}{n}} \Psi_s \left( \sqrt[n]{|a_R|} \right),
$$
for some constant~$c_1 > 0$. We then observe that
$$
\int_{a_R} \int_{\R^n \setminus B_{2 R}} \frac{dx dy}{|x - y|^{n + 2 s}} \le \frac{c_2}{R^{2 s}} |a_R|,
$$
for~$c_2 > 0$, and thus
\begin{align*}
\int_{a_R} \int_{d_R \cap B_{2 R}} \frac{dx dy}{|x - y|^{n + 2 s}} + |b_R| & = \int_{a_R} \int_{d_R} \frac{dx dy}{|x - y|^{n + 2 s}} + |b_R| - \int_{a_R} \int_{\R^n \setminus B_{2 R}} \frac{dx dy}{|x - y|^{n + 2 s}} \\
& \ge 2 c_1 \, |a_R|^{\frac{n - 1}{n}} \Psi_s \left( \sqrt[n]{|a_R|} \right) - \frac{c_2}{R^{2 s}} |a_R| \\
& = |a_R|^{\frac{n - 1}{n}} \Psi_s \left( \sqrt[n]{|a_R|} \right) \left[ 2 c_1 - \frac{c_2}{R^{2 s}} \frac{\sqrt[n]{|a_R|}}{\Psi_s \left( \sqrt[n]{|a_R|} \right)} \right] \\
& \ge |a_R|^{\frac{n - 1}{n}} \Psi_s \left( \sqrt[n]{|a_R|} \right) \left[ 2 c_1 - \frac{c_3}{R^{2 s - 1} \Psi_s \left( R \right)} \right],
\end{align*}
for some constant~$c_3 > 0$. Note that in the last line we took advantage of the fact that the function~$t / \Psi_s(t)$ is monotone increasing, at least\footnote{We point out that, by~\eqref{aRgemu} and the fact that we chose~$\mu \ge e^n$, it follows immediately that~$\sqrt[n]{|a_R|} \ge e$.} for~$t \ge e$. Since~$s \ge 1/2$, we may and do choose~$R$ sufficiently large for the quantity in square brackets to be larger than~$c_1$. This yields that
$$
\int_{a_R} \int_{d_R \cap B_{2 R}} \frac{dx dy}{|x - y|^{n + 2 s}} + |b_R| \ge c_1 |a_R|^{\frac{n - 1}{n}} \Psi_s \left( \sqrt[n]{|a_R|} \right).
$$
With the aid of this estimate and~\eqref{densestthmtech1}, an approach identical to that followed by~\cite{SV14} easily implies that~\eqref{VleV'} holds true also when~$s \ge 1/2$.

To conclude the proof of the theorem, we note that~\eqref{VleV'} is formula~(3.55) of~\cite{SV14}. An iterative procedure as the one performed there then finishes the argument.
\end{proof}

The density estimates that we just proved ensure that both
the sub- and superlevel sets of non-trivial minimizers have full measure in large balls. In the next result we use such estimates to deduce some information on the size of the interfaces of those minimizers.

\begin{proposition} \label{intdensprop}
Let~$u: \R^n \to [-1, 1]$ be a minimizer of~$\E$ in~$B_R(x_0)$, for some~$x_0 \in \R^n$ and~$R > 0$. Fix~$\theta, \theta_0 \in (0, 1)$ and suppose that~$u(x_0) \in [-\theta_0, \theta_0]$. Then, there exist two constants~$\tilde{c} \in (0, 1)$ and~$\tilde{R} \ge 4$ such that
\begin{equation} \label{intdensest}
\left| \left\{ |u| < \theta \right\} \cap B_R(x_0) \right| \ge \tilde{c} R^{n - 1},
\end{equation}
provided~$\tilde{R} \le R \le \xi$. The constants~$\tilde{c}$ and~$\tilde{R}$ depend only on~$\theta$,~$\theta_0$ and on universal quantities.
\end{proposition}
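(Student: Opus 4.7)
The plan is to combine the density estimates of Theorem~\ref{densestthm} with the boundary cube-counting device of Corollary~\ref{densestAcor}, using the uniform Hölder continuity of minimizers (see, e.g., \cite[Section~2]{CV17}, as also invoked in the proof of Theorem~\ref{densestthm}) to certify that a fixed-size ball sitting on the zero set of~$u$ is entirely contained in the interface $\{|u|<\theta\}$. The outcome of this scheme is a family of disjoint unit-scale cubes, of cardinality of order $R^{n-1}$, each contributing a definite amount to the measure of the interface.

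More concretely, the first step is to exploit the Hölder regularity of minimizers (with universal constants~$C_0>0$ and~$\alpha_0\in(0,1]$) to fix a length scale $\epsilon_0=\epsilon_0(\theta)>0$, depending on~$\theta$ and on universal quantities, such that whenever $u(x_\star)=0$ one has $|u(y)|<\theta$ for every $y\in B_{\epsilon_0}(x_\star)$. The second step is to transfer the density bounds of Theorem~\ref{densestthm} at~$x_0$---applied with threshold~$0$ using $u(x_0)\ge -\theta_0$, and with its dual form using $u(x_0)\le\theta_0$---into the cube $Q:=Q_{r_c}(x_0)$ of side $r_c:=R/\sqrt{n}$. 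Since this cube contains the ball $B_{r_c/2}(x_0)$ and is contained in $B_{R/2}(x_0)$, both densities descend to $Q$: the density condition~\eqref{densestA} holds for the open set $A:=\{u>0\}$ inside~$Q$, with some $c_\sharp>0$ depending only on~$\theta_0$ and on universal quantities, provided $R$ is larger than a threshold $\tilde{R}\ge 4$ dictated by the~$\bar{R}$'s of Theorem~\ref{densestthm} and small enough that $r_c/2\le\xi/3$.

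At this point I would apply Corollary~\ref{densestAcor} to~$Q$ with the integer $k$ chosen so that each subcube $Q^{(j)}$ has side length bounded by $2\epsilon_0/\sqrt{n}$, i.e., $k\simeq R/\epsilon_0$. The corollary yields $N\ge c_{\star\star}k^{n-1}$ pairwise disjoint such cubes, each centered at some $x_j\in Q\cap\partial A$ and all contained in $Q_{2r_c}(x_0)\subset B_R(x_0)$. Since~$u$ is continuous and $x_j\in\partial\{u>0\}$ forces $u(x_j)=0$, the first step guarantees $Q^{(j)}\subset B_{\epsilon_0}(x_j)\subset\{|u|<\theta\}$. Summing the disjoint volumes gives
\[
\bigl|\{|u|<\theta\}\cap B_R(x_0)\bigr|\;\ge\;N\left(\frac{r_c}{k}\right)^n\;\simeq\;R^{n-1}\epsilon_0\;=\;\tilde{c}\,R^{n-1},
\]
which is precisely~\eqref{intdensest}.

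The main obstacle I anticipate is the coherent calibration of the scales $R$, $r_c$, $k$, and $\epsilon_0$: the density estimates of Theorem~\ref{densestthm} require a minimum radius (via~$\bar{R}$) and are capped at~$\xi/3$, while the geometric inclusion $Q^{(j)}\subset B_{\epsilon_0}(x_j)$ forces the $k$-grid to be fine enough in terms of~$\theta$ and the Hölder modulus. Once these constraints are met simultaneously by fixing $\tilde{R}$ large in terms of~$\theta$,~$\theta_0$, and universal quantities, the rest of the argument is a straightforward volume count.
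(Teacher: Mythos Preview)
Your proposal is correct and complete; the calibration of scales you flag as the main obstacle is indeed just bookkeeping and causes no real difficulty. However, your route is genuinely different from the paper's.

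The paper does not invoke Corollary~\ref{densestAcor}. Instead, after the same H\"older step, it works with the signed distance function~$d$ from~$\R^n\setminus\{u>0\}$, shows the inclusion~$\{|d|<\delta\}\subset\{|u|<\theta\}$, and then applies the \emph{coarea formula} to~$d$ together with the \emph{relative isoperimetric inequality} for the level sets~$\{d>t\}$. This yields~$|\{|d|<\delta\}|\gtrsim \delta\cdot|\{u>\theta\}|^{(n-1)/n}$, and Theorem~\ref{densestthm} closes the argument. Your approach replaces the coarea/isoperimetric step by the discrete cube-counting of Corollary~\ref{densestAcor}, which is itself built on the relative isoperimetric inequality via Proposition~\ref{densestAprop}. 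So the two proofs share the same underlying geometric input but package it differently: the paper's is the classical continuum route, while yours recycles a combinatorial tool already developed in Section~\ref{auxsec} (and used later for the~$s=1/2$ energy bound), which makes it economical within the paper's own toolkit. Either way the constants depend only on~$\theta$,~$\theta_0$, and universal quantities, as required.
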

\begin{proof}
It is well-known that~$u$ is of class~$C^{0, \alpha}(B_{3 R / 4}(x_0))$, for some universal~$\alpha \in (0, 1)$, with H\"{o}lder norm bounded independently of~$R \ge 1$, that is
\begin{equation} \label{Calphaesttech}
\| u \|_{C^{0, \alpha}(B_{3 R / 4}(x_0))} \le C_1,
\end{equation}
for some universal constant~$C_1 \ge 1$. See e.g.~\cite{C17} or~\cite[Section~2]{CV17} for a thorough proof of this fact.

Now, we focus on the proof of~\eqref{intdensest}. Notice that we can restrict ourselves to take~$\theta > 0$ suitably small.

We initially assume that~$u(x_0) = 0$. We claim that there exists~$\delta \in (0, 1)$, depending on~$\theta$ and on universal quantities, for which
\begin{equation} \label{leveldist1}
\left\{ |u| < \theta \right\} \cap B_{R / 3}(x_0) \supseteq \left\{ |d| < \delta \right\} \cap B_{R / 3}(x_0),
\end{equation}
where we indicate with~$d$ the signed distance from the set~$\R^n \setminus \{ u > 0 \}$, that is
$$
d(x) := \begin{cases}
\dist \left( x, \R^n \setminus \{ u > 0 \} \right) & \quad \mbox{if } x \in \{ u > 0 \} \\
- \dist \left( x, \{ u > 0 \} \right) & \quad \mbox{if } x \in \R^n \setminus \{ u > 0 \},
\end{cases}
$$
for any~$x \in \R^n$. Note that
\begin{equation} \label{u>0d>0}
\left\{ d > 0 \right\} \cap B_{R/3}(x_0) = \left\{ u > 0 \right\} \cap B_{R/3}(x_0).
\end{equation}

To check that~\eqref{leveldist1} holds true, let~$\delta \in (0, 1)$ and, given~$x \in \{ |d| < \delta \} \cap B_{R / 3}(x_0)$, take~$y_x \in \{ u = 0 \} \cap B_{3 R / 4}(x_0)$ to be a point at which~$d(x) = |x - y_x|$. Then, by~\eqref{Calphaesttech} we have
$$
|u(x)| = |u(x) - u(y_x)| \le C_1 \left| x - y_x \right|^\alpha = C_1 d(x)^\alpha < C_1 \delta^\alpha,
$$
and hence~$|u(x)| < \theta$, if we choose~$\delta \le \left( \theta / C_1 \right)^{1 / \alpha}$. Thus,~\eqref{leveldist1} follows.

Also observe that, in particular, formulae~\eqref{leveldist1} and~\eqref{u>0d>0} yield that
\begin{equation} \label{leveldist2}
\left\{ u > \theta \right\} \cap B_{R / 3}(x_0) \subseteq \left\{ d > \delta \right\} \cap B_{R / 3}(x_0).
\end{equation}

%Similarly, we can find~$\eta \in (0, 1)$, in dependence of~$\delta$ and universal quantities, such that
%\begin{equation} \label{leveldist2}
%\left\{ u > \eta \right\} \cap B_{R / 3}(x_0) \subseteq \left\{ d > \delta \right\} \cap B_{R / 3}(x_0).
%\end{equation}
%Indeed, let~$x \in \{ u > \eta \} \cap B_{R / 3}(x_0)$ and select~$y_x \in \{ u = 0 \} \cap B_{3 R / 4}(x_0)$ in a way that~$d(x) = |x - y_x|$. Then, using~\eqref{Calphaesttech} again,
%$$
%d(x) = |x - y_x| \ge \left( \frac{|u(x) - u(y_x)|}{C_1} \right)^{1 / \alpha} = \left( \frac{u(x)}{C_1} \right)^{1 / \alpha} > \left( \frac{\eta}{C_1} \right)^{1 / \alpha},
%$$
%and~\eqref{leveldist2} follows by letting~$\eta \ge C_1 \delta^\alpha$.

After this preliminary work, we are in position to show that~\eqref{intdensest} is valid. We use~\eqref{leveldist1} and the coarea formula (applied to the function~$d$) to compute
\begin{equation} \label{leveldtech}
\begin{aligned}
\left| \left\{ |u| < \theta \right\} \cap B_{R / 3}(x_0) \right| & \ge \left| \left\{ |d| < \delta \right\} \cap B_{R / 3}(x_0) \right| = \int_{\left\{ |d| < \delta \right\} \cap B_{R / 3}(x_0)} |\nabla d(x)| \, dx \\
& = \int_{-\delta}^\delta \Per \left( \left\{ d = t \right\}, B_{R / 3}(x_0) \right) \, dt,
\end{aligned}
\end{equation}
as the gradient of the distance function has modulus equal to~$1$. Notice that~$\Per(E, \Omega)$ indicates the perimeter of a Borel set~$E$ inside a domain~$\Omega$. Now, we assume without loss of generality (up to changing~$u$ with~$-u$) that
$$
\left| \{ u > 0 \} \cap B_{R / 3}(x_0) \right| \le \left| B_{R / 3}(x_0) \setminus \{ u > 0 \} \right|,
$$
and we consider~$t \in [\delta / 2, \delta]$. Thanks to this reduction, the fact that~$\{ d > t \} \subset \{ d > 0\}$ and identity~\eqref{u>0d>0}, it is clear that
$$
\left| \{ d > t \} \cap B_{R / 3}(x_0) \right| \le \left| B_{R / 3}(x_0) \setminus \{ d > t \} \right|,
$$
so that the relative isoperimetric inequality yields
\begin{align*}
\Per \left( \{ d = t \}, B_{R / 3}(x_0) \right) & \ge c_2 
\min \left\{ \left| \{ d > t \} \cap B_{R / 3}(x_0) \right|, \left| B_{R / 3}(x_0) \setminus \{ d > t \} \right| \right\}^{(n - 1) / n} \\
& = c_2 \left| \{ d > t \} \cap B_{R / 3}(x_0) \right|^{(n - 1) / n} \\
& \ge c_2 \left| \{ d > \delta \} \cap B_{R / 3}(x_0) \right|^{(n - 1) / n},
\end{align*}
for some dimensional
constant~$c_2 > 0$. Hence, by taking advantage of this and of
estimate~\eqref{leveldist2}, in virtue of~\eqref{leveldtech} we get
\begin{align*}
\left| \left\{ |u| < \theta \right\} \cap B_{R/3}(x_0) \right| & \ge c_2 \int_{\delta/2}^\delta \left| \{ d > \delta \} \cap B_{R / 3}(x_0) \right|^{(n - 1) / n} dt \\
& \ge c_3 \left| \left\{ u > \theta \right\} \cap B_{R / 3}(x_0) \right|^{(n - 1) / n},
\end{align*}
for some constant~$c_3 > 0$ depending on~$\theta$ and on universal quantities. The thesis then follows by virtue of Theorem~\ref{densestthm}.

Now we only have to deal with the more general case of~$u(x_0) \in [-\theta_0, \theta_0]$. In this scenario, we know by Theorem~\ref{densestthm} that there exist two points~$x_1, x_2 \in B_{R / 3}(x_0)$ such that~$u(x_1) > 1/2$ and~$u(x_2) < - 1/2$. By continuity, we may thus find~$\bar{x} \in B_{R/3}(x_0)$ at which~$u$ vanishes. But then, we may apply the estimate obtained above to the ball~$B_{R / 3}(\bar{x}) \subset B_R(x_0)$ to conclude the proof.
\end{proof}

\section{Energy estimates. Proof of Theorem~\ref{enestbelowthm}} \label{ensec}

In this section, we show that the bound provided by Proposition~\ref{enestprop} is sharp, in the sense that the energy of every non-trivial (i.e. different from the constant functions~$1$ and~$-1$) minimizer can be also controlled from below by a term that has the same growth in~$R$ as the right-hand side of~\eqref{enest}. That is, we prove Theorem~\ref{enestbelowthm}.
\smallskip

The argument leading to~\eqref{enestbelow} changes significantly as~$s$ takes different values in the interval~$(0, 1)$. In particular, we first establish~\eqref{enestbelow} for the case~$s > 1/2$, by inspecting the potential term of~$\E$. Then, we look at the kinetic term~$\K$ to obtain the estimate when~$s < 1/2$. Finally, a deeper analysis of the contributions coming from~$\K$ yields the desired result also for~$s = 1/2$.

We stress that such differences in the proof of~\eqref{enestbelow} as~$s$ varies in~$(0, 1)$ 
are the effect of the competition between the local potential~$\P$ and the nonlocal interaction term~$\K$.
\smallskip

The precise result that we address here is slightly stronger than Theorem~\ref{enestbelowthm} and can be stated as follows. Note that throughout the section we always implicitly assume~$K$ and~$W$ to satisfy the hypotheses listed in Theorem~\ref{enestbelowthm}.

\begin{proposition} \label{enestbelowprop}
Let~$u: \R^n \to [-1. 1]$ be a minimizer for~$\E$ in~$B_R(x_0)$, for some~$x_0 \in \R^n$ and~$R > 0$. If~$u(x_0) \in [-\theta_0, \theta_0]$, for some~$\theta_0 \in (0, 1)$, then there exist two constants~$c_0 \in (0, 1)$ and~$R_0 \ge 1$, depending only on~$\theta_0$ and on universal quantities, such that
\begin{equation} \label{enestbelowbis}
\K(u; B_R(x_0), B_R(x_0)) + \P(u; B_R(x_0)) \ge c_0 R^{n - 1} \Psi_s(R),
\end{equation}
provided~$R_0 \le R \le \xi$.
\end{proposition}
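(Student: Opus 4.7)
The plan is to split the proof into the three regimes $s \in (1/2, 1)$, $s = 1/2$, and $s \in (0, 1/2)$, mirroring the three cases in the definition of~$\Psi_s$. A common first step is to locate, inside~$B_R(x_0)$, two large sets on which~$u$ is close to the pure phases~$\pm 1$. Since the hypothesis $u(x_0) \in [-\theta_0, \theta_0]$ prevents a direct application of Theorem~\ref{densestthm} at~$x_0$, I would first exploit the uniform H\"older regularity of~$u$, as in the final paragraph of the proof of Proposition~\ref{intdensprop}, to extract two points $x_+, x_- \in B_{R/10}(x_0)$ with $u(x_+) \ge 1/2$ and $u(x_-) \le -1/2$. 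Applying Theorem~\ref{densestthm} at each of these points then produces two disjoint sets $A_\pm \subseteq B_{R/2}(x_0)$ of measure at least $\bar c R^n$ on which $u > \theta_\star$ and $u < -\theta_\star$ respectively, for a suitable threshold $\theta_\star \in (0,1)$ depending on universal quantities, as soon as $R$ exceeds a universal constant and $R \le \xi/3$.

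For $s \in (1/2, 1)$, where $\Psi_s \equiv 1$, the target lower bound is just~$R^{n-1}$, and the potential alone will suffice. Proposition~\ref{intdensprop} already yields $|\{|u| < \theta_\star\} \cap B_R(x_0)| \ge \tilde c R^{n-1}$, and combining this with $W(\cdot, u) \ge \gamma(\theta_\star)$ from~\eqref{Wgamma} on the interface region immediately gives $\P(u; B_R(x_0)) \ge \gamma(\theta_\star) \tilde c R^{n-1}$, as required. No information on the kernel beyond measurability is used here.

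For $s \in (0, 1/2)$, the potential bound falls short of the required order $R^{n - 2s}$, and one must extract a lower bound from the kinetic term. The idea is to combine $|u(x) - u(y)| \ge 2\theta_\star$ for $(x,y) \in A_+ \times A_-$ with the kernel lower bound in~\eqref{Kbounds}, whose activation condition $|x - y| \le \xi$ is guaranteed by $|x - y| \le 2R \le \xi$. Using the trivial bound $|x - y| \le 2R$ inside the kernel, the double integral reduces to $c\,|A_+||A_-|R^{-n-2s}$, which is at least $c_0 R^{n - 2s} = c_0 R^{n-1}\Psi_s(R)$ by the measure estimates on $A_\pm$.

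The $s = 1/2$ case is the main obstacle: the crude pairwise estimate just sketched only delivers $R^{n-1}$, whereas $\Psi_{1/2}(R) = \log R$ demands the sharper bound $R^{n-1} \log R$. To recover the logarithmic factor, my plan is to invoke precisely the geometric-isoperimetric inequality already used during the proof of Theorem~\ref{densestthm}, namely formula~(3.46) of~\cite{SV14}, applied with $A := A_+$, $D := A_-$ and $B := \{|u| \le \theta_\star\} \cap B_R(x_0)$. This produces an inequality of the shape $\int_{A_+}\int_{A_-} |x - y|^{-n - 1}\, dx\, dy + |B| \ge c R^{n - 1} \log R$. Bounding $|B| \le \gamma(\theta_\star)^{-1} \P(u; B_R(x_0))$ and controlling the double integral from below by $\K(u; B_R(x_0), B_R(x_0))$ via the kernel bound in~\eqref{Kbounds} will then yield~\eqref{enestbelowbis} in the critical regime as well, closing the proof.
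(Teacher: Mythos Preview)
Your treatment of the regimes $s \in (1/2, 1)$ and $s \in (0, 1/2)$ is correct and matches the paper's argument (Lemma~\ref{potestbelowlem} and Lemma~\ref{enestbelowlem}/Corollary~\ref{enestbelowcor2}). The gap is in the critical case $s = 1/2$.

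Formula~(3.46) of~\cite{SV14} is a global statement: it requires the three sets to partition $\R^n$, i.e.\ $D = \R^n \setminus (A \cup B)$. Your triple $A_+$, $A_-$, $\{|u| \le \theta_\star\} \cap B_R(x_0)$ does not do this. If you complete it to a genuine partition by enlarging $D$ to $\R^n \setminus (A_+ \cup B)$, then $D$ unavoidably contains $\{u > \theta_\star\} \setminus A_+$ --- for a planelike minimizer this is essentially a half-space --- and on that portion of $A_+ \times D$ there is no lower bound on $|u(x) - u(y)|$, so $\int_{A_+}\!\int_D |x-y|^{-n-1}\,dx\,dy$ cannot be dominated by $\K(u; B_R(x_0), B_R(x_0))$. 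Absorbing the leftover region into $B$ instead makes $|B|$ infinite and hence uncontrollable by $\P(u; B_R(x_0))$. Even restricting $A_+ \subset B_{R/2}(x_0)$ to peel off the tail $\R^n \setminus B_R(x_0)$ (as is done in the proof of Theorem~\ref{densestthm}) still leaves the interaction between $A_+$ and $\{u > \theta_\star\} \cap (B_R \setminus B_{R/2})$, which for $s = 1/2$ is in general divergent and again cannot be absorbed into the local kinetic term. In short, the inequality does not localize to yield~\eqref{enestbelowbis}.

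The paper recovers the logarithm by a different, multi-scale mechanism. One places inside $B_R(x_0)$ a cube of side $\sim R$ and, for each scale $10^\ell$ with $\ell = 1, \dots, k-1$ (where $10^k \sim R$), uses the density estimates together with the cube-counting result of Corollary~\ref{densestAcor} to find $N_\ell \gtrsim 10^{(k-\ell)(n-1)}$ disjoint sub-cubes of side $\sim 10^\ell$, each centered at a zero of $u$. Near each such cube, Lemma~\ref{enestbelowlem} produces a pair $(E_i^{(\ell)}, F_i^{(\ell)}) \subset B_R(x_0)$ with $\K(u; E_i^{(\ell)}, F_i^{(\ell)}) \gtrsim 10^{\ell(n-1)}$ and mutual separation $\sim 10^\ell$. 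This separation makes the products $E_i^{(\ell)} \times F_i^{(\ell)}$ pairwise disjoint across \emph{all} scales and indices, so that
\[
\K(u; B_R, B_R) \ \ge \ \sum_{\ell=1}^{k-1} \sum_{i=1}^{N_\ell} \K(u; E_i^{(\ell)}, F_i^{(\ell)}) \ \gtrsim \ \sum_{\ell=1}^{k-1} 10^{k(n-1)} \ \sim \ R^{n-1} \log R.
\]
All interactions used lie inside $B_R(x_0) \times B_R(x_0)$, which is precisely what~\eqref{enestbelowbis} demands.
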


Observe that~\eqref{enestbelowbis} provides a sharper estimate than~\eqref{enestbelow}, as the former does not involve the interaction term over~$B_R(x_0) \times (\R^n \setminus B_R(x_0) )$.
\smallskip

As anticipated at the beginning of the section, we remark that the method to prove~\eqref{enestbelowbis} is very sensitive
with respect to the fractional parameter~$s$. Indeed, when~$s\in(1/2,\,1)$, the proof of~\eqref{enestbelowbis} is considerably simpler than in the
other cases, and it follows essentially from Proposition~\ref{intdensprop}:
as a matter of fact, when~$s\in(1/2,\,1)$ the problem is ``sufficiently close to the local case'' that the optimal bounds are provided directly by the potential energy (which is, in a sense, of
local nature). When~$s\in (0,\,1/2]$ the situation is more complicated
and the kinetic energy plays a dominant role in the estimate.
In particular, a fine computation of the interactions
at all scales will be needed to detect the logarithmic
correction in the case~$s=1/2$.

In any case, it might be interesting to determine whether an estimate like~\eqref{enestbelowbis} holds true for both terms~$\K$ and~$\P$ separately.
\medskip

After this preliminary discussion, we now head to the proof of Proposition~\ref{enestbelowprop}. As a first result, we estimate from below the growth of the potential term.

\begin{lemma} \label{potestbelowlem}
Let~$u: \R^n \to [-1, 1]$ be a minimizer of~$\E$ in~$B_R(x_0)$, for some~$x_0 \in \R^n$ and~$R > 0$. If~$u(x_0) \in [-\theta_0, \theta_0]$, for some~$\theta_0 \in (0, 1)$, then there exist two constants~$c_1 \in (0, 1)$ and~$R_1 \ge 1$, depending on~$\theta_0$ and on universal quantities, such that
\begin{equation} \label{potestbelow}
\P(u; B_R(x_0)) \ge c_1 R^{n - 1},
\end{equation}
provided~$R_1 \le R \le \xi$.
\end{lemma}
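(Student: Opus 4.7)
The plan is to deduce \eqref{potestbelow} as an immediate corollary of the interface density estimate in Proposition~\ref{intdensprop}, combined with the nondegeneracy assumption \eqref{Wgamma} on the potential~$W$.

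First, I would fix once and for all a convenient threshold, say $\theta := 1/2$, and apply Proposition~\ref{intdensprop} with this choice of $\theta$ and with $\theta_0$ as in the hypothesis. This yields constants $\tilde{c} \in (0,1)$ and $\tilde{R} \ge 4$, depending only on $\theta_0$ and on universal quantities, such that whenever $\tilde{R} \le R \le \xi$ we have
$$
\left| \{ |u| < 1/2 \} \cap B_R(x_0) \right| \ge \tilde{c} R^{n-1}.
$$

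Next, I would exploit assumption \eqref{Wgamma}: on the set where $|u(x)| < 1/2$, the potential satisfies $W(x, u(x)) \ge \gamma(1/2) > 0$, with $\gamma(1/2)$ a universal constant since $\gamma$ is a fixed non-increasing positive function on $[0,1)$. Integrating this lower bound over $\{|u| < 1/2\} \cap B_R(x_0)$ and discarding the (nonnegative) contribution of $W$ on the complement gives
$$
\P(u; B_R(x_0)) = \int_{B_R(x_0)} W(x, u(x))\, dx \ge \gamma(1/2) \left| \{ |u| < 1/2 \} \cap B_R(x_0) \right| \ge \gamma(1/2)\,\tilde{c}\, R^{n-1}.
$$
Setting $c_1 := \gamma(1/2)\,\tilde{c}$ and $R_1 := \tilde{R}$ concludes the argument.

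The only potentially delicate point is making sure that the constants $\tilde{c}$ and $\tilde{R}$ furnished by Proposition~\ref{intdensprop} depend only on $\theta_0$ and on universal quantities once the parameter $\theta$ has been fixed to the universal value $1/2$ — but this is exactly the content of the statement of Proposition~\ref{intdensprop}. Thus no additional work beyond quoting that proposition and \eqref{Wgamma} should be required, and there is no genuine obstacle in this lemma; the nontrivial input has already been concentrated in the density/interface estimates of the previous section.
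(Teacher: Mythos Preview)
Your proposal is correct and follows essentially the same approach as the paper: combine the interface measure lower bound from Proposition~\ref{intdensprop} with the nondegeneracy hypothesis~\eqref{Wgamma} on~$W$. The only cosmetic difference is that the paper chooses the threshold~$\theta = \theta_0$ rather than your~$\theta = 1/2$, which makes no difference since the resulting constants are allowed to depend on~$\theta_0$ in either case.
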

\begin{proof}
Estimate~\eqref{potestbelow} is a simple consequence of Proposition~\ref{intdensprop} and hypothesis~\eqref{Wgamma}. Indeed,
$$
\P(u; B_R(x_0)) \ge \gamma(\theta_0) \left| \{ |u| < \theta_0 \} \cap B_R(x_0) \right| \ge c_1 R^{n - 1},
$$
for some~$c_1 > 0$ and provided~$R$ is large enough.
\end{proof}

From Lemma~\ref{potestbelowlem} we immediately deduce the validity of the bound~\eqref{enestbelowbis}, when~$s \in (1/2, 1)$. For others values of~$s$ we still get a bound from below for the total energy, which however is strictly weaker than the one claimed in Theorem~\ref{enestbelowthm}/Proposition~\ref{enestbelowprop}.

To cover the case of~$s \in (0, 1/2)$ we analyze the behavior of the interaction term~$\K$. A first computation in this direction is given by

\begin{lemma} \label{enestbelowlem}
Let~$u: B_r(x_0) \to \R$ be a measurable function, for some~$x_0 \in \R^n$ and~$r > 0$. Fix~$\theta_1 < \theta_2$ and let~$E, F \subset \R^n$ be two measurable sets on which~$u > \theta_2$ and~$u < \theta_1$, respectively. Suppose that
\begin{equation} \label{ABdens}
\min \left\{ \left| E \cap B_r(x_0) \right|, \left| F \cap B_r(x_0) \right| \right\} \ge c_\flat r^n,
\end{equation}
for some~$c_\flat > 0$. Then, there exists a constant~$c_* > 0$, depending on~$\theta_2 - \theta_1$,~$c_\flat$ and on universal quantities, for which
$$
\K(u; E \cap B_r(x_0), F \cap B_r(x_0)) \ge c_* r^{n - 2 s},
$$
provided~$2 r \le \xi$.
\end{lemma}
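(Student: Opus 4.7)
The proof should be a short, direct computation combining the lower bound on $K$ with the separation of $u$-values on $E$ and $F$.

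The plan is as follows. Since $u(x) > \theta_2$ for $x \in E$ and $u(y) < \theta_1$ for $y \in F$, for any $x \in E \cap B_r(x_0)$ and $y \in F \cap B_r(x_0)$ one has the pointwise bound
$$
|u(x) - u(y)|^2 \ge (\theta_2 - \theta_1)^2.
$$
Hence, applying the definition of $\K$ in~\eqref{kindef},
$$
\K(u; E \cap B_r(x_0), F \cap B_r(x_0)) \ge \frac{(\theta_2 - \theta_1)^2}{2} \int_{E \cap B_r(x_0)} \int_{F \cap B_r(x_0)} K(x, y) \, dx \, dy.
$$

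Next, I would use the lower bound in~\eqref{Kbounds}. For $x, y \in B_r(x_0)$, the triangle inequality gives $|x - y| \le 2 r$, and the hypothesis $2 r \le \xi$ ensures that the cutoff factor $\chi_{(0, \xi)}(|x - y|)$ equals $1$. Consequently,
$$
K(x, y) \ge \frac{\lambda}{|x - y|^{n + 2 s}} \ge \frac{\lambda}{(2 r)^{n + 2 s}},
$$
for almost every $x, y$ in the relevant domain of integration.

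Plugging this into the previous inequality and invoking the density assumption~\eqref{ABdens}, we obtain
$$
\K(u; E \cap B_r(x_0), F \cap B_r(x_0)) \ge \frac{(\theta_2 - \theta_1)^2 \lambda}{2^{n + 2 s + 1} \, r^{n + 2 s}} \left| E \cap B_r(x_0) \right| \cdot \left| F \cap B_r(x_0) \right| \ge c_* r^{n - 2 s},
$$
with $c_* := 2^{- n - 2 s - 1} (\theta_2 - \theta_1)^2 \lambda \, c_\flat^2$, which depends only on $\theta_2 - \theta_1$, on $c_\flat$ and on universal quantities (through $\lambda$). I do not expect any substantial obstacle here: the lemma is essentially a direct consequence of~\eqref{Kbounds} together with the trivial pointwise separation of $u$ on $E$ and $F$, and the role of the condition $2 r \le \xi$ is simply to activate the lower bound on $K$ on the whole product $B_r(x_0) \times B_r(x_0)$.
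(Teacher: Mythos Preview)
Your proof is correct and matches the paper's argument essentially line by line: bound $|u(x)-u(y)|^2$ below by $(\theta_2-\theta_1)^2$, use the lower bound in~\eqref{Kbounds} together with $|x-y|<2r\le\xi$ to replace $K(x,y)$ by $\lambda/(2r)^{n+2s}$, and then apply~\eqref{ABdens}. The resulting constant $c_*=\lambda(\theta_2-\theta_1)^2 c_\flat^2/2^{n+1+2s}$ is exactly the one obtained in the paper.
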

\begin{proof}
By taking advantage of~\eqref{Kbounds},~\eqref{ABdens} and the way~$E, F$ are chosen, we compute
\begin{align*}
\K(u; E \cap B_r(x_0), F \cap B_r(x_0)) & \ge \frac{\lambda}{2} \int_{E \cap B_r(x_0)} \int_{F \cap B_r(x_0)} \frac{|u(x) - u(y)|^2}{|x - y|^{n + 2 s}} \, dx dy \\
& \ge \frac{\lambda (\theta_2 - \theta_1)^2}{2 (2 r)^{n + 2 s}} \left| E \cap B_r(x_0) \right| \left| F \cap B_r(x_0) \right| \\
& \ge \frac{\lambda (\theta_2 - \theta_1)^2 c_\flat^2}{2^{n + 1 + 2 s}} \, r^{n - 2 s},
\end{align*}
if~$2 r \le \xi$. This concludes the proof.
\end{proof}

By combining this lemma with the density estimates of Theorem~\ref{densestthm}, we obtain the next corollary, which in particular establishes~\eqref{enestbelowbis} when~$s \in (0, 1/2)$.

\begin{corollary} \label{enestbelowcor2}
Let~$u: \R^n \to [-1, 1]$ be a minimizer of~$\E$ in~$B_R(x_0)$, for some~$x_0 \in \R^n$ and~$R > 0$. If~$u(x_0) \in [-\theta_0, \theta_0]$, for some~$\theta_0 \in (0, 1)$, then there exists two constants~$c_2 \in (0, 1)$ and~$R_2 \ge 3$, depending on~$\theta_0$ and on universal quantities, for which
\begin{equation} \label{enestbelow2}
\K(u; B_R(x_0), B_R(x_0)) \ge c_2 R^{n - 2 s},
\end{equation}
provided that~$R_2 \le R \le \xi$.
\end{corollary}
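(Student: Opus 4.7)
The plan is to combine the density estimates of Theorem~\ref{densestthm} with the elementary lower bound provided by Lemma~\ref{enestbelowlem}. The key observation is that, since $u(x_0)$ lies in $[-\theta_0, \theta_0]$, both halves of Theorem~\ref{densestthm} apply: the hypothesis $u(x_0) \ge -\theta_0$ lets us invoke the first density estimate (with $\theta_0$ there replaced by $-\theta_0$), while $u(x_0) \le \theta_0$ lets us invoke the second. This should give us, at an appropriate scale, simultaneous lower bounds for the measures of a superlevel set and a sublevel set of $u$ in a ball around $x_0$, which is exactly what Lemma~\ref{enestbelowlem} asks for.

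Concretely, I would fix the thresholds $\theta_2 = 1/2$ and $\theta_1 = -1/2$ and apply Theorem~\ref{densestthm} at scale $R/3$ (working at one third of $R$ absorbs the $\xi/3$ requirement in the density estimates, since we only assume $R \le \xi$). This yields universal constants $\bar{c} \in (0,1)$ and $\bar{R} \ge 2$, depending only on $\theta_0$ and universal quantities, such that whenever $R \ge 3\bar{R}$ and $R \le \xi$,
\[
\bigl| \{ u > 1/2 \} \cap B_{R/3}(x_0) \bigr| \ge \bar{c} \left( R/3 \right)^n
\quad\mbox{and}\quad
\bigl| \{ u < -1/2 \} \cap B_{R/3}(x_0) \bigr| \ge \bar{c} \left( R/3 \right)^n.
\]

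With these two density bounds in hand, I would apply Lemma~\ref{enestbelowlem} with $r := R/3$, $E := \{ u > 1/2 \}$, $F := \{ u < -1/2 \}$, and $c_\flat := \bar{c}$. The hypothesis $2 r \le \xi$ becomes $2 R/3 \le \xi$, which is implied by $R \le \xi$. The lemma then provides a constant $c_* > 0$, depending on $\theta_0$ and universal quantities, such that
\[
\K(u; E \cap B_{R/3}(x_0), F \cap B_{R/3}(x_0)) \ge c_* \left( R/3 \right)^{n - 2 s}.
\]
Finally, since the kernel $K$ is nonnegative and $B_{R/3}(x_0) \subseteq B_R(x_0)$, the left-hand side is dominated by $\K(u; B_R(x_0), B_R(x_0))$, and the conclusion \eqref{enestbelow2} follows with $c_2 := c_*/3^{n-2s}$ and $R_2 := \max \{ 3 \bar{R}, 3 \}$.

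I expect no serious obstacle in this argument: the main content is already contained in Theorem~\ref{densestthm} and Lemma~\ref{enestbelowlem}, and the only mild subtlety is bookkeeping the two different restrictions ($R \le \xi/3$ in the density estimates versus $2r \le \xi$ in Lemma~\ref{enestbelowlem}) so that the final range $R_0 \le R \le \xi$ is achieved. Working at radius $R/3$ rather than $R$ is what reconciles both constraints.
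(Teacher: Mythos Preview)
Your proposal is correct and follows essentially the same approach as the paper: apply Theorem~\ref{densestthm} to obtain density for both~$\{u>1/2\}$ and~$\{u<-1/2\}$ in a smaller concentric ball, then feed these into Lemma~\ref{enestbelowlem}. The paper uses~$r=R/2$ rather than your~$r=R/3$, but your choice actually makes the bookkeeping cleaner, since it yields exactly the constraint~$R\le\xi$ required in the statement.
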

\begin{proof}
Apply Lemma~\ref{enestbelowlem} with, say,~$r = R/2$,~$\theta_1 = - 1/2$,~$\theta_2 = 1/2$,~$E = \{ u > 1/2 \}$,~$F = \{ u < - 1 / 2 \}$. Note that condition~\eqref{ABdens} is satisfied by virtue of Theorem~\ref{densestthm}, provided that we take~$R \ge 2 \max \{ \bar{R}(1/2, \theta_0), \bar{R}(- 1/2, \theta_0) \}$.
\end{proof}

In view of Lemma~\ref{potestbelowlem} and Corollary~\ref{enestbelowcor2}, we are only left to prove~\eqref{enestbelowbis} in the case~$s = 1/2$. This task is carried out in the following lemma, where we use the bound given in~\eqref{enestbelow2} repeatedly and at different scales, to gain the desired logarithmic correction (see also~\cite[Lemma~6.7]{CP16} for a similar computation in dimension~$n = 1$; of course, the case~$n\ge2$
that we consider here provides additional geometric and analytic difficulties
and a finer estimate is needed in order to detect the optimal contribution
of all the interactions).

\begin{lemma}
Assume~$s = 1/2$. Let~$u: \R^n \to [-1, 1]$ be a minimizer of~$\E$ in~$B_R(x_0)$, for some~$x_0 \in \R^n$ and~$R > 0$. If~$u(x_0) \in [-\theta_0, \theta_0]$, for some~$\theta_0 \in (0, 1)$, then there exists two constants~$c_3 \in (0, 1)$ and~$R_3 \ge 1$, depending on~$\theta_0$ and on universal quantities, such that
\begin{equation} \label{enestbelows=1/2}
\K(u; B_R(x_0), B_R(x_0)) \ge c_3 R^{n - 1} \log R,
\end{equation}
provided~$R_3 \le R \le \xi$.
\end{lemma}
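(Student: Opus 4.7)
The plan is to apply the single-scale nonlocal energy bound of Corollary~\ref{enestbelowcor2} at a geometric sequence of scales and at many well-separated interface points per scale, arranging the pair-contributions to live on mutually disjoint subsets of $B_R(x_0) \times B_R(x_0)$ so that they can simply be summed. Concretely, fix a structural constant $L > 1$ large in terms of the density constant $\bar{c}$ of Theorem~\ref{densestthm} and a geometric ratio $D \geq 9 L$, and set $\rho_m := D^m \rho_0$ for $m = 0, 1, \ldots, M$, where $\rho_0$ is a fixed multiple of $\bar{R}$ and $M$ is the largest index with $\rho_M \leq R/(3L)$. The constraint $R_3 \leq R \leq \xi$ guarantees $\rho_M \leq \xi/3$ (so the density estimates apply at scale $\rho_M$) and $M \gtrsim \log R$.

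At each scale $m$, we first produce a family of well-separated interface points. Since $u(x_0) \in [-\theta_0, \theta_0]$, Theorem~\ref{densestthm} gives $\min\{|\{u > 0\} \cap Q|, |Q \setminus \{u > 0\}|\} \gtrsim |Q|$ on a sub-cube $Q = Q_{R/(4\sqrt{n})}(x_0) \subset B_R(x_0)$. Applying Corollary~\ref{densestAcor} to $A = \{u > 0\}$ with subdivision parameter $k_m \sim R/(L \rho_m)$ yields $N_m \gtrsim (R/\rho_m)^{n-1}$ non-overlapping cubes of side $L \rho_m$ centered at interface points $z_{m, j} \in \partial\{u > 0\} \cap Q$, where $u(z_{m, j}) = 0$ by continuity. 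Non-overlap of these cubes guarantees that the concentric balls $B_{L \rho_m / 2}(z_{m, j})$ are pairwise disjoint and contained in $B_R(x_0)$.

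For each $z = z_{m, j}$, Theorem~\ref{densestthm} applied at $z$ (with $u(z) = 0$) provides $|E \cap B_{\rho_m/8}(z)| \geq \bar{c}(\rho_m/8)^n$ and $|F \cap B_{L \rho_m/2}(z)| \geq \bar{c}(L\rho_m/2)^n$, where $E := \{u > 1/2\}$ and $F := \{u < -1/2\}$. Fixing $L$ so large that $\bar{c}(L/2)^n > 2 |B_1|/4^n$ permits removing $B_{\rho_m/4}(z)$ from the outer ball while retaining $|F \cap (B_{L\rho_m/2}(z) \setminus B_{\rho_m/4}(z))| \gtrsim \rho_m^n$. Every pair $(x, y)$ with $x$ in the first set and $y$ in the second has $|u(x) - u(y)|^2 \geq 1$ and $|x - y| \in [\rho_m/8, L\rho_m]$, so by the lower bound in~\eqref{Kbounds} with $s = 1/2$ the contribution to $\K$ from this pair-set is at least
$$
\frac{\lambda \cdot c \, \rho_m^{2n}}{(L\rho_m)^{n+1}} \gtrsim \rho_m^{n-1}.
$$
Summing over the $N_m$ disjoint interface balls yields a scale-$m$ total of $\gtrsim R^{n-1}$.

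The choice $D \geq 9L$ makes the distance intervals $[\rho_m/8, L\rho_m]$ pairwise disjoint across $m$; combined with the pairwise disjointness of the balls at each fixed $m$, this ensures that the scale-by-scale contributions live on mutually disjoint pair-sets in $B_R(x_0) \times B_R(x_0)$ and therefore add directly into
$$
\K(u; B_R(x_0), B_R(x_0)) \gtrsim M R^{n-1} \gtrsim R^{n-1} \log R.
$$
The main obstacle is precisely this scale separation: a naive summation of the single-scale bound $\K(u; B_r, B_r) \gtrsim r^{n-1}$ would double-count pairs, since the pairs inside a small ball are also integrated by every larger ball containing it. The resolution is the two-radius device (inner ball of radius $\rho_m/8$ for $E$, outer annulus of inner radius $\rho_m/4$ and outer radius $L\rho_m/2$ for $F$), which confines each scale's pair-contribution to the dyadic band $[\rho_m/8, L\rho_m]$ and makes the geometric gap $D$ between consecutive scales sufficient to decouple them.
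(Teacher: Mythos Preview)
Your proposal is correct and follows essentially the same strategy as the paper: a geometric sequence of scales, Corollary~\ref{densestAcor} at each scale to locate $\gtrsim (R/\rho_m)^{n-1}$ interface points, density estimates at those points to produce pair sets with $|u(x)-u(y)|\ge 1$, and disjointness of the resulting product sets in $B_R(x_0)\times B_R(x_0)$ via separation of distance ranges across scales together with disjointness of the first factors at a fixed scale. The only noteworthy variation is cosmetic: where the paper pairs each interface ball $B_i^{(\ell)}$ with a \emph{satellite} ball $\widetilde{B}_i^{(\ell)}$ at distance $2R_3 10^\ell$ and splits into three cases according to the sign of $u$ on $\widetilde{B}_i^{(\ell)}$, you instead use an inner ball and a concentric annulus, which avoids the case analysis but requires the extra step of choosing $L$ large so that removing $B_{\rho_m/4}(z)$ does not kill the density bound for $F$. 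Both devices serve the identical purpose of forcing $|x-y|$ into a dyadic band, and the summation and counting are the same.
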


\begin{proof}
Fix~$R_3 \ge \max \{ \bar{R}(0, \theta_0), \bar{R}(1/2, 0), \bar{R}(-1/2,0) \}$, with~$\bar{R}$ as given by Theorem~\ref{densestthm}, and take~$R \ge R_3$. Let~$k$ be the largest integer such that~$\sqrt{n} R_3 10^k \le R$. Notice that, for this choice, we have~$Q_{R_3 10^k}(x_0) \subset B_{R/2}(x_0)$ and
\begin{equation} \label{kNest}
R < \sqrt{n} R_3 10^{k + 1}.
\end{equation}
Note that we may suppose without loss of generality that~$k \ge 2$, since otherwise~\eqref{enestbelows=1/2} would immediately follow from Corollary~\ref{enestbelowcor2}.

Let~$\ell \in \{ 1, \ldots, k - 1 \}$. We claim that there exist two families of sets~$\{ E_i^{(\ell)} \}_{i = 1}^{N_\ell}, \{ F_i^{(\ell)} \}_{i = 1}^{N_\ell} \subset B_R(x_0)$, such that~$E_i^{(\ell)} \cap E_j^{(\ell)} = \emptyset$ for any~$i \ne j$,
\begin{equation} \label{EFdistclaim}
\dist \left( E_i^{(\ell)}, F_i^{(\ell)} \right) \ge \frac{R_3}{2} 10^\ell, \quad \sup_{x \in E_i^{(\ell)}, \, y \in F_i^{(\ell)}} |x - y| \le \frac{7 R_3}{2} 10^\ell,
\end{equation}
\begin{equation} \label{Kbelowestclaim}
\K(u; E_i^{(\ell)}, F_i^{(\ell)}) \ge c_\circ 10^{\ell (n - 1)},
\end{equation}
for any~$i$, and~$N_\ell \ge c_\circ 10^{(k - \ell) (n - 1)}$, for some constant~$c_\circ \in (0, 1)$ independent of~$k$,~$\ell$ and~$i$.

To prove the claim, we first use Theorem~\ref{densestthm} to deduce that
$$
\min\Big\{
| \{ u > 0 \} \cap Q_{R_3 10^k}(x_0) |, \;
| \{ u < 0 \} \cap Q_{R_3 10^k}(x_0) | \Big\}\ge c_\sharp 10^{n k},
$$
for some~$c_\sharp \in (0,1)$ independent of~$k$. Then, Corollary~\ref{densestAcor} yields the existence of a family~$\{ Q_i^{(\ell)} \}_{i = 1}^{N_\ell} \subset B_R(x_0)$ of~$N_\ell$ non-overlapping cubes with sides of length~$R_3 10^\ell$, each centered at a point at which~$u$ vanishes and such that~$N_\ell \ge c_{\star \star} 10^{(k - \ell) (n - 1)}$, for some~$c_{\star \star} \in (0,1)$ independent of~$k$ and~$\ell$.

For any~$i$, denote with~$B_i^{(\ell)} \subset Q_i^{(\ell)}$ the ball of radius~$R_3 10^\ell / 2$ concentric to~$Q_i^{(\ell)}$. Then, consider another ball~$\widetilde{B}_i^{(\ell)} \subset B_R(x_0)$ of radius~$R_3 10^\ell / 2$ and centered at any point at distance~$2 R_3 10^\ell$ from the center of~$B_i^{(\ell)}$. It holds that either
\begin{enumerate}[$(i)$]
\item $\widetilde{B}_i^{(\ell)} \subset \{ u > 0 \}$, or
\item $\widetilde{B}_i^{(\ell)} \subset \{ u < 0 \}$, or
\item $u(\widetilde{x}) = 0$ at some point~$\widetilde{x} \in \widetilde{B}_i^{(\ell)}$.
\end{enumerate}
In case~$(i)$ we set
$E_i^{(\ell)} := \{ u < -1/2 \} \cap B_i^{(\ell)}$, while in 
case~$(ii)$,~$E_i^{(\ell)} := \{ u > 1/2 \} \cap B_i^{(\ell)}$; in both 
cases we set~$F_i^{(\ell)} := \widetilde{B}_i^{(\ell)}$. On the other 
hand, in case~$(iii)$ we slightly translate~$\widetilde{B}_i^{(\ell)}$ 
(along a vector of length at most~$R_3 10^\ell / 2$) to make it centered 
at~$\widetilde{x}$ and set
\begin{eqnarray*}
&& E_i^{(\ell)} := \{ u < -1/2 \} \cap 
B_i^{(\ell)}\\
{\mbox{and }}&& F_i^{(\ell)} := \{ u > 1/2 \} \cap 
\widetilde{B}_i^{(\ell)}.\end{eqnarray*}
In any case, we employ Lemma~\ref{enestbelowlem} in combination with Theorem~\ref{densestthm} to see that~\eqref{Kbelowestclaim} is true. Inequalities~\eqref{EFdistclaim} also hold by construction.

The claim is therefore proved. In view of this,
$$
\left( E_i^{(\ell)} \times F_i^{(\ell)} \right) \cap \left( E_j^{(m)} \times F_j^{(m)} \right) = \emptyset,
$$
for any~$i, \in \{ 1, \ldots, N_\ell \}$,~$j \in \{ 1, \ldots, N_m \}$ and~$\ell, m \in \{ 1, \ldots, k - 1 \}$ such that~$(i, \ell) \ne (j, m)$. Accordingly, we can sum up each contribution coming from~\eqref{Kbelowestclaim} and obtain that
\begin{align*}
\K(u; B_R(x_0), B_R(x_0)) & \ge \sum_{\ell = 1}^{k - 1} \sum_{i = 1}^{N_\ell} \K(u; E_i^{(\ell)}, F_i^{(\ell)}) \ge c_\circ \sum_{\ell = 1}^{k - 1} N_\ell 10^{\ell(n - 1)} \\
& \ge c_\circ^2 \sum_{\ell = 1}^{k - 1} 10^{k(n - 1)} \ge \frac{c_\circ^2}{2} 10^{k(n - 1)} k.
\end{align*}
Recalling now~\eqref{kNest} and possibly enlarging~$R_3$, estimate~\eqref{enestbelows=1/2} plainly follows.
\end{proof}

The proof of Proposition~\ref{enestbelowprop} (and, consequently, of Theorem~\ref{enestbelowthm}) is therefore concluded.
\smallskip

As a consequence of Proposition~\ref{enestbelowprop}, we deduce a~\emph{clean ball condition} for the sub- and superlevel sets of the minimizers of~$\E$, that improves Theorem~\ref{densestthm}.

\begin{proposition} \label{cleanballprop}
Let~$u: \R^n \to [-1, 1]$ be a minimizer of~$\E$ in~$B_R(x_0)$, for some~$x_0 \in \R^n$ and~$R > 0$. Fix~$\theta, \theta_0 \in (0, 1)$ and a parameter~$\delta \in (0, 1]$. If~$u(x_0) \in [- \theta_0, \theta_0]$, then there exists two constants~$\kappa \in (0, 1]$,~$\widehat{R} > 0$ and two points~$z_1, z_2 \in \R^n$ such that
\begin{equation} \label{cleanball}
B_{\kappa R}(z_1) \subseteq \left\{ u > \theta \right\} \cap B_R(x_0)
\quad \mbox{and} \quad
B_{\kappa R}(z_2) \subseteq \left\{ u < -\theta \right\} \cap B_R(x_0),
\end{equation}
provided~$R \ge \widehat{R}$ and~$\delta \kappa R \le \xi$. The constants~$\kappa$ and~$\widehat{R}$ only depend on~$\delta$,~$\theta$,~$\theta_0$ and on universal quantities.
\end{proposition}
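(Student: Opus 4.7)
The plan is to argue by contradiction, combining the density estimate of Theorem~\ref{densestthm} with the sharp upper and lower energy bounds given respectively by Propositions~\ref{enestprop} and~\ref{enestbelowprop}. Replacing $u$ with $-u$ interchanges the conclusions for $z_1$ and $z_2$, so it suffices to construct $B_{\kappa R}(z_1) \subseteq \{u > \theta\} \cap B_R(x_0)$; suppose, towards a contradiction, that no such ball exists, for some $\kappa \in (0, 1]$ to be pinned down at the end.

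I would first fix the auxiliary threshold $\theta'' := (1 + \theta)/2 \in (\theta, 1)$ and invoke Theorem~\ref{densestthm} at $x_0$ with parameter $\theta''$, obtaining a density estimate of the form $|\{u > \theta''\} \cap B_\rho(x_0)| \ge c_1 \rho^n$ at a scale $\rho$ chosen within $[\bar R(\theta'', \theta_0), \xi/3]$. Next I would pick a maximal $L \kappa R$-packing $\{z_j\}_{j=1}^N$ centered near $x_0$, with $L$ a dimensional constant (say $L = 20$) large enough that the enlarged balls $B_{L \kappa R}(z_j)$ cover the region of interest with bounded multiplicity; this produces $N \ge c_2 \kappa^{-n}$ and $B_{\kappa R}(z_j) \subset B_R(x_0)$ for every $j$. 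By the absurd hypothesis, every $B_{\kappa R}(z_j)$ meets $\{u \le \theta\}$, while a pigeonhole argument combined with the above density bound extracts a subfamily of at least $c_3 \kappa^{-n}$ indices for which also $|\{u > \theta''\} \cap B_{L \kappa R}(z_j)| > 0$. Since $B_{L \kappa R}(z_j)$ is connected, the uniform H\"{o}lder continuity of minimizers (see~\cite[Section~2]{CV17}) then forces $u$ to attain the intermediate value $\bar \theta := (\theta + \theta'')/2$ at some point $p_j \in B_{L \kappa R}(z_j)$, with $u(p_j) \in [-\theta_0', \theta_0']$ for $\theta_0' := \bar \theta \in (0, 1)$ depending only on $\theta$.

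At this stage the lower bound of Proposition~\ref{enestbelowprop} applies at $p_j$ on the ball $B_r(p_j)$ with $r := \delta \kappa R$, as soon as $R$ is large enough that $r \ge R_0(\theta_0')$ and the hypothesis $\delta \kappa R \le \xi$ is in force; this gives
\[
\K(u; B_r(p_j), B_r(p_j)) + \P(u; B_r(p_j)) \ge c_0 \, r^{n-1} \Psi_s(r).
\]
Since the balls $B_r(p_j) \subset B_{(L + \delta) \kappa R}(z_j)$ inherit bounded multiplicity from the packing, summing over the selected indices and comparing with the upper bound $\E(u; B_R(x_0)) \le C R^{n-1} \Psi_s(R)$ provided by Proposition~\ref{enestprop} leads, after elementary algebra, to
\[
c_5 \, \delta^{n-1} \kappa^{-1} \Psi_s(\delta \kappa R) \le C \, \Psi_s(R).
\]
Inspecting the ratio $\Psi_s(\delta \kappa R)/\Psi_s(R)$ separately in the three regimes $s \in (0, 1/2)$, $s = 1/2$ and $s \in (1/2, 1)$, one extracts from this inequality a strictly positive lower bound on $\kappa$ depending only on $\delta$, $\theta$, $\theta_0$ and universal constants; fixing the initially free $\kappa$ strictly below this threshold closes the contradiction.

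The main obstacle I foresee is the step producing the intermediate points $p_j$: one has to pass from the purely measure-theoretic fact that $\{u > \theta''\}$ is present inside $B_{L \kappa R}(z_j)$ to the pointwise existence of a value of $u$ bounded away from $\pm 1$ at a specific point, so that Proposition~\ref{enestbelowprop} can be genuinely invoked with a $\theta_0'$ depending only on $\theta$. The uniform-in-$R$ H\"{o}lder regularity of minimizers takes care of this, while the flexibility in the parameter $\delta$ absorbs the residual geometric constants needed to match the constraint $\delta \kappa R \le \xi$ appearing in the statement and to harmonise the different scales at which density and energy estimates are applied.
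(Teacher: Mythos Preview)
Your strategy is correct and matches the paper's: cover~$B_R(x_0)$ at scale~$\kappa R$, use the density estimate of Theorem~\ref{densestthm} to count a large number (of order~$\kappa^{-n}$) of cells carrying an intermediate point, apply the lower energy bound of Proposition~\ref{enestbelowprop} on a ball of radius~$\delta\kappa R$ around each such point, sum, and confront the result with the upper bound of Proposition~\ref{enestprop} to force a lower bound on~$\kappa$.

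The one noteworthy difference is in how the intermediate points are produced, and in fact the paper's route bypasses the obstacle you flag. Rather than packing all of~$B_R(x_0)$ and then using an auxiliary threshold~$\theta''$ together with the intermediate value theorem, the paper works with a cubic grid and \emph{first} selects only those sub-cubes that meet~$\{u>\theta\}$ (the density estimate gives~$\gtrsim \kappa^{-n}$ of them); the dichotomy is then simply: either some selected cube avoids~$\{|u|\le\theta\}$, whence by continuity it lies entirely in~$\{u>\theta\}$ and we are done, or every selected cube contains a point~$y_j$ with~$|u(y_j)|\le\theta$, which is exactly the hypothesis needed to invoke Proposition~\ref{enestbelowprop} with~$\theta_0=\theta$. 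This avoids the extra level~$\theta''$, the pigeonhole step, and any appeal to H\"older regularity beyond bare continuity. Your version works as well, but the paper's selection is a little cleaner and makes the bookkeeping of constants (and the handling of the~$s=1/2$ logarithm via the auxiliary constraint~$\delta\kappa R\ge\sqrt{R}$) more transparent.
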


\begin{proof}
We restrict ourselves to the proof of the first inclusion in~\eqref{cleanball}, the other one being completely analogous.

Set~$\kappa := 1/(2 \sqrt{n} N)$, for some~$N \in \N$ to be later determined. Let~$Q$ be a cube centered at~$x_0$, of sides~$R / \sqrt{n}$. By construction,~$Q \subset B_{R / 2}(x_0)$. Subdivide~$Q$ into a family~$\{ Q_j \}_{j = 1}^{N^n}$ of non-overlapping cubes of sides~$R/(\sqrt{n} N) = 2 \kappa R$ parallel to those of~$Q$.

Let~$\QQ$ be the family of cubes~$Q_j$ having non-empty intersection with the level set~$\{ u > \theta \}$ and denote with~$\widetilde{N} \in \N \cup \{ 0 \}$ the cardinality of~$\QQ$. We relabel the cubes belonging to~$\QQ$ in such a way that
$$
\QQ = \left\{ \widetilde{Q}_j : j = 1, \ldots, \widetilde{N} \right\}.
$$
We claim that there exist two constants~$\widetilde{R} > 0$ and~$\tilde{c} \in (0, 1)$, depending on~$\theta$,~$\theta_0$ and on universal quantities, such that
\begin{equation} \label{tildeN}
\widetilde{N} \ge \tilde{c} N^n,
\end{equation}
provided
\begin{equation} \label{RNcond1}
R \ge \widetilde{R}.
\end{equation}
To see this, we consider the ball~$\widetilde{B} = B_{R / (2 \sqrt{n})}(x_0) \subset Q$ and we apply to it the density estimate~\eqref{densest1}. Setting~$\widetilde{R} := 2 \sqrt{n} \bar{R}$, with~$\bar{R} = \bar{R}(\theta, - \theta_0)$ as in Theorem~\ref{densestthm}, by~\eqref{RNcond1} we compute
$$
\bar{c} \left( \frac{R}{2 \sqrt{n}} \right)^n \le \left| \{ u > \theta \} \cap \widetilde{B} \right| = \sum_{j = 1}^{\widetilde{N}} \left| \widetilde{Q}_j \cap \{ u > \theta \} \cap \widetilde{B} \right| \le \sum_{j = 1}^{\widetilde{N}} \left| \widetilde{Q}_j \right| = \widetilde{N} \left( \frac{R}{\sqrt{n} N} \right)^n,
$$
which leads to~\eqref{tildeN}.

Now, either
\begin{enumerate}[$(i)$]
\item there exists~$j = 1, \ldots, \widetilde{N}$ such that~$\widetilde{Q}_j \cap \{ |u| \le \theta \} = \varnothing$, or
\item for any~$j = 1, \ldots, \widetilde{N}$, there exists~$y_j \in \widetilde{Q}_j$ at which~$|u(y_j)| \le \theta$.
\end{enumerate}
We claim that the latter possibility cannot occur, at least if~$N$ and~$R$ are sufficiently large, in dependence of~$\theta$,~$\theta_0$ and universal quantities only. If this is the case, condition~$(i)$ would then be valid. By the way in which the family~$\QQ$ is chosen and the continuity of~$u$, we might then conclude that the first assertion in~\eqref{cleanball} holds true.

By contradiction, suppose~$(ii)$ to be in force. By reducing, if necessary, the number~$\widetilde{N}$ of cubes in~$\QQ$ by a factor~$3^n$ and changing the position of the~$y_j$, we may assume without loss of generality that each~$\widetilde{Q}_j$ is centered at~$y_j$. Let now~$B_j^{(\delta)} \subset \widetilde{Q}_j$ be the ball of radius~$\delta R / (2 \sqrt{n} N)$ centered at~$y_j$. In view of~\eqref{tildeN} and Proposition~\ref{enestbelowprop}, we estimate
\begin{equation} \label{enestbelowtech}
\begin{aligned}
\E(u; B_R(x_0)) & \ge \sum_{j = 1}^{\widetilde{N}} \left[ \K(u; B_j^{(\delta)}, B_j^{(\delta)}) + \P(u; B_j^{(\delta)}) \right] \\
& \ge c_0 \widetilde{N} \left( \frac{\delta R}{2 \sqrt{n} N} \right)^{n - 1} \Psi_s \left( \frac{\delta R}{2 \sqrt{n} N} \right) \\
& \ge c_1 N \delta^{n - 1} R^{n - 1} \Psi_s \left( \frac{\delta R}{N} \right),
\end{aligned}
\end{equation}
for some~$c_0, c_1 \in (0, 1)$ depending only on~$\theta$ and on universal quantities. This is true provided
\begin{equation} \label{RNcond2}
R_* \le \frac{\delta R}{N} \le 2 \sqrt{n} \xi,
\end{equation}
with~$R_* \ge 1$ depending only on universal quantities and~$\theta$. On the other hand, with the aid of Proposition~\ref{enestprop} we estimate
\begin{equation} \label{enestabovetech}
\E(u; B_R(x_0)) \le C_2 R^{n - 1} \Psi_s(R),
\end{equation}
for some universal~$C_2 \ge 1$, if
\begin{equation} \label{RNcond3}
R \ge 3.
\end{equation}

By comparing the two estimates~\eqref{enestbelowtech} and~\eqref{enestabovetech}, we find that
\begin{equation} \label{RNcond4}
N \le \bar{C}_\delta := \begin{dcases}
C_\star^{\frac{1}{2 s}} \delta^{1 - \frac{n}{2s}} & \quad \mbox{if } s < 1/2, \\
2 C_\star \delta^{1 - n} & \quad \mbox{if } s = 1/2, \\
C_\star \delta^{1 - n} & \quad \mbox{if } s > 1/2,
\end{dcases}
\qquad \mbox{with } C_\star := \frac{C_2}{c_1},
\end{equation}
under the further restriction
\begin{equation} \label{RNcond5}
\frac{N}{\delta} \le \sqrt{R}, \quad \mbox{when } s = 1/2.
\end{equation}
But then we reached a contradiction, as we can choose~$N$ and~$\widehat{R}$ in such a way that conditions~\eqref{RNcond1},~\eqref{RNcond2},~\eqref{RNcond3} and~\eqref{RNcond5} are satisfied, but not~\eqref{RNcond4}. For instance, we may take
$$
N := \lfloor \bar{C}_\delta \rfloor + 2 \quad \mbox{and} \quad \widehat{R} := \left\{ 3, \widetilde{R}, N R_* / \delta, (N / \delta)^2 \right\},
$$
and any~$R \ge \widehat{R}$ such that~$\delta R / N \le 2 \sqrt{n} \xi$.
\end{proof}

\section{Planelike minimizers of~$\E$. Proof of Theorem~\ref{tauPLthm}} \label{PLminsec}

After having established in Proposition~\ref{cleanballprop} the appropriate clean ball condition for our problem, the proof of Theorem~\ref{tauPLthm} follows now the strategy exploited in~\cite{V04} and~\cite{CV17}. In particular, we shall follow the latter reference closely and only point out the most important differences in the argument.
\medskip

First, we restrict to consider a periodicity~$\tau$ larger than a big constant~$\tau_0$, chosen in dependence of~$\theta$ and universal quantities only. Note that this assumption does not harm the generality of our framework. Indeed, one can deal with a periodicity~$1 \le \tau \le \tau_0$ by scaling down and reducing it to the case of periodicity~$1$ treated in~\cite{CV17}. More specifically, we take~$\tau > \tau_0$, with~$\tau_0$ equal to twice the constant~$\widehat{R}$ of Proposition~\ref{cleanballprop} (applied with~$\delta = 1 / \sqrt{n}$ and~$\theta_0 = \theta$).

We only consider the case of a \emph{rational} direction~$\omega \in \tau \Q^n \setminus \{ 0 \}$ and a kernel~$K$ with rapid decay at infinity, i.e. that satisfies
$$
K(x, y) \le \frac{\Lambda_0}{|x - y|^{n + \beta}} \quad \mbox{for a.a.~} x, y \in \R^n \mbox{ such that } |x - y| \ge \rho_0, \mbox{ with } \beta > 1,
$$
for some~$\Lambda_0, \rho_0 > 0$. The general case can be then dealt with via approximation arguments analogous to those presented in Subsection~4.7 and Section~5 of~\cite{CV17}, respectively.

Under these additional assumptions, for~$M > 0$ we define the set of periodic, locally~$L^2$ functions
$$
L^2_\loc(\tRn) := \Big\{ u \in L^2_\loc(\R^n) : u \mbox{ is periodic with respect to~$\sim$} \Big\},
$$
the class of admissible functions
$$
\A := \left\{ u \in L^2_\loc(\tRn) : u(x) \ge \theta \mbox{ if } \omega \cdot x \le 0 \mbox{ and } u(x) \le - \theta \mbox{ if } \omega \cdot x \ge M \right\},
$$
the auxiliary functional
\begin{align*}
\F(u) := & \, \K(u; \tRn, \R^n) + \P(u; \tRn) \\
= & \, \frac{1}{2} \int_{\tRn} \int_{\R^n} |u(x) - u(y)|^2 K(x, y) \, dx dy + \int_{\tRn} W(x, u(x)) \, dx,
\end{align*}
and the set of absolute minimizers
$$
\M := \Big\{ u \in \A : \F(u) \le \F(v) \mbox{ for all } v \in \A \Big\}.
$$

We have

\begin{proposition} \label{uMexprop}
There exists a particular minimizer~$\u \in \M$ that satisfies the following properties:
\begin{enumerate}[$(i)$]
\item $\u$ is the~(unique)~\emph{minimal minimizer} of~$\M$, that is
$$
\u(x) = \inf_{u \in \M} u(x) \quad \mbox{for a.a.~} x \in \R^n,
$$
at least in the sense of~\cite[Definition~4.2.1]{CV17};
\item $\u$ is a minimizer of~$\E$ in every bounded open set~$\Omega$ compactly contained in the strip
$$
\S_\omega^M := \Big\{ x \in \R^n : \omega \cdot x \in [0, M] \Big\};
$$
\item $\u$ has the~\emph{doubling property}, i.e. it coincides with the minimal minimizers corresponding to the weaker equivalence relations~$\sim_m$, defined for any~$m \in \N^{n - 1}$ by setting
$$
x \sim_m y \quad \mbox{iff} \quad x - y \mbox{ belongs to the lattice generated by } m_1 \vec{z}_1, \ldots, m_{n - 1} \vec{z}_{n - 1},
$$
where~$\vec{z}_1, \ldots, \vec{z}_{n - 1} \in \tau \Z^n \setminus \{ 0 \}$ are vectors orthogonal to~$\omega$, forming a basis for the lattice induced by~$\sim$ (see~\cite[Subsection~4.3]{CV17} for more details);
\item the superlevel~[sublevel] sets of~$\u$ enjoy the~\emph{$\tau$-Birkhoff property} with respect to direction~$\omega$~[$-\omega$], that is
\begin{align*}
\left\{ \pm \u > \eta \right\} + k & \subseteq \left\{ \pm \u > \eta \right\} \mbox{ for any } k \in \tau \Z^n \mbox{ such that } \pm \omega \cdot k \le 0, \mbox{ and} \\
\left\{ \pm \u > \eta \right\} + k & \supseteq \left\{ \pm \u > \eta \right\} \mbox{ for any } k \in \tau \Z^n \mbox{ such that } \pm \omega \cdot k \ge 0,
\end{align*}
for any~$\eta \in \R$.
\end{enumerate}
\end{proposition}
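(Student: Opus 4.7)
The plan is to follow the strategy developed in \cite[Section~4]{CV17} for the case $\tau=1$, adapting the construction to the present periodicity $\tau$. The starting point is to verify that the admissible class $\A$ is non-empty and that $\F$ admits at least one minimizer on it. For non-emptiness one takes, for instance, a smooth interpolation between the two pure phases across $\S_\omega^M$, extended $\sim$-periodically; the rapid decay of $K$ at infinity, together with the $\sim$-periodicity that reduces the spatial integration to a compact fundamental domain transverse to $\omega$, ensures that $\F$ is finite on this candidate. The direct method then produces a minimizer: $L^2_\loc$-compactness follows from periodicity, while Fatou's lemma gives the required lower semicontinuity of $\K$ and $\P$.

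The next step is the construction of the minimal minimizer. This rests on the standard lattice property
\[
\F(u) + \F(v) \ge \F(u \wedge v) + \F(u \vee v),
\]
which is a consequence of the pointwise inequality $|a-b|^2 + |a'-b'|^2 \ge |(a \wedge a')-(b \wedge b')|^2 + |(a \vee a')-(b \vee b')|^2$ combined with the identity $W(x,u)+W(x,v)=W(x,u \wedge v)+W(x,u \vee v)$. Since the boundary conditions defining $\A$ are also preserved by $\wedge$ and $\vee$, the class $\M$ is closed under these operations. One may then define $\u(x) := \inf_{v \in \M} v(x)$, interpreting the infimum through a countable dense subfamily and using the uniform pointwise bound on admissible functions (which, after truncation, may be assumed to lie in $[-1,1]$); this yields property~(i).

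Properties (ii)--(iv) are then extracted by further exploiting the lattice structure. For (ii), any compactly supported perturbation of $\u$ inside $\S_\omega^M$ admits a $\sim$-periodic extension lying in $\A$, and minimality transfers to the localized energy $\E$. For (iii), the minimal minimizer $\u_m$ relative to the coarser relation $\sim_m$ dominates $\u$ from below by admissibility, while the pointwise infimum of the translates of $\u_m$ over a set of representatives of $\sim/\sim_m$ produces a $\sim$-periodic minimizer that in turn dominates $\u_m$ from below, forcing the two functions to coincide. For the $\tau$-Birkhoff property (iv), one considers the translate $\u(\cdot + k)$ for $k \in \tau\Z^n$ with $\omega \cdot k \le 0$ and checks that $\u \wedge \u(\cdot + k) \in \A$: for $\omega \cdot x \le 0$ one has $\omega \cdot (x+k) \le 0$ and hence $\u(x+k) \ge \theta$, while for $\omega \cdot x \ge M$ the bound $\u(x) \le -\theta$ alone suffices since we take the minimum. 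Minimality of $\u$ as the pointwise infimum over $\M$ then forces $\u \wedge \u(\cdot + k) = \u$, that is $\u \le \u(\cdot + k)$, which translates to the claimed superlevel set inclusion; the sublevel case is dual.

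The main obstacle is the Birkhoff property~(iv), where the admissibility check under $\wedge$ and $\vee$ operations with translates of $\u$ is delicate and must be performed separately for each of the two boundary conditions of the strip. Since the $\tau$-periodicity assumption is structurally identical to the $1$-periodicity case treated in \cite{CV17}, no conceptually new ingredient is required; the proof amounts to replaying the arguments of \cite{CV17} while keeping track of the scale $\tau$, whose role in this proposition is confined to the definition of $\sim$ and of the underlying lattice $\tau\Z^n$.
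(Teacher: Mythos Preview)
Your approach is exactly the one the paper takes: it simply states that Proposition~\ref{uMexprop} ``can be proved by repeating the arguments of~\cite[Subsections~4.1--4.5]{CV17}'', the only difference being the replacement of~$\Z^n$ by~$\tau\Z^n$. Your sketch of those arguments is accurate in outline.

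There is, however, one gap in your treatment of~(iv). You verify that~$\u \wedge \u(\cdot+k) \in \A$ when~$\omega\cdot k\le 0$, and then invoke the characterization of~$\u$ as the pointwise infimum over~$\M$ to conclude~$\u \le \u(\cdot+k)$. But this requires~$\u \wedge \u(\cdot+k)\in\M$, not merely~$\in\A$, and your earlier remark that~``$\M$ is closed under~$\wedge$ and~$\vee$'' does not apply here because~$\u(\cdot+k)\notin\A$ in general: its upper constraint is only satisfied on~$\{\omega\cdot x\ge M-\omega\cdot k\}\subsetneq\{\omega\cdot x\ge M\}$. The standard fix is to introduce the translated admissible class~$\A_{-k}$ (constraints shifted by~$-\omega\cdot k$), observe that~$\u\vee\u(\cdot+k)\in\A_{-k}$, and use the translation invariance~$\F(\u(\cdot+k))=\F(\u)=\min_{\A}\F=\min_{\A_{-k}}\F$. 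The lattice inequality
\[
2\F(\u)=\F(\u)+\F(\u(\cdot+k))\ge \F(\u\wedge\u(\cdot+k))+\F(\u\vee\u(\cdot+k))\ge \min_{\A}\F+\min_{\A_{-k}}\F=2\F(\u)
\]
then forces equality throughout, so~$\u\wedge\u(\cdot+k)\in\M$ and the conclusion follows. You seem aware that the admissibility check is ``delicate'', but the written argument stops short of the step that actually closes it.
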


Proposition~\ref{uMexprop} can be proved by repeating the arguments of~\cite[Subsections~4.1-4.5]{CV17}. The differences are purely formal and are due to the fact that here the equivalence relation~$\sim$ is chosen coherently with the~$\tau \Z^n$-periodicity of~$K$ and~$W$ (which were instead~$\Z^n$-periodic in~\cite{CV17}).

To conclude the proof of Theorem~\ref{tauPLthm}, we now need to show that for large, universal values of~$M / (|\omega| \tau)$, the minimal minimizer~$\u$ is~\emph{unconstrained}, i.e. that~$\u$ is a minimizer for~$\E$ in~\emph{any} bounded open subset~$\Omega$ of~$\R^n$ (compare this with point~$(ii)$ of Proposition~\ref{uMexprop}). The essential step in this direction is given by

\begin{proposition} \label{disttauprop}
There exists a constant~$M_0 > 0$, depending only on~$\theta$ and on universal quantities, such that if~$M \ge M_0 |\omega| \tau$, then the superlevel set~$\{ \u > - \theta \}$ is at least at distance~$\tau$ from the upper constraint~$\{ \omega \cdot x = M \}$ delimiting~$\S_\omega^M$.
\end{proposition}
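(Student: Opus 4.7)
Argue by contradiction. Suppose the thesis fails and fix $x_\ast \in \R^n$ with $\u(x_\ast) > -\theta$ and $\omega \cdot x_\ast > M - \tau |\omega|$. The plan is to construct $N \gtrsim M/(|\omega|\tau)$ pairwise disjoint balls $B_R(y_i)$, compactly contained in $\S_\omega^M$ (hence in the fundamental domain $\tRn$), of radius $R = c_\flat \tau$ for a small universal constant $c_\flat > 0$, and each centered at a point $y_i$ with $\u(y_i) \in [-\theta, \theta]$. Summing the lower bounds of Proposition~\ref{enestbelowprop} applied at the $y_i$'s and comparing with a universal upper bound for $\F(\u)$ coming from a one-dimensional transition competitor will then produce the contradiction.

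The crucial and most delicate step is the construction of the interior intermediate-value points $y_i$. Starting from $x_\ast$, the downward $\tau$-Birkhoff property of $\{\u > -\theta\}$ (Proposition~\ref{uMexprop}(iv)) yields $\u(x_\ast - k) > -\theta$ for every $k \in \tau\Z^n$ with $\omega \cdot k \ge 0$; combined with the H\"older continuity of $\u$ (cf.~\cite[Section~2]{CV17}), this propagates $\{\u > -\theta\}$ to a non-empty intersection with every slice $\{\omega \cdot y = h\}$ at height $h \in [0, \omega \cdot x_\ast]$. Symmetrically, the upper constraint $\u \le -\theta$ on $\{\omega \cdot y \ge M\}$ combined with the Birkhoff property for $\{\u < \theta\}$ shows that $\{\u < \theta\}$ intersects every slice at height $h \in [0, M]$. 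Since the slice at height $h$ is connected (being a copy of the torus $T^{n-1}$ in the quotient $\tRn$) and $\u$ is continuous on each slice, the intermediate value theorem furnishes, for every $h \in [R|\omega|, M - R|\omega|]$, a point $y_h$ at height $h$ with $\u(y_h) \in [-\theta, \theta]$. Choosing heights $h_1 < \cdots < h_N$ spaced at least $3R|\omega|$ apart then yields $N \ge M/(3 c_\flat |\omega| \tau) - 1$ disjoint balls $B_R(y_i)$; for $c_\flat$ small they fit inside $\tRn$ and are compactly contained in $\S_\omega^M$.

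Since $\u$ is an unconstrained minimizer of $\E$ in each $B_R(y_i)$ (by Proposition~\ref{uMexprop}(ii)), Proposition~\ref{enestbelowprop} applied at each $y_i$ gives $\K(\u; B_R(y_i), B_R(y_i)) + \P(\u; B_R(y_i)) \ge c_1 \tau^{n-1} \Psi_s(\tau)$ for a universal $c_1 > 0$ and for $\tau$ larger than a universal threshold; summing over the disjoint balls, $\F(\u) \ge N c_1 \tau^{n-1} \Psi_s(\tau)$. On the other hand, the one-dimensional competitor $w(x) := \phi(\omega \cdot x / |\omega|)$, where $\phi: \R \to [-1, 1]$ is a fixed smooth monotone profile equal to $1$ on $(-\infty, 0]$ and to $-1$ on $[1, +\infty)$, lies in $\A$ (since $M \ge M_0 |\omega| \tau \ge |\omega|$), and the rapid-decay assumption on $K$ in force throughout this part of the argument gives $\F(w) \le C_2 \tau^{n-1}$ for a universal $C_2 > 0$. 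By the minimality of $\u$, $\F(\u) \le \F(w) \le C_2 \tau^{n-1}$, whence $N \le C_2 / (c_1 \Psi_s(\tau)) \le C_2 / c_1$ using that $\Psi_s(\tau) \ge 1$ for $\tau$ large. Together with the lower bound $N \ge M/(3 c_\flat |\omega| \tau) - 1$, this forces $M \le M_0 |\omega| \tau$ for a universal $M_0$, contradicting the assumption on $M$ once $M_0$ is chosen strictly larger than this value.
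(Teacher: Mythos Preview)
Your energy–comparison strategy is natural and is indeed the route taken in~\cite{CdlL01,V04}, but as written it has two genuine gaps, and the paper's proof follows a different (and shorter) path.

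\textbf{First gap: the upper bound $\F(w)\le C_2\tau^{n-1}$ is not universal.}
The functional $\F$ is an integral over the fundamental domain $\tRn=\R^n/\!\sim$. The cross–section of $\tRn$ orthogonal to $\omega$ is an $(n-1)$–torus whose volume $V_\omega$ depends on the lattice $\{k\in\tau\Z^n:\omega\cdot k=0\}$ and is \emph{not} controlled by $\tau^{n-1}$: already for $n=2$, $\tau=1$, $\omega=(1,N)$ one has $V_\omega\sim N$. For the one–dimensional profile $w$ the potential term alone gives $\P(w;\tRn)\sim V_\omega$, so the correct bound is $\F(w)\lesssim V_\omega$. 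With this, your comparison yields only
\[
\frac{M}{|\omega|\tau}\ \lesssim\ \frac{V_\omega}{\tau^{n-1}\Psi_s(\tau)},
\]
which does not give a universal $M_0$. The fix in \cite{V04} is to place on each height–slice not one ball but $\sim V_\omega/\tau^{n-1}$ disjoint balls (so that $V_\omega$ cancels), but your IVT argument produces a single point per slice and does not supply this.

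\textbf{Second gap: the slice argument is not justified.}
You assert that Birkhoff propagates $\{\u<\theta\}$ from the upper constraint down to every slice at height $h\in[0,M]$. But by Proposition~\ref{uMexprop}(iv), sublevel sets satisfy $\{\u<\theta\}+k\subseteq\{\u<\theta\}$ only for $\omega\cdot k\ge 0$, i.e.\ they propagate \emph{upward}, not downward. So from $\{\omega\cdot x\ge M\}$ you reach heights $\ge M$, not the interior of $\S_\omega^M$. Likewise, the H\"older bound does not turn the discrete set of Birkhoff translates $x_\ast-k$ into intermediate–value points on \emph{every} slice, since the radius of the ball where $\u>-\theta$ around such a translate depends on the uncontrolled quantity $\u(x_\ast-k)+\theta$.

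\textbf{How the paper proceeds instead.}
The paper avoids the global energy comparison altogether. It first shows (via the clean ball condition, Proposition~\ref{cleanballprop}, which already encapsulates the sharp upper/lower energy estimates inside a \emph{single} large ball) that there is a ball $B$ of radius $\sqrt{n}\,\tau$ contained in $\S_\omega^M$ on which $|\u|>\theta$. Continuity then gives either $\u>\theta$ or $\u<-\theta$ on all of $B$. Since $B$ contains a full fundamental cube for $\tau\Z^n$, the Birkhoff property upgrades this to a half–space; the option $\u>\theta$ on a half–space $\{\omega\cdot x<t_1\}$ with $t_1>\sqrt{n}\,|\omega|\tau$ is ruled out by minimality of the \emph{minimal} minimizer, so $\u<-\theta$ on $\{\omega\cdot x>t_2\}$ with $t_2<M-\sqrt{n}\,|\omega|\tau$, which is the claim. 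No reference to $\F(w)$ or to $V_\omega$ is needed.
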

\begin{proof}
First, we claim that
\begin{equation} \label{cleanballtauclaim}
\mbox{there exists a ball } B \mbox{ of radius } \sqrt{n} \tau \mbox{ contained in } \S_\omega^M, \mbox{ such that } |\u| > \theta \mbox{ on } B,
\end{equation}
if~$M \ge M_0 |\omega| \tau$, for some~$M_0 > 0$.

To prove claim~\eqref{cleanballtauclaim}, we fix~$\bar{x} \in \S_\omega^M$ in such a way that~$\omega \cdot \bar{x} = M / 2$. Consider the ball~$B_{\sqrt{n} \tau}(\bar{x})$, which is itself contained in~$\S_\omega^M$, provided~$M > 2 \sqrt{n} |\omega| \tau$. Now, either
\begin{enumerate}[$(i)$]
\item $|\u| > \theta$ on~$B_{\sqrt{n} \tau}(\bar{x})$, or
\item there exists~$x_0 \in B_{\sqrt{n} \tau}(\bar{x})$ at which~$|\u(x_0)| \le \theta$.
\end{enumerate}
In case~$(i)$, we clearly have a ball~$B$ as desired, and~\eqref{cleanballtauclaim} follows. Thus, we suppose that~$(ii)$ is valid.

Consider a large ball~$B_R(x_0)$, for~$R > 0$, and observe that~$B_R(x_0) \subset \subset \S_\omega^M$, if we take~$M \ge 4 (\sqrt{n} \tau + R) |\omega|$. Observe that, by point~$(ii)$ of Proposition~\ref{uMexprop},~$\u$ is a minimizer of~$\E$ in~$B_R(x_0)$. Therefore, we may apply Proposition~\ref{cleanballprop} (with~$\delta = 1 / \sqrt{n}$ and~$\theta_0 = \theta$) to find a ball~$\widetilde{B} \subset B_R(x_0)$ of radius~$\kappa R$,~$\kappa \in (0, 1]$, on which, say,~$\u < - \theta$, provided~$R \ge \widehat{R}$ and~$\kappa R \le \sqrt{n} \tau$.\footnote{Note that the resulting interval for~$R$ is not the empty set, in view of the assumptions we made on~$\tau$ at the beginning of the section.} By choosing exactly~$\kappa R = \sqrt{n} \tau$ we are led once again to~\eqref{cleanballtauclaim}, at least if~$M \ge 4 \sqrt{n} (1 + \kappa^{-1}) |\omega| \tau$.

Claim~\eqref{cleanballtauclaim} being proved, the rest of the proof follows in a more or less straightforward way. In view of the continuity of~$\u$ (see e.g.~\cite{C17} or~\cite[Section~2]{CV17}), we see that either~$\u > \theta$ on~$B$ or~$\u < - \theta$ on~$B$. By a general result on Birkhoff sets (an appropriately scaled version of~\cite[Proposition~4.5.2]{CV17}, for instance) and point~$(iv)$ of Proposition~\ref{uMexprop}, it then follows that either~$\u > \theta$ on a half-space of the form~$\{ \omega \cdot x < t_1 \}$, with~$t_1 > \sqrt{n} |\omega| \tau$, or that~$\u < - \theta$ on a half-space of the form~$\{ \omega \cdot x > t_2 \}$, with~$t_2 < M - \sqrt{n} |\omega| \tau$. As it is not hard to see, the former possibility leads to a contradiction,~$\u$ being the minimal minimizer. Consequently, the latter conclusion is true and the proposition is proved.
\end{proof}

Thanks to Proposition~\ref{disttauprop}, we see that~$\u$ starts attaining values smaller than~$-\theta$ well before meeting the upper constraint, once~$M$ is chosen sufficiently large (in relation to~$\tau$ and~$|\omega|$). By this fact and the minimality properties of~$\u$, it is not hard to see that the following stronger statement holds true.

\begin{proposition}
Let~$M \ge M_0 |\omega| \tau$, with~$M_0$ as in Proposition~\ref{disttauprop}. Then,~$\u = u_\omega^{M + a}$, for any~$a \ge 0$.
\end{proposition}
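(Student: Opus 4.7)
The plan is to use Proposition~\ref{disttauprop} to show that, for small increments $a \in [0, |\omega|\tau]$, the minimal minimizer $u_\omega^{M+a}$ sits comfortably inside the narrower admissible class $\A$, and then to exploit the minimal minimizer property in both strips to deduce equality; arbitrary $a \geq 0$ will then follow by iteration. First, I would apply Proposition~\ref{disttauprop} to $u_\omega^{M+a}$ (permissible, since $M+a \geq M \geq M_0|\omega|\tau$) and translate the Euclidean distance bound on $\{u_\omega^{M+a} > -\theta\}$ into the projected inequality
\begin{equation*}
u_\omega^{M+a}(x) \leq -\theta \qquad \text{whenever } \omega \cdot x > M + a - |\omega|\tau.
\end{equation*}
For $a \leq |\omega|\tau$ this gives $u_\omega^{M+a} \leq -\theta$ on the half-space $\{\omega \cdot x \geq M\}$, which, combined with the lower constraint $u_\omega^{M+a} \geq \theta$ on $\{\omega \cdot x \leq 0\}$ inherited from $\mathcal{A}_\omega^{M+a}$, places $u_\omega^{M+a}$ inside $\A$.

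Next, I would note the trivial inclusion $\A \subseteq \mathcal{A}_\omega^{M+a}$ and run a symmetric minimality argument: on one hand, $\F(\u) \leq \F(u_\omega^{M+a})$ because $\u$ minimizes $\F$ over $\A$ and $u_\omega^{M+a} \in \A$ by the previous step; on the other, $\F(u_\omega^{M+a}) \leq \F(\u)$ because $u_\omega^{M+a}$ minimizes $\F$ over $\mathcal{A}_\omega^{M+a}$ and $\u \in \mathcal{A}_\omega^{M+a}$ trivially. Consequently the two energies coincide, each of the two functions is an $\F$-minimizer in both $\A$ and $\mathcal{A}_\omega^{M+a}$, and so $u_\omega^{M+a} \in \M$ and $\u \in \mathcal{M}_\omega^{M+a}$. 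Invoking the pointwise characterization of the minimal minimizer from point $(i)$ of Proposition~\ref{uMexprop} in both strips then forces simultaneously $\u \leq u_\omega^{M+a}$ and $u_\omega^{M+a} \leq \u$ a.e., so $\u = u_\omega^{M+a}$ for every $a \in [0, |\omega|\tau]$.

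Finally, for arbitrary $a \geq 0$ I would iterate: the preceding identity remains valid when $M$ is replaced by any $M' \geq M_0|\omega|\tau$, so chaining finitely many increments of size $|\omega|\tau$ (plus one last sub-$|\omega|\tau$ step to cover the remainder) extends the equality to any $a \geq 0$. The only mildly delicate step is the geometric translation of the Euclidean distance estimate from Proposition~\ref{disttauprop} into the projected bound on $\omega \cdot x$, which relies on the fact that the superlevel set automatically lies in the half-space $\{\omega \cdot x \leq M+a\}$; the rest of the argument is a bookkeeping exercise in keeping track of the admissible classes and invoking the pointwise uniqueness of the minimal minimizer.
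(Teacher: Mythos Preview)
Your proposal is correct and follows essentially the same route as the argument the paper defers to (namely~\cite[Corollary~4.6.2]{CV17}): use Proposition~\ref{disttauprop} to show that~$u_\omega^{M+a}$ satisfies the tighter upper constraint and hence lies in~$\A$, compare energies in both admissible classes, and then invoke the pointwise minimality characterization of Proposition~\ref{uMexprop}$(i)$ in each class to force equality; the extension to all~$a \ge 0$ by iteration in steps of size~$|\omega|\tau$ is exactly the intended mechanism.
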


We refer to the arguments of~\cite[Corollary~4.6.2]{CV17} for a detailed proof of this result.

We remark that the above proposition shows in particular that~$\u$ is a minimizer of~$\E$ in each bounded subset of the half-space~$\{ \omega \cdot x > 0 \}$ (recall point~$(ii)$ of Proposition~\ref{uMexprop}). By exploiting another time the fact that~$\u$ is a minimal minimizer, it can be proved that its minimizing properties do not stop at the lower constraint~$\{ \omega \cdot x = 0 \}$, but in fact extends to the whole space~$\R^n$. We refer once again to the arguments displayed in~\cite[Subsection~4.6]{CV17} for more details on this.

Consequently,~$\u$ is a class~A minimizer and the proof of Theorem~\ref{tauPLthm} is concluded.

\section{The~$\Gamma$-convergence result. Proof of Proposition~\ref{Gammaconvprop}} \label{Gammasec}

The proof is an obvious modification of the argument contained in~\cite[Section~2]{SV12}. We report it here for the sake of completeness.
\medskip

We first establish the~$\Gamma$-liminf inequality claimed in~$(i)$. To this aim, note that we may assume
$$
\ell := \liminf_{\varepsilon \rightarrow 0^+} \E_\varepsilon(u_\varepsilon; \Omega) < +\infty.
$$
Let~$u_{\varepsilon_j}$ be a subsequence attaining the liminf above. Up to extracting a further subsequence, we may also assume that~$u_{\varepsilon_j}$ converges to~$u$ a.e.~in~$\R^n$. In view of this,
$$
\lim_{j \rightarrow +\infty} \frac{1}{\varepsilon_j^{2 s}} \int_{\Omega} W(x, u_{\varepsilon_j}(x)) \, dx \le \ell,
$$
that is, by the continuity of~$W$ and Fatou's lemma,
$$
\int_{\Omega} W(x, u(x)) \, dx = \lim_{j \rightarrow +\infty} \int_{\Omega} W(x, u_{\varepsilon_j}(x)) \, dx = 0.
$$
By~\eqref{Wgamma}, we then deduce that~$u$ only takes the values~$1$ and~$-1$ and, hence, may be written in~$\Omega$ as~$u = \chi_E - \chi_{\R^n \setminus E}$, for some measurable set~$E$. Accordingly, using again Fatou's lemma,
$$
\G(u; \Omega) = \K(u; \Omega) \le \lim_{j \rightarrow +\infty} \K(u_{\varepsilon_j}; \Omega) \le \ell,
$$
and the inequality in~$(i)$ holds true.

On the other hand, the proof of~$(ii)$ is almost immediate. We simply select~$u_\varepsilon = u$ as recovery sequence. Of course we may restrict ourselves to suppose~$\G(u; \Omega) < +\infty$ and consequently deduce that~$u = \chi_E - \chi_{\R^n \setminus E}$ in~$\Omega$, for some measurable set~$E$. As, by~\eqref{Wzeros},~$W(x, u(x)) = 0$ for a.a.~$x \in \Omega$, we get
$$
\G(u; \Omega) = \K(u; \Omega) = \E_\varepsilon(u; \Omega) = \E_\varepsilon(u_\varepsilon; \Omega), \quad \mbox{for any } \varepsilon > 0,
$$
and the thesis plainly follows.

The proof of Proposition~\ref{Gammaconvprop} is therefore concluded.

\section{Planelike minimal surfaces for~$\mbox{\normalfont Per}_K$. Proof of Theorem~\ref{PerPLthm}} \label{PerPLsec}

In this conclusive section, we obtain planelike class~A minimal surfaces for the~$K$-perimeter as limits in~$\varepsilon \rightarrow 0^+$ of the minimizers constructed in Theorem~\ref{epsPLthm}. That is, we prove Theorem~\ref{PerPLthm}.

Note that, in contrast with the previous sections, here we always consider~$s \in (0, 1/2)$.

For the sake of clarity, we also stress that, while~$K$ is obviously assumed throughout the section to satisfy the hypotheses explicitly stated in Theorem~\ref{PerPLthm}, the functional~$\E_\varepsilon$ that we often consider has to be intended as given by any~$W$ fulfilling requirements~\eqref{Wzeros}-\eqref{Wper}.
\medskip

As a first step towards the proof of Theorem~\ref{epsPLthm}, we need to have uniform-in-$\varepsilon$ versions of the density and energy estimates for the functional~$\E_\varepsilon$. Such results are reported in the next two propositions.

\begin{proposition} \label{epsenestprop}
%Let~$n \ge 1$,~$s \in (0, 1)$,~$x_0 \in \R^n$ and~$R \ge 3$. Assume that~$K$ and~$W$ satisfy~\eqref{Ksymmetry},~\eqref{Kbounds} and~\eqref{Wbound},~\eqref{Wzeros}, respectively.
Let~$\varepsilon \in (0, 1),$~$x_0 \in \R^n$ and~$R \ge 3 \varepsilon$. If~$u_\varepsilon: \R^n \to [-1, 1]$ is a minimizer of~$\E_\varepsilon$ in~$B_{R + 2 \varepsilon}(x_0)$, then
$$
\E_\varepsilon(u_\varepsilon; B_R(x_0)) \le C R^{n - 2 s},
$$
for some universal constant~$C \ge 1$.
\end{proposition}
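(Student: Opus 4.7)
The plan is to reduce the claim to the already-proved energy estimate of Proposition~\ref{enestprop} via the natural rescaling~$\mathcal{R}_\varepsilon$ defined in~\eqref{Reps}. Concretely, given a minimizer~$u_\varepsilon$ of~$\E_\varepsilon$ in~$B_{R+2\varepsilon}(x_0)$, I would set
$$
v(y) := u_\varepsilon(\varepsilon y), \qquad \widetilde{K}(x, y) := \varepsilon^{n+2s} K(\varepsilon x, \varepsilon y), \qquad \widetilde{W}(x, r) := W(\varepsilon x, r).
$$
A direct change of variables in~\eqref{Eepsdef} shows that, for every bounded open set~$\Omega \subset \R^n$,
$$
\E_\varepsilon(u_\varepsilon; \Omega) = \varepsilon^{n-2s} \, \E_{\widetilde{K}, \widetilde{W}}(v; \Omega/\varepsilon),
$$
where~$\E_{\widetilde K, \widetilde W}$ denotes the (unscaled) functional~\eqref{Edef} built from~$\widetilde K$ and~$\widetilde W$. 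In particular, the minimality of~$u_\varepsilon$ in~$B_{R+2\varepsilon}(x_0)$ transfers into minimality of~$v$ in~$B_{R/\varepsilon + 2}(x_0/\varepsilon)$ for~$\E_{\widetilde K, \widetilde W}$.

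Next, I would check that the rescaled data still satisfy the hypotheses needed to invoke Proposition~\ref{enestprop}. The symmetry~\eqref{Ksymmetry} is preserved by construction, and a straightforward computation using~\eqref{Kbounds} for~$K$ yields the same two-sided bound for~$\widetilde K$ with the very same constants~$\lambda$ and~$\Lambda$ (only~$\xi$ gets replaced by~$\xi/\varepsilon$, which plays no role in Proposition~\ref{enestprop}). Similarly,~$\widetilde W$ vanishes at~$\pm 1$ and is bounded by~$\kappa^{-1}$ together with its derivative, so~\eqref{Wzeros} and~\eqref{Wbound} hold with the same constant.

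The hypothesis~$R \ge 3\varepsilon$ translates precisely into~$R/\varepsilon \ge 3$, so Proposition~\ref{enestprop} applies to~$v$ on the ball~$B_{R/\varepsilon}(x_0/\varepsilon)$ and gives
$$
\E_{\widetilde{K}, \widetilde{W}}(v; B_{R/\varepsilon}(x_0/\varepsilon)) \le C \left( \frac{R}{\varepsilon} \right)^{n-1} \Psi_s\!\left( \frac{R}{\varepsilon} \right),
$$
with~$C$ depending only on~$n$,~$s$,~$\Lambda$ and~$\kappa$. Since we are in the regime~$s \in (0, 1/2)$, the definition~\eqref{Psidef} gives~$\Psi_s(t) = t^{1-2s}$, so the right-hand side equals~$C(R/\varepsilon)^{n-2s}$. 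Multiplying through by~$\varepsilon^{n-2s}$ and using the scaling identity above, one reads off
$$
\E_\varepsilon(u_\varepsilon; B_R(x_0)) = \varepsilon^{n-2s} \, \E_{\widetilde K, \widetilde W}(v; B_{R/\varepsilon}(x_0/\varepsilon)) \le C R^{n-2s},
$$
as required. There is no real obstacle here: the proof is essentially a bookkeeping exercise in verifying that the rescaling~$u \mapsto u(\varepsilon \,\cdot\,)$ converts~$\E_\varepsilon$ into an unscaled energy~$\E_{\widetilde K, \widetilde W}$ of the form~\eqref{Edef}, and that the class of admissible kernels and potentials is closed under this operation with universal constants. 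The only mildly subtle point is confirming that the factor~$\varepsilon^{n+2s}$ in front of~$K$ is exactly the right scaling to both compensate the Jacobians coming from the double integral and preserve the homogeneous bound~$|x-y|^{-n-2s}$; the specific form of the factor~$1/\varepsilon^{2s}$ in~\eqref{Eepsdef} is tailored precisely for this purpose.
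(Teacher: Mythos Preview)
Your proposal is correct and follows precisely the route the paper indicates: the paper does not give a detailed proof but simply remarks that the statement ``can be easily deduced from \ldots Proposition~\ref{enestprop} \ldots by using~$\varepsilon$-rescalings of the form~\eqref{Reps}'', which is exactly the change of variables~$v(y)=u_\varepsilon(\varepsilon y)$ you carry out. Your verification that~$\widetilde K$ and~$\widetilde W$ retain the structural constants, that minimality in~$B_{R+2\varepsilon}(x_0)$ becomes minimality in~$B_{R/\varepsilon+2}(x_0/\varepsilon)$, and that the ambient restriction~$s\in(0,1/2)$ turns~$\Psi_s(R/\varepsilon)$ into~$(R/\varepsilon)^{1-2s}$ fills in all the details the paper leaves implicit.
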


\begin{proposition} \label{epsdensestprop}
Let~$\varepsilon \in (0, 1)$,~$x_0 \in \R^n$ and~$R > 0$. Let~$u_\varepsilon: \R^n \to [-1, 1]$ be a minimizer of~$\E_\varepsilon$ in~$B_R(x_0)$. Fix~$\theta, \theta_0 \in (-1, 1)$. If~$u_\varepsilon(x_0) \ge \theta_0$, then there exist two constants~$c \in (0, 1)$, depending on universal quantities, and~$\bar{R} > 0$, that may also depend on~$\theta$ and~$\theta_0$, such that
$$
\left| \left\{ u_\varepsilon > \theta \right\} \cap B_R(x_0) \right| \ge c R^n,
$$
provided~$\bar{R} \varepsilon \le R \le \xi/3$. Similarly, if~$u_\varepsilon(x_0) \le \theta_0$, then
$$
\left| \left\{ u_\varepsilon < \theta \right\} \cap B_R(x_0) \right| \ge c R^n,
$$
provided~$\bar{R} \varepsilon \le R \le \xi/3$.
\end{proposition}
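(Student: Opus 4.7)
The plan is to reduce Proposition~\ref{epsdensestprop} to Theorem~\ref{densestthm} by a rescaling argument, following the spirit of the transformation~\eqref{Reps}. Given a minimizer~$u_\varepsilon$ of~$\E_\varepsilon$ in~$B_R(x_0)$, I would set~$v(x) := u_\varepsilon(\varepsilon x)$ and work on the rescaled ball~$B_{R/\varepsilon}(x_0/\varepsilon)$.

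The key computation is to verify that~$v$ is a minimizer, in the sense of Definition~\ref{mindef}, of the (unscaled) energy~$\widetilde{\E}$ built from the scaled kernel
\begin{equation*}
\widetilde{K}(x', y') := \varepsilon^{n + 2 s} K(\varepsilon x', \varepsilon y')
\end{equation*}
and the scaled potential~$\widetilde{W}(x', r) := W(\varepsilon x', r)$. A direct change of variables in the definition of~$\E_\varepsilon$ yields the identity
\begin{equation*}
\E_\varepsilon(u_\varepsilon; \Omega) \;=\; \varepsilon^{n - 2 s} \, \widetilde{\E}(v; \Omega/\varepsilon),
\end{equation*}
valid for every measurable~$\Omega \subseteq \R^n$. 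Since the map~$w \mapsto w(\varepsilon \,\cdot\,)$ is a bijection on admissible competitors that fixes boundary data outside~$\Omega/\varepsilon$, minimality of~$u_\varepsilon$ in~$B_R(x_0)$ is equivalent to minimality of~$v$ in~$B_{R/\varepsilon}(x_0/\varepsilon)$ for~$\widetilde{\E}$.

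Next I would check that~$\widetilde{K}$ and~$\widetilde{W}$ satisfy the structural hypotheses of Theorem~\ref{densestthm}. By construction, \eqref{Ksymmetry}, \eqref{Kbounds} and (when relevant) \eqref{Kreg} hold for~$\widetilde{K}$ with the \emph{same} constants~$\lambda$,~$\Lambda$,~$\Gamma$,~$\nu$, the only modification being that the parameter~$\xi$ in~\eqref{Kbounds} gets replaced by~$\xi/\varepsilon$. Likewise, all of~\eqref{Wzeros}--\eqref{W''} carry over to~$\widetilde{W}$ with the same~$\kappa$ and~$\gamma$. Consequently, the constants~$\bar{c}$ and~$\bar{R}$ produced by Theorem~\ref{densestthm} for~$\widetilde{\E}$ coincide with those for~$\E$, depending on universal quantities (and on~$\theta,\theta_0$) in exactly the same way.

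Since~$v(x_0/\varepsilon) = u_\varepsilon(x_0) \ge \theta_0$, Theorem~\ref{densestthm} applied to~$v$ gives
\begin{equation*}
\bigl| \{ v > \theta \} \cap B_{R/\varepsilon}(x_0/\varepsilon) \bigr| \;\ge\; \bar{c} \left( \frac{R}{\varepsilon} \right)^{\! n},
\end{equation*}
provided~$\bar{R} \le R/\varepsilon \le (\xi/\varepsilon)/3$, i.e.~$\bar{R}\,\varepsilon \le R \le \xi/3$. Undoing the change of variables,
\begin{equation*}
\bigl| \{ u_\varepsilon > \theta \} \cap B_R(x_0) \bigr| \;=\; \varepsilon^n \bigl| \{ v > \theta \} \cap B_{R/\varepsilon}(x_0/\varepsilon) \bigr| \;\ge\; \bar{c} R^n,
\end{equation*}
which is the desired bound with~$c = \bar{c}$. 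The dual estimate under the assumption~$u_\varepsilon(x_0) \le \theta_0$ is obtained in the same manner, or by applying the result just proved to~$-u_\varepsilon$ (with the potential~$W(x, -r)$, which still satisfies the standing assumptions). There is no substantial obstacle here: the only delicate point is the bookkeeping of how the parameter~$\xi$ (and only~$\xi$) transforms under the rescaling, since it is precisely this transformation that explains the condition~$\bar{R}\,\varepsilon \le R \le \xi/3$ in the statement.
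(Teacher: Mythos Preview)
Your proposal is correct and follows exactly the approach indicated in the paper, which simply states that the result is deduced from Theorem~\ref{densestthm} via~$\varepsilon$-rescalings of the form~\eqref{Reps}. Your write-up supplies the bookkeeping the paper omits: the energy identity~$\E_\varepsilon(u_\varepsilon;\Omega)=\varepsilon^{n-2s}\widetilde{\E}(v;\Omega/\varepsilon)$, the observation that~$\widetilde K$ and~$\widetilde W$ satisfy the structural hypotheses with the same constants except for~$\xi\mapsto\xi/\varepsilon$, and the way this last transformation produces the range~$\bar R\,\varepsilon\le R\le\xi/3$.
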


Both statements can be easily deduced from Theorem~\ref{densestthm} and Proposition~\ref{enestprop}, respectively, by using~$\varepsilon$-rescalings of the form~\eqref{Reps}.
\smallskip

Next is a simple remark on the convergence of sequences having equibounded energies.

\begin{lemma} \label{directlem}
Let~$\Omega \subset \R^n$ be an open bounded set with Lipschitz boundary and~$\{ \varepsilon_j \}$ an infinitesimal sequence of positive numbers. Let~$\{ u_j \} \subset \X$ be a sequence of functions such that~$\E_{\varepsilon_j}(u_j; \Omega)$ is bounded uniformly in~$j$. Then, there exists a subsequence~$\{ j_k \}$ such that, as~$k \rightarrow +\infty$,
$$
u_{j_k} \longrightarrow u := \chi_{E} - \chi_{\R^n \setminus E} \quad \mbox{in } L^1(\Omega),
$$
for some measurable set~$E \subseteq \Omega$.
\end{lemma}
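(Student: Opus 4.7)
The plan is to combine the two pieces of the bound on $\E_{\varepsilon_j}(u_j; \Omega)$ in complementary ways: the potential part will force any subsequential limit to take only the values $\pm 1$, while the kinetic part, together with the $L^\infty$ bound built into the definition of $\X$, will supply the strong compactness needed to extract such a subsequential limit.

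First, I would note that the uniform bound $\E_{\varepsilon_j}(u_j;\Omega) \le C$ gives, in particular,
\[
\int_\Omega W(x, u_j(x)) \, dx \le C \varepsilon_j^{2 s} \longrightarrow 0 \qquad \mbox{as } j \to +\infty,
\]
and also $\K(u_j;\Omega) \le C$. Since $\|u_j\|_{L^\infty(\R^n)}\le 1$, the finite-difference squared is bounded by $4$, so I can split
\[
\iint_{\Omega\times\Omega} \frac{|u_j(x)-u_j(y)|^2}{|x-y|^{n+2s}} \, dx \, dy = \iint_{|x-y|<\xi} (\cdots) + \iint_{|x-y|\ge \xi}(\cdots),
\]
control the first integral by $2 \lambda^{-1}\K(u_j;\Omega)$ through the lower bound in~\eqref{Kbounds}, and bound the second by $4|\Omega|\cdot n|B_1|\xi^{-2s}/(2s)$. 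This yields a uniform estimate on the Gagliardo $H^s$-seminorm of $u_j$ on $\Omega$.

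Next, since $\Omega$ is bounded with Lipschitz boundary, the classical fractional Rellich--Kondrachov embedding $W^{s,2}(\Omega) \hookrightarrow\hookrightarrow L^2(\Omega)$ gives a subsequence $\{u_{j_k}\}$ converging strongly in $L^2(\Omega)$ to some function $u\in L^2(\Omega)$ with $|u|\le 1$ a.e. Up to a further subsequence, I may assume $u_{j_k}\to u$ a.e.~in $\Omega$, and consequently also in $L^1(\Omega)$ by dominated convergence (using $|u_{j_k}|\le 1$).

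Finally, I would promote this $L^1$ limit to a characteristic-function limit. By~\eqref{Wbound}, $W$ is bounded by $\kappa^{-1}$; by its continuity in the second variable and the a.e.~convergence $u_{j_k}\to u$, dominated convergence gives
\[
\int_\Omega W(x, u(x)) \, dx = \lim_{k\to+\infty} \int_\Omega W(x, u_{j_k}(x)) \, dx = 0.
\]
Assumption~\eqref{Wgamma} then implies $|\{x\in\Omega: |u(x)|\le \theta\}|\le \gamma(\theta)^{-1}\int_\Omega W(x,u) \, dx = 0$ for every $\theta\in(0,1)$, so $|u|=1$ a.e.~in $\Omega$. Setting $E := \{x\in\Omega : u(x)=1\}$ and redefining $u$ to equal $-1$ on $\R^n\setminus\Omega$ (which is harmless for $L^1(\Omega)$-convergence), we obtain $u = \chi_E - \chi_{\R^n\setminus E}$, concluding the proof. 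No step is really an obstacle: the only subtlety is the tail control in the kinetic term, handled using the uniform $L^\infty$ bound that is built into the ambient space $\X$.
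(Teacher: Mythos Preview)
Your proof is correct and follows essentially the same approach as the paper: both bound the Gagliardo $H^s$-seminorm on $\Omega$ by splitting the double integral according to whether $|x-y|<\xi$ (using the lower bound in~\eqref{Kbounds}) or $|x-y|\ge\xi$ (using $|u_j|\le 1$), then invoke compact embedding, and finally pass to the limit in the potential term to force $|u|=1$ a.e. The only cosmetic differences are that the paper cites~\cite[Corollary~7.2]{DPV12} directly for $L^1$ compactness rather than going through $L^2$, and uses Fatou's lemma instead of dominated convergence for the potential term.
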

\begin{proof}
Recalling~\eqref{Kbounds} and the fact that~$|u_j| \le 1$, we estimate
\begin{align*}
[u_j]_{H^s(\Omega)}^2 & \le \int_{\Omega} \left[ \frac{1}{\lambda} \int_{B_\tau(x)} \left| u_j(x) - u_j(y) \right|^2 K(x, y) \, dy + 4 \int_{\R^n \setminus B_\tau(x)} \frac{dy}{|x - y|^{n + 2 s}} \right] dx  \\
& \le \frac{2}{\lambda} \, \E_{\varepsilon_j}(u_j; \Omega) + \frac{2 n |B_1| |\Omega|}{s \tau^{2 s}}.
\end{align*}
The equiboundedness of the energies allows us to apply e.g.~\cite[Corollary~7.2]{DPV12} and deduce the existence of a subsequence~$u_{j_k}$ converging to some~$u$ in~$L^1(\Omega)$. Also, by using again the equiboundedness of~$\{ \E_j(u_{\varepsilon_j}; \Omega) \}$ and Fatou's lemma, we have
$$
\int_{\Omega} W(x, u(x)) \, dx \le \liminf_{k \rightarrow +\infty} \int_{\Omega} W(x, u_{j_k}(x)) \, dx \le \liminf_{k \rightarrow +\infty} \varepsilon_{j_k}^{2 s} \, \E_{\varepsilon_{j_k}}(u_{j_k}; \Omega) = 0.
$$
By~\eqref{Wzeros}, the function~$u$ only takes values~$1$ and~$-1$, i.e.~$u = \chi_E - \chi_{\R^n \setminus E}$, for some~$E \subseteq \Omega$.
\end{proof}

The following result ensures that the limit of a sequence of class~A minimizers of~$\E_\varepsilon$ is a class~A minimizer of~$\G$. Note that this does not immediately follow from Proposition~\ref{Gammaconvprop} as a consequence of a standard application of the Fundamental Theorem of~$\Gamma$-convergence, since class~A minimizers are not \emph{global} minimizers on~$\X$.

\begin{lemma} \label{FTGClem}
Let~$\{ \varepsilon_j \}$ be an infinitesimal sequence of positive numbers. For any~$j \in \N$, let~$u_j \in \X$ be a class~A minimizer of~$\E_{\varepsilon_j}$ and suppose that the sequence~$\{ u_j \}$ converges in~$\X$ to~$u = \chi_E - \chi_{\R^n \setminus E}$, for some measurable set~$E \subseteq \R^n$, such that~$\G(u; \Omega) < +\infty$, for any bounded open~$\Omega \subset \R^n$. Then,~$u$ is a class~A minimizer of~$\G$ or, equivalently,~$\partial E$ is a class~A minimal surface for~$\Per_K$.
\end{lemma}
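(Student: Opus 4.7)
The plan is to combine the $\Gamma$-liminf inequality of Proposition~\ref{Gammaconvprop}(i) with the class~A minimality of each $u_j$, tested against a comparison sequence tailored to the competitor. Fix a bounded open set $\Omega \subset \R^n$ and $v \in \X$ with $v = u$ outside $\Omega$; we may assume $\G(v; \Omega) < +\infty$, for otherwise the minimality inequality at $\Omega$ is trivial. Then $v|_\Omega = \chi_F - \chi_{\R^n \setminus F}$ for some measurable $F$, and since $v = u = \chi_E - \chi_{\R^n \setminus E}$ outside $\Omega$ as well, $v$ takes values in $\{\pm 1\}$ a.e.~in $\R^n$.

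First, I would enlarge $\Omega$ to a bounded open set $\Omega''$ with Lipschitz boundary and $\Omega \subset \subset \Omega''$. Since $v$ coincides with $u$ on $\Omega'' \setminus \Omega$ and outside $\Omega''$, a direct decomposition of the kinetic term $\K(\,\cdot\,;\Omega'')$ into pieces supported on $\Omega \times \R^n$ and $(\Omega'' \setminus \Omega) \times \R^n$ shows that $\G(v; \Omega'') - \G(u; \Omega'') = \G(v; \Omega) - \G(u; \Omega)$, with all quantities finite by hypothesis. Hence it is equivalent to prove $\G(u; \Omega'') \le \G(v; \Omega'')$. The advantage of working with a Lipschitz $\Omega''$ is that, by~\eqref{Kbounds} and the hypothesis $s < 1/2$, the integral $\int_{\Omega''} \int_{\R^n \setminus \Omega''} K(x, y) \, dy \, dx$ is finite, as a consequence of the standard estimate $\int_{\Omega''} \dist(x, \partial \Omega'')^{-2s} \, dx < +\infty$.

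Next, I would define
\[
w_j(x) := \begin{cases} v(x) & \mbox{if } x \in \Omega'', \\ u_j(x) & \mbox{if } x \in \R^n \setminus \Omega''. \end{cases}
\]
Since $w_j = u_j$ outside $\Omega''$, class~A minimality of $u_j$ yields $\E_{\varepsilon_j}(u_j; \Omega'') \le \E_{\varepsilon_j}(w_j; \Omega'')$. The potential part of $w_j$ vanishes by~\eqref{Wzeros}, whereas the kinetic part equals $\K(v; \Omega'', \Omega'') + 2 I_j$, with
\[
I_j := \frac{1}{2}\int_{\Omega''} \int_{\R^n \setminus \Omega''} |v(x) - u_j(y)|^2 K(x, y) \, dy \, dx.
\]
Exploiting the elementary inequality $|a - b|^2 \le (1+\delta) |a - c|^2 + (1+\delta^{-1}) |c - b|^2$ with $c := u(y)$, I obtain
\[
I_j \le (1+\delta) \K(v; \Omega'', \R^n \setminus \Omega'') + (1+\delta^{-1}) \int_{\R^n \setminus \Omega''} |u(y) - u_j(y)|^2 \phi(y) \, dy,
\]
where $\phi(y) := \frac{1}{2}\int_{\Omega''} K(x, y) \, dx$ lies in $L^1(\R^n \setminus \Omega'')$ by the Lipschitz choice of $\Omega''$. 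Passing to a subsequence along which $u_j \to u$ a.e.~on $\R^n$ (possible by the assumed $L^1_{\loc}$ convergence) and applying Lebesgue's dominated convergence with majorant $4\phi$, the remainder integral vanishes as $j \to \infty$; sending $\delta \to 0^+$ afterwards yields $\limsup_j I_j \le \K(v; \Omega'', \R^n \setminus \Omega'')$ and hence $\limsup_j \E_{\varepsilon_j}(w_j; \Omega'') \le \G(v; \Omega'')$.

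Finally, Proposition~\ref{Gammaconvprop}(i) applied to $u_j \to u$ in $\X$ gives $\G(u; \Omega'') \le \liminf_j \E_{\varepsilon_j}(u_j; \Omega'')$; chaining this with the previous limsup bound produces $\G(u; \Omega'') \le \G(v; \Omega'')$, from which the cancellation identity of the first step delivers the desired $\G(u; \Omega) \le \G(v; \Omega)$. The main technical hurdle is the control of $I_j$: at $\Omega$ itself the kernel $K$ may have non-integrable singularity at $\partial \Omega$, preventing a direct dominated-convergence argument. The enlargement to a Lipschitz $\Omega''$, together with the cancellation identity for $\G$, is exactly what circumvents this difficulty.
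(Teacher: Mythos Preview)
Your argument is correct and follows essentially the same route as the paper: use the $\Gamma$-liminf inequality for the lower bound, test the class~A minimality of each $u_j$ against a competitor that equals $v$ inside the domain and $u_j$ outside, and control the cross term $\int_{\Omega''}\int_{\R^n\setminus\Omega''}$ by dominated convergence, the majorant being the $L^1$ function $\phi(y)=\int_{\Omega''}K(x,y)\,dx$ (integrable because $s<1/2$).

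A few minor points of comparison. The paper tests directly on balls $B_R$; since minimality on balls implies minimality on all bounded open sets via the very cancellation identity you wrote down, your enlargement to a Lipschitz $\Omega''$ is not needed, though it does no harm. The paper also phrases things through the recovery sequence $v_{\varepsilon_j}$ of Proposition~\ref{Gammaconvprop}(ii), but since that sequence is just the constant $v$ here, your choice to plug in $v$ directly is the same in substance. Finally, the paper handles the cross term with the cleaner bound $\big|\,|a-b|^2-|a-c|^2\,\big|\le 4|b-c|$ (valid because all functions take values in $[-1,1]$), which avoids the auxiliary parameter $\delta$ and the extra limit; your Young-type inequality works but is a little heavier than necessary.
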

%[u$ COME CARATTERISTICA NON SERVE ASSUMERLO A PRIORI, USARE STIME DI DENSITA'!!!]
\begin{proof}
To prove the result it is enough to check that
$$
\G(u; B_R) \le \G(v; B_R) \quad \mbox{for any } v \in \X \mbox{ such that } v = u \mbox{ in } \R^n \setminus B_R \mbox{ and any } R > 0.
$$
Thus, fix~$R > 0$ and consider any such~$v$. Let~$\{ v_\varepsilon \}$ be the sequence converging to~$v$ in~$\X$ given by Proposition~\ref{Gammaconvprop}$(ii)$. As we also know that~$\{ u_j \}$ converges to~$u$ in~$\X$, Proposition~\ref{Gammaconvprop} yields
\begin{equation} \label{FTGCtech1}
\G(u; B_R) \le \liminf_{j \rightarrow +\infty} \E_{\varepsilon_j}(u_j; B_R) \quad \mbox{and} \quad \limsup_{j \rightarrow +\infty} \E_{\varepsilon_j}(v_{\varepsilon_j}; B_R) \le \G(v; B_R).
\end{equation}

For any~$j \in \N$, we set
$$
\bar{v}_j := \begin{cases}
v_{\varepsilon_j} & \quad \mbox{in } B_R \\
u_j & \quad \mbox{in } \R^n \setminus B_R.
\end{cases}
$$
Observe that~$\bar{v}_j = u_j$ outside of~$B_R$, so that, by the minimality of~$u_j$,
\begin{equation} \label{FTGCtech2}
\E_{\varepsilon_j}(u_j; B_R) \le \E_{\varepsilon_j}(\bar{v}_j; B_R).
\end{equation}

Define, for~$y \in \R^n \setminus B_R$,
\begin{equation} \label{Phidef}
\Phi(y) := \int_{B_R} \frac{dx}{|x - y|^{n + 2 s}}.
\end{equation}
Note that
\begin{equation} \label{PhiL1}
\Phi \in L^1(\R^n \setminus B_R).
\end{equation}
Indeed,
\begin{align*}
\int_{\R^n \setminus B_R} \left| \Phi(y) \right| dy & = \int_{B_R} \left[ \int_{\R^n \setminus B_R} \frac{dy}{|x - y|^{n + 2 s}} \right] dx \le \int_{B_R} \left[ \int_{\R^n \setminus B_{R - |x|}(x)} \frac{dy}{|x - y|^{n + 2 s}} \right] dx \\
& = \frac{n |B_1|}{2 s} \int_{B_R} \frac{dx}{\left( R - |x| \right)^{2 s}} \le \frac{n |B_1| R^{n - 1}}{2 s} \int_0^R \frac{d\rho}{\left( R - \rho \right)^{2 s}} = \frac{n |B_1| R^{n - 2 s}}{2 s (1 - 2 s)} \\
& < +\infty,
\end{align*}
as~$s < 1/2$. For~$x \in B_R$ and~$y \in \R^n \setminus B_R$, we then compute
\begin{align*}
\left| \left| \bar{v}_j(x) - \bar{v}_j(y) \right|^2 - \left| v_{\varepsilon_j}(x) - v_{\varepsilon_j}(y) \right|^2 \right| & = \left| \left| v_{\varepsilon_j}(x) - u_j(y) \right|^2 - \left| v_{\varepsilon_j}(x) - v_{\varepsilon_j}(y) \right|^2 \right| \\
& = \left| 2 v_{\varepsilon_j}(x) - u_j(y) - v_{\varepsilon_j}(y) \right| \left| v_{\varepsilon_j}(y) - u_j(y) \right| \\
& \le 4 \left| v_{\varepsilon_j}(y) - u_j(y) \right|.
\end{align*}
Hence, recalling the definition of~$\bar{v}_j$ and~\eqref{Kbounds} we get
\begin{equation} \label{FTGCtech3}
\begin{aligned}
& \lim_{j \rightarrow +\infty} \left| \E_{\varepsilon_j}(\bar{v}_j; B_R) - \E_{\varepsilon_j}(v_{\varepsilon_j}; B_R) \right| \\
& \hspace{30pt} = 2 \lim_{j \rightarrow +\infty} \left| \K(\bar{v}_j; B_R, \R^n \setminus B_R) - \K(v_{\varepsilon_j}; B_R, \R^n \setminus B_R) \right| \\
& \hspace{30pt} \le \lim_{j \rightarrow +\infty} \int_{B_R} \left[ \int_{\R^n \setminus B_R} \left| \left| \bar{v}_j(x) - \bar{v}_j(y) \right|^2 - \left| v_{\varepsilon_j}(x) - v_{\varepsilon_j}(y) \right|^2 \right| K(x, y) \, dy \right] dx \\
& \hspace{30pt} \le 4 \Lambda \lim_{j \rightarrow +\infty} \int_{\R^n \setminus B_R} \left| v_{\varepsilon_j}(y) - u_j(y) \right| \Phi(y) \, dy \\
& \hspace{30pt} = 0,
\end{aligned}
\end{equation}
where the last limit vanishes in view of Lebesgue's dominated convergence theorem, as both~$v_{\varepsilon_j}$ and~$u_j$ converge to~$u$ a.e.~in~$\R^n \setminus B_R$ and~\eqref{PhiL1} holds true.

By putting together~\eqref{FTGCtech1},~\eqref{FTGCtech2} and~\eqref{FTGCtech3}, we obtain
\begin{align*}
\G(u; B_R) & \le \liminf_{j \rightarrow +\infty} \E_{\varepsilon_j}(u_j; B_R) \le \liminf_{j \rightarrow +\infty} \E_{\varepsilon_j}(\bar{v}_j; B_R) \\
& \le \limsup_{j \rightarrow +\infty} \E_{\varepsilon_j}(v_{\varepsilon_j}; B_R) + \lim_{j \rightarrow +\infty} \left[ \E_{\varepsilon_j}(\bar{v}_j; B_R) - \E_{\varepsilon_j}(v_{\varepsilon_j}; B_R) \right] \\
& \le \G(v; B_R),
\end{align*}
and the thesis follows.
\end{proof}

Analogously, the limit of class~A minimal surfaces for the~$K$-perimeter is itself a class~A minimal surface.

\begin{lemma} \label{Perstablem}
For any~$k \in \N$, let~$\partial E_k$ be a class~A minimal surface for~$\Per_K$ and suppose that the sequence~$\{ E_k \}$ converges in~$\X$ to a set~$E \subset \R^n$, such that~$\Per_K(E, \Omega) < +\infty$, for any bounded open~$\Omega \subset \R^n$. Then,~$\partial E$ is a class~A minimal surface for~$\Per_K$.
\end{lemma}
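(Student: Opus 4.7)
The plan is to mirror the strategy of Lemma~\ref{FTGClem}, replacing the functional $\E_\varepsilon$ with $\Per_K$ itself. First, I would fix an arbitrary bounded open set $\Omega \subset \R^n$ and an arbitrary competitor $F \subset \R^n$ with $F \setminus \Omega = E \setminus \Omega$. The goal is to establish $\Per_K(E; \Omega) \le \Per_K(F; \Omega)$. To exploit the class~A minimality of each $E_k$, I would introduce the glued set
$$
\bar{F}_k := (F \cap \Omega) \cup (E_k \setminus \Omega),
$$
which satisfies $\bar{F}_k \setminus \Omega = E_k \setminus \Omega$. The minimality of $\partial E_k$ in $\Omega$ then yields the one-sided inequality
$$
\Per_K(E_k; \Omega) \le \Per_K(\bar{F}_k; \Omega).
$$

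The next step is to take $\liminf_k$ on the left-hand side and $\lim_k$ on the right. Since $E_k \to E$ in $L^1_\loc(\R^n)$, up to subsequence we may assume pointwise a.e.\ convergence of $\chi_{E_k}$ to $\chi_E$. Writing $\Per_K(E_k; \Omega) = \tfrac14 \K_K(\chi_{E_k} - \chi_{\R^n \setminus E_k}; \Omega)$ as in~\eqref{PerKchi} and applying Fatou's lemma to the non-negative integrand then produces the lower-semicontinuity bound
$$
\Per_K(E; \Omega) \le \liminf_{k \to \infty} \Per_K(E_k; \Omega).
$$

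For the right-hand side, I would decompose $\Per_K(\bar{F}_k; \Omega)$ according to~\eqref{PerKdef}. The "interior" contribution $\LL_K(\bar{F}_k \cap \Omega, \Omega \setminus \bar{F}_k) = \LL_K(F \cap \Omega, \Omega \setminus F)$ is independent of $k$, whereas the remaining two "tail" terms read
$$
\LL_K(F \cap \Omega, \R^n \setminus (E_k \cup \Omega)) \quad \text{and} \quad \LL_K(E_k \setminus \Omega, \Omega \setminus F).
$$
Here the restriction $s \in (0, 1/2)$ is crucial: exactly as in the estimate of the function $\Phi$ in~\eqref{Phidef}, the upper bound in~\eqref{Kbounds} implies $\int_\Omega \int_{\R^n \setminus \Omega} K(x, y)\, dy\, dx < +\infty$. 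This furnishes an integrable majorant that is independent of $k$, so Lebesgue's dominated convergence theorem applies and shows that the above tails converge to their counterparts with $E_k$ replaced by $E$. Using $E \setminus \Omega = F \setminus \Omega$, those limits coincide with the corresponding tails in $\Per_K(F; \Omega)$. Hence $\lim_k \Per_K(\bar{F}_k; \Omega) = \Per_K(F; \Omega)$, and combining with the lower-semicontinuity step concludes the proof.

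The main (and essentially only) technical point is the domination step for the nonlocal tail interactions, which is what distinguishes the nonlocal setting from a classical perimeter argument; fortunately this is precisely the same mechanism already exploited in Lemma~\ref{FTGClem}, and it is handled uniformly by the assumption $s < 1/2$ together with the kernel bound in~\eqref{Kbounds}.
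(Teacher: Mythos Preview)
Your proposal is correct and follows essentially the same route as the paper's proof: glue $F$ inside to $E_k$ outside, invoke minimality of $E_k$, use Fatou's lemma for lower semicontinuity of $\Per_K(E_k;\cdot)$, and dominated convergence via the $\Phi$-type tail bound to pass to the limit in $\Per_K(\bar F_k;\cdot)$. The only caveat is that the majorant $\int_\Omega\int_{\R^n\setminus\Omega}K(x,y)\,dy\,dx<\infty$ is not guaranteed for an \emph{arbitrary} bounded open set~$\Omega$; the paper avoids this by working on balls $B_R$ (where the computation~\eqref{Phidef}--\eqref{PhiL1} applies), which already suffices for class~A minimality.
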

\begin{proof}
The proof of this result is quite similar to that of Lemma~\ref{FTGClem}.

By possibly passing to a subsequence, we may suppose that~$\chi_{E_k} \rightarrow \chi_E$ a.e.~in~$\R^n$.

Given any~$R > 0$, we need to prove that
\begin{equation} \label{Perstabthesis}
\Per_K(E; B_R) \le \Per_K(F; B_R) \quad \mbox{for any set } F \mbox{ such that } F \setminus B_R = E \setminus B_R.
\end{equation}
Given such a~$F$, we set~$\bar{F}_k := \left( F \cap B_R \right) \cup \left( E_k \setminus B_R \right)$. Since~$\bar{F}_k \setminus B_R = E_k \setminus B_R$, we know that
\begin{equation} \label{Perstabtech1}
\Per_K(E_k; B_R) \le \Per_K(\bar{F}_k; B_R).
\end{equation}
Also, thanks to representation~\eqref{PerKchi} and Fatou's lemma, it easily follows that
\begin{equation} \label{Perstabtech2}
\Per_K(E; B_R) \le \liminf_{k \rightarrow +\infty} \Per_K(E_k; B_R).
\end{equation}

Observe now that
\begin{align*}
\Per_K(\bar{F}_k; B_R) - \Per_k(F; B_R) & = \LL_K(F \cap B_R, \R^n \setminus (E_k \cup B_R)) - \LL_K(F \cap B_R, \R^n \setminus (E \cup B_R)) \\
& \quad + \LL_K(E_k \setminus B_R, B_R \setminus F) - \LL_K(E \setminus B_R, B_R \setminus F).
\end{align*}
As
$$
\left| \LL_K(F \cap B_R, \R^n \setminus (E_k \cup B_R)) - \LL_K(F \cap B_R, \R^n \setminus (E \cup B_R)) \right| \le \LL_K((E_k \Delta E) \setminus B_R, F \cap B_R),
%\int_{(E_k \Delta E) \setminus B_R} \int_{F \cap B_R} \frac{dx dy}{|x - y|^{n + 2 s}}.
$$
and
$$
\left| \LL_K(E_k \setminus B_R, B_R \setminus F) - \LL_K(E \setminus B_R, B_R \setminus F) \right| \le \LL_K((E_k \Delta E) \setminus B_R, B_R \setminus F),
%\int_{(E_k \Delta E) \setminus B_R} \int_{B_R \setminus F} \frac{dx dy}{|x - y|^{n + 2 s}},
$$
we may compute
$$
\left| \Per_K(\bar{F}_k; B_R) - \Per_k(F; B_R) \right| \le \LL_K((E_k \Delta E) \setminus B_R, B_R) = \int_{\R^n \setminus B_R} \chi_{E_k \Delta E}(y) \Phi(y) \, dy,
$$
where~$\Phi \in L^1(\R^n \setminus B_R)$ is the function defined in~\eqref{Phidef}. Since~$\chi_{E_k \Delta E} \rightarrow 0$ a.e.~in~$\R^n$, using Lebesgue's dominated convergence theorem we deduce that
$$
\lim_{k \rightarrow +\infty} \Per_K(\bar{F}_k; B_R) = \Per_k(F; B_R).
$$
Consequently, by this,~\eqref{Perstabtech1} and~\eqref{Perstabtech2}, we conclude that
\begin{align*}
\Per_K(E; B_R) \le \liminf_{k \rightarrow +\infty} \Per_K(E_k; B_R) \le \lim_{k \rightarrow +\infty} \Per_K(\bar{F}_k; B_R) \le \Per_K(F; B_R),
\end{align*}
that is~\eqref{Perstabthesis}.
\end{proof}

With all these preliminary results at hand, we may now prove the main proposition of the section. Observe that, as a consequence of it, we deduce the validity Theorem~\ref{PerPLthm}, at least for the case of~$\omega \in \tau \Q^n \setminus \{ 0 \}$.

\begin{proposition} \label{omegaratprop}
Let~$\theta \in (0, 1)$ and~$\omega \in \R^n \setminus \{ 0 \}$ be fixed. For~$\varepsilon > 0$, let~$u_\varepsilon$ be the class~A minimizer of~$\E_{\varepsilon}$ associated to~$\theta$ and~$\omega$, constructed in Theorem~\ref{epsPLthm}. Then, there exists an infinitesimal sequence~$\{ \varepsilon_j \}$ of positive numbers such that
\begin{enumerate}[$(i)$]
\item $u_{\varepsilon_j}$ converges in~$\X$ to a function~$u = \chi_E - \chi_{\R^n \setminus E}$, for some measurable set~$E \subset \R^n$;
\item for any~$\eta \in (0, 1)$, the set~$\{ |u_{\varepsilon_j}| \le \eta \}$ converges locally uniformly to~$\partial E$;
\item $\partial E$ is a class~A minimal surface for~$\Per_K$ and, for any bounded set~$\Omega \subset \R^n$,
$$
\Per_K(E; \Omega) \le C_\Omega,
$$
where~$C_\Omega > 0$ is a constant depending only on~$\Omega$ and universal quantities;
\item there exists a universal constant~$c \in (0, 1)$ such that, given any point~$x_0 \in \partial E$,
$$
\left| E \cap B_R(x_0) \right| \ge c R^n \quad \mbox{and} \quad \left| B_R(x_0) \setminus E \right| \ge c R^n,
$$
for any~$0 < R < \xi$;
\item the inclusions
%\begin{align*}
$$
\bigg\{ x \in \R^n : \frac{\omega}{|\omega|} \cdot x < 0 \bigg\} \subset E \subset \bigg\{ x \in \R^n : \frac{\omega}{|\omega|} \cdot x \le \tau M_0 \bigg\},
$$
%\partial E \subset \bigg\{ x \in \R^n : \frac{\omega}{|\omega|} \cdot x \in [0, \tau M_0] \bigg\},
%\end{align*}
hold true, where~$M_0 > 0$ is the constant found in Theorem~\ref{epsPLthm};
\item
%$\partial E$ enjoys the following quasi-periodicity property:
%\begin{enumerate}[$\bullet$]
%\item
if~$\omega \in \tau \Q^n \setminus \{ 0 \}$, then~$\partial E$ is periodic with respect to~$\sim_{\tau, \, \omega}$.
%, while
%\item if~$\omega \in \R^n \setminus \tau \Q^n$, then~$\partial E$ is the locally uniform limit of a sequence of periodic class~A minimal surfaces for~$\Per_K$.
%\end{enumerate}
\end{enumerate}
\end{proposition}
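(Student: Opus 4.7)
The overall plan is to extract a subsequence via the compactness provided by the uniform energy bound of Proposition~\ref{epsenestprop} together with Lemma~\ref{directlem}, and then to transfer each property from the $u_{\varepsilon_j}$'s to the limit $u = \chi_E - \chi_{\R^n \setminus E}$ using the uniform density estimates of Proposition~\ref{epsdensestprop}, the $\Gamma$-convergence of Proposition~\ref{Gammaconvprop}, and the stability Lemma~\ref{FTGClem}.

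First, I would apply Proposition~\ref{epsenestprop} on an exhausting family of balls $B_{R_k}$, with $R_k \to +\infty$, to get an $\varepsilon$-uniform bound $\E_\varepsilon(u_\varepsilon; B_{R_k}) \le C R_k^{n - 2 s}$, and then invoke Lemma~\ref{directlem} together with a diagonal argument to produce the infinitesimal sequence $\{ \varepsilon_j \}$ along which $u_{\varepsilon_j} \to u = \chi_E - \chi_{\R^n \setminus E}$ in $L^1_\loc(\R^n)$, establishing item $(i)$. Item $(iii)$ then follows at once: for any bounded open $\Omega$, the $\Gamma$-liminf inequality of Proposition~\ref{Gammaconvprop}$(i)$, combined with a covering of $\Omega$ by balls of universal radius and the uniform energy estimate, yields $\Per_K(E; \Omega) = \G(u; \Omega) \le C_\Omega$; the class~A minimality of $\partial E$ is then a direct application of Lemma~\ref{FTGClem}, whose hypotheses are exactly this finite-perimeter condition.

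Items $(iv)$ and $(ii)$ are handled jointly via Proposition~\ref{epsdensestprop}. For a point $x_0 \in \partial E$, the $L^1_\loc$ convergence forces, for large $j$, the ball $B_\rho(x_0)$ to contain a significant mass of both $\{u_{\varepsilon_j} > 0\}$ and $\{u_{\varepsilon_j} < 0\}$; by continuity of $u_{\varepsilon_j}$ one then finds $x_j \to x_0$ with $u_{\varepsilon_j}(x_j) = 0$. Applying Proposition~\ref{epsdensestprop} at $x_j$ on balls of radius $R$ with $\bar{R} \varepsilon_j \le R \le \xi/3$, and passing to the limit through Lemma~\ref{diffsimmlimlem}, yields the two-sided lower bound $|E \cap B_R(x_0)|, |B_R(x_0) \setminus E| \ge c R^n$, which is $(iv)$. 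The locally uniform convergence claimed in $(ii)$ is then a standard by-product: a sequence $x_j \in \{ |u_{\varepsilon_j}| \le \eta \}$ staying at a fixed positive distance from $\partial E$ would, by Proposition~\ref{epsdensestprop} applied at $x_j$, force a ball of fixed radius to contain comparable masses of both phases of $u_{\varepsilon_j}$, contradicting $L^1_\loc$ convergence to a single-sign value of $u$ near an interior point of $E$ or of $\R^n \setminus E$; the reverse inclusion (points of $\partial E$ far from $\{ |u_{\varepsilon_j}| \le \eta \}$) is ruled out symmetrically, using once more the density estimates applied at a point $x_0 \in \partial E$.

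Finally, for $(v)$ I would exploit the fact that, by the construction of $u_\varepsilon$ through the constrained admissible class of Section~\ref{PLminsec} inherited by Theorem~\ref{epsPLthm}, one has $u_{\varepsilon_j} \ge \theta$ on $\{ \omega \cdot x \le 0 \}$ and $u_{\varepsilon_j} \le -\theta$ on $\{ \omega \cdot x \ge \tau M_0 |\omega| \}$; passing this to the $L^1_\loc$ limit forces $u \equiv 1$ on the lower half-space and $u \equiv -1$ beyond the upper constraint, which yields the two inclusions defining $E$. For $(vi)$, the $\sim_{\tau, \, \omega}$-periodicity of each $u_{\varepsilon_j}$ provided by Theorem~\ref{epsPLthm} is preserved by $L^1_\loc$ convergence, so $u$, and hence $E$ and $\partial E$, inherit it. The most delicate step is $(ii)$: here the $\varepsilon$-uniformity of the density estimates is essential, since it is what ultimately allows one to replace the diffuse level sets $\{ |u_{\varepsilon_j}| \le \eta \}$ with the geometric boundary $\partial E$ in the limit $\varepsilon_j \downarrow 0$.
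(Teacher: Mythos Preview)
Your proposal is correct and follows essentially the same strategy as the paper: uniform energy bounds via Proposition~\ref{epsenestprop}, compactness via Lemma~\ref{directlem} and a diagonal argument, the perimeter bound and class~A minimality via Proposition~\ref{Gammaconvprop}$(i)$ and Lemma~\ref{FTGClem}, and the density estimates of Proposition~\ref{epsdensestprop} together with Lemma~\ref{diffsimmlimlem} for items~$(ii)$ and~$(iv)$.

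The only noticeable difference is the order in which you treat~$(ii)$ and~$(iv)$. The paper first proves~$(ii)$ by contradiction (a point~$x_j$ with~$|u_{\varepsilon_j}(x_j)| \le \eta$ lying deep inside~$E$ would, by the density estimate, carry a definite mass of~$\{ u_{\varepsilon_j} < -1/2 \}$, contradicting~$u = 1$ there), and then deduces~$(iv)$ by invoking~$(ii)$ to produce points~$x_j \in B_{R/2}(x_0)$ with~$|u_{\varepsilon_j}(x_j)| \le 1/2$. You instead obtain such~$x_j$ directly from the~$L^1_\loc$ convergence and the continuity of~$u_{\varepsilon_j}$, and you are more explicit than the paper about the ``reverse inclusion'' needed for the full locally uniform convergence in~$(ii)$. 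Both routes are equivalent; just be careful, in your direct argument for~$(iv)$, to work with a representative of~$E$ for which~$x_0 \in \partial E$ guarantees that both~$|E \cap B_\rho(x_0)|$ and~$|B_\rho(x_0) \setminus E|$ are positive (otherwise the step ``$L^1_\loc$ convergence forces both phases to be present'' is not automatic).
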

\begin{proof}
Let~$\{ R_k \}$ be an increasing sequence of positive numbers that diverges to~$+\infty$, and~$\{ \varepsilon_j \}$ an infinitesimal sequence of positive numbers. Thanks to Proposition~\ref{epsenestprop}, for any~$k \in \N$ there exists a constant~$C_k > 0$ such that
$$
\E_{\varepsilon_j}(u_{\varepsilon_j}; B_{R_k}) \le C_k \quad \mbox{for any } j \in \N.
$$
Then, after a standard diagonal argument and repeated applications of Lemma~\ref{directlem}, it is easy to see that a subsequence of~$\{ u_{\varepsilon_j} \}$ (that we label in the same way) converges in~$\X$ and pointwise a.e.~in~$\R^n$ to a function~$u = \chi_{E} - \chi_{\R^n \setminus E}$, for some measurable set~$E \subseteq \R^n$. Moreover, given any bounded set~$\Omega \subset \R^n$, we may select a sufficiently large~$k \in \N$ so that~$\Omega \subseteq B_{R_k}$ and hence by Proposition~\ref{Gammaconvprop}$(i)$,
$$
4 \Per_K(E; \Omega) = \G(u; \Omega) \le \G(u; B_{R_k}) \le \liminf_{j \rightarrow +\infty} \E_{\varepsilon_j}(u_{\varepsilon_j}; B_{R_k}) \le C_k < +\infty.
$$
Consequently, Lemma~\ref{FTGClem} ensures that~$\partial E$ is a class~A minimal surface for~$\Per_K$. Also, when~$\omega \in \tau \Q^n \setminus \{ 0 \}$, then~$\partial E$ clearly inherits the periodicity properties shared by each element of the approximating sequence~$\{ u_{\varepsilon_j}\}$. We have therefore showed that~$(i)$,~$(iii)$ and~$(vi)$ hold true.

Concerning~$(v)$, observe that, by Theorem~\ref{epsPLthm}, we may assume without loss of generality that~$u_{\varepsilon_j} \ge \theta$ on~$\{ \omega \cdot x \le 0 \}$. Accordingly,
$$
u(x) = \lim_{j \rightarrow +\infty} u_{\varepsilon_j}(x) \ge \theta > 0 \quad \mbox{for a.a.~} x \mbox{ such that } \omega \cdot x \le 0.
$$
As~$u$ only attains the values~$1$ and~$-1$, we conclude that, up to changing~$u$ on a negligible set, it holds~$u = 1$ on~$\{ \omega \cdot x < 0 \}$. Similarly, one shows that~$u = -1$ on~$\{ \omega \cdot x \ge \tau M_0|\omega| \}$ and~$(v)$ readily follows.

Now we deal with the proof of~$(ii)$. We argue by contradiction and suppose that there exist a compact set~$K \subset \R^n$, a value~$\delta \in (0, \xi)$ and a sequence of points~$\{ x_j \} \subset K$, such that~$|u_{\varepsilon_j}(x_j)| \le \eta$ and~$B_\delta(x_j) \subset E$. Up to a subsequence,~$\{x_j\}$ converges to some point~$x_0 \in K$, with~$B_{\delta/2}(x_0) \subset E$. That is,
\begin{equation} \label{u=1delta/2}
u = 1 \mbox{ on } B_{\delta/2}(x_0).
\end{equation}
By Proposition~\ref{epsdensestprop}, for any large enough~$j$ we have
$$
\left| \{ u_{\varepsilon_j} < -1/2 \} \cap B_{\delta/2}(x_0) \right| \ge \left| \{ u_{\varepsilon_j} < -1/2 \} \cap B_{\delta / 4}(x_j) \right| \ge c \left| B_{\delta/2} \right|,
$$
for some~$c \in (0, 1)$ independent of~$j$. Accordingly,
\begin{align*}
\int_{B_{\delta/2}(x_0)} u_{\varepsilon_j}(x) \, dx & = \int_{\{ u_{\varepsilon_j} < -1/2 \} \cap B_{\delta/2}(x_0)} u_{\varepsilon_j}(x) \, dx + \int_{\{ u_{\varepsilon_j} \ge -1/2 \} \cap B_{\delta/2}(x_0)} u_{\varepsilon_j}(x) \, dx \\
& \le - \frac{1}{2} \left| \{ u_{\varepsilon_j} < -1/2 \} \cap B_{\delta/2}(x_0) \right| + \left| \{ u_{\varepsilon_j} \ge -1/2 \} \cap B_{\delta/2}(x_0) \right| \\
& \le \left( 1 - \frac{c}{2} \right) \left| B_{\delta/2} \right|.
\end{align*}
But then, taking advantage of this,~\eqref{u=1delta/2} and the fact that~$u_{\varepsilon_j} \to u$ in~$L^1(B_{\delta/2}(x_0))$, we get
\begin{align*}
\left| B_{\delta/2} \right| = \int_{B_{\delta / 2}(x_0)} u(x) \, dx & = \lim_{j \rightarrow +\infty} \int_{B_{\delta/2}(x_0)} u_{\varepsilon_j}(x) \, dx \le \left( 1 - \frac{c}{2} \right) |B_{\delta/2}|,
\end{align*}
which is a contradiction. Hence,~$(ii)$ holds true.

Finally, we show the validity of the density estimates stated in~$(iv)$. By the uniform convergence result of item~$(ii)$, we infer the existence of a sequence of points~$\{ x_j \} \subset B_{R/2}(x_0)$ at which~$|u_{\varepsilon_j}(x_j)| \le 1/2$. Proposition~\ref{epsdensestprop} then ensures that
$$
\left| \{ u_{\varepsilon_j} > 0 \} \cap B_R(x_0) \right| \ge \left| \{ u_{\varepsilon_j} > 0 \} \cap B_{R/3}(x_j) \right| \ge c R^n,
$$
for some universal constant~$c \in (0, 1)$. As, by point~$(i)$,~$u_{\varepsilon_j} \rightarrow u$ a.e.~in~$\R^n$, making use of Lemma~\ref{diffsimmlimlem} we obtain
$$
\left| E \cap B_{R}(x_0) \right| = \lim_{j \rightarrow +\infty} \left| \{ u_{\varepsilon_j} > 0 \} \cap {B_R(x_0)} \right| \ge c R^n.
$$
Similarly, one checks the validity of the estimate for the complement of~$E$.
%
%We are now only left to prove point~$(v)$. Note that for~$\omega \in \tau \Q^n \setminus \{ 0 \}$, the periodicity of~$\partial E$ follows immediately from that of the corresponding approximating sequence~$u_{\varepsilon_j}$. Thus, we may restrict ourselves to the case of~$\omega \in \R^n \setminus \tau \Q^n$.
\end{proof}

To complete the proof of Theorem~\ref{PerPLthm} we now only need to deal with directions~$\omega \in \R^n \setminus \tau \Q^n$. This is done in the following proposition, via an approximation argument.

\begin{proposition}
Let~$\omega \in \R^n \setminus \tau \Q^n$. Then, there exists a class~A minimal surface~$\partial E$ for~$\Per_K$, such that
\begin{equation} \label{Eplanelike}
\bigg\{ x \in \R^n : \frac{\omega}{|\omega|} \cdot x < 0 \bigg\} \subset E \subset \bigg\{ x \in \R^n : \frac{\omega}{|\omega|} \cdot x \le \tau M_0 \bigg\},
%\partial E \subset \bigg\{ x \in \R^n : \frac{\omega}{|\omega|} \cdot x \in [0, \tau M_0] \bigg\}.
\end{equation}
Moreover, there exists a sequence~$\{ \omega_k \} \subset \tau \Q^n \setminus \{ 0 \}$ such that~$\omega_k \rightarrow \omega$ and, denoting by~$\partial E_k$ the class~A minimal surface for~$\Per_K$ associated with~$\omega_k$ given by Proposition~\ref{omegaratprop}, it holds~$E_k \rightarrow E$ in~$L^1_\loc(\R^n)$ and~$\partial E_k \rightarrow \partial E$ locally uniformly in~$\R^n$.
\end{proposition}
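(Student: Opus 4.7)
The plan is to construct $E$ as a subsequential limit of the sets $E_k$ provided by Proposition~\ref{omegaratprop} applied to a sequence of rational directions $\omega_k \in \tau \Q^n \setminus \{0\}$ with $\omega_k \to \omega$. A crucial input is that the constant~$M_0$ in Theorem~\ref{epsPLthm} (and hence in Proposition~\ref{omegaratprop}) is universal, in particular independent of~$\omega$, so every~$E_k$ lies in a strip of the same width~$\tau M_0$, and the density estimates in Proposition~\ref{omegaratprop}(iv) hold with a universal constant.

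First, I would extract a subsequential limit. By Proposition~\ref{omegaratprop}(iii), for every bounded open $\Omega \subset \R^n$ the quantity $\Per_K(E_k; \Omega)$ is bounded uniformly in $k$ by a constant depending only on $\Omega$ and universal data. An estimate analogous to the one carried out in Lemma~\ref{directlem} — bounding the standard fractional seminorm $[\chi_{E_k}]_{H^s(\Omega)}^2$ by a multiple of $\Per_K(E_k; \Omega')$ plus a tail for any $\Omega \Subset \Omega' \Subset \R^n$ — combined with the compact embedding of $H^s$ into $L^1_\loc$ and a diagonal argument produces a subsequence (not relabeled) with $E_k \to E$ in $L^1_\loc(\R^n)$ and $\chi_{E_k} \to \chi_E$ almost everywhere. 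Applying Fatou's lemma to the representation~\eqref{PerKchi} yields $\Per_K(E; \Omega) < +\infty$ for every bounded open~$\Omega$, and then Lemma~\ref{Perstablem} ensures that $\partial E$ is a class~A minimal surface for~$\Per_K$.

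Second, I would verify the planelike inclusions~\eqref{Eplanelike}. For the inner inclusion: if $(\omega/|\omega|) \cdot x < 0$, then $(\omega_k/|\omega_k|) \cdot x < 0$ for all sufficiently large~$k$, hence $x \in E_k$ eventually, and almost-everywhere convergence gives $x \in E$ up to a null set. For the outer inclusion: since $E_k \subset \{(\omega_k/|\omega_k|) \cdot x \le \tau M_0\}$ and these half-spaces converge in measure to $\{(\omega/|\omega|) \cdot x \le \tau M_0\}$, the $L^1_\loc$ convergence forces $\left|E \setminus \{(\omega/|\omega|) \cdot x \le \tau M_0\}\right| = 0$. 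Choosing the measure-theoretic representative of $E$ dictated by the density estimates of Proposition~\ref{omegaratprop}(iv) (which pass to the limit via Lemma~\ref{diffsimmlimlem}) promotes both inclusions to hold pointwise.

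The last — and arguably most delicate — step is to promote the $L^1_\loc$ convergence to locally uniform convergence of the boundaries $\partial E_k \to \partial E$. The key tool here is again Proposition~\ref{omegaratprop}(iv): both $E_k$ and its complement, as well as $E$ and its complement, enjoy the uniform density bound $|E_k \cap B_R(x_0)|, |B_R(x_0) \setminus E_k| \ge c R^n$ at every boundary point, with $c$ independent of~$k$. A contradiction argument in the spirit of the uniform convergence proof of Proposition~\ref{omegaratprop}(ii) then shows the following: if $x_0 \in \partial E$ and $\delta > 0$, the $L^1$ convergence on $B_\delta(x_0)$ combined with the lower density bound for $E_k$ and $E_k^c$ forces $\partial E_k \cap B_\delta(x_0) \ne \varnothing$ for all $k$ large; conversely, a sequence $x_k \in \partial E_k$ accumulating at $x_0 \notin \partial E$ would contradict the density bounds applied to $E$. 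The main obstacle will be making this contradiction argument clean in the case where $\omega$ and $\omega_k$ differ slightly, so that one must rule out the possibility of the boundary drifting through the thin margin of the strip; the universal width $\tau M_0$ and the uniform density constant $c$ together rule this out and conclude the proof.
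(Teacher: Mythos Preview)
Your proposal is correct and follows essentially the same route as the paper's own proof: approximate~$\omega$ by rational directions~$\omega_k$, invoke the uniform perimeter bound from Proposition~\ref{omegaratprop}(iii) to extract an~$L^1_\loc$ subsequential limit~$E$ via compactness, apply Fatou and Lemma~\ref{Perstablem} to conclude that~$\partial E$ is a class~A minimal surface, read off the planelike inclusions from the corresponding ones for~$E_k$, and finally upgrade to locally uniform convergence of the boundaries by a contradiction argument based on the uniform density estimates in Proposition~\ref{omegaratprop}(iv). One small simplification the paper makes that you might adopt: since~$\Per_K(E_k;\Omega)=\tfrac14\,\E_\varepsilon(\chi_{E_k}-\chi_{\R^n\setminus E_k};\Omega)$ for \emph{any}~$\varepsilon>0$, you can feed the characteristic functions directly into Lemma~\ref{directlem} rather than redoing its Gagliardo-seminorm estimate; also, your closing worry about the ``thin margin of the strip'' is unfounded, as the density constant~$c$ in Proposition~\ref{omegaratprop}(iv) is already universal and independent of~$\omega_k$.
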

\begin{proof}
As a preliminary observation, notice that, if~$u = \chi_A - \chi_{\R^n \setminus A}$, for some measurable~$A \subseteq \R^n$, then
$$
\Per_K(A; \Omega) = \frac{1}{4} \, \E_\varepsilon(u; \Omega),
$$
for any~$\varepsilon > 0$ and any bounded set~$\Omega \subset \R^n$.

Let now~$\{ \omega_k \} \subset \tau \Q^n \setminus \{ 0 \}$ be any sequence converging to~$\omega$ and denote with~$\partial E_k$ the corresponding class~A minimal surface constructed in Proposition~\ref{omegaratprop}. Let~$\{ R_i \}$ be any monotone sequence of positive numbers, diverging to~$+\infty$ and~$\{ \varepsilon_k \}$ be any infinitesimal sequence. Then, by Proposition~\ref{omegaratprop}$(iii)$,
$$
\E_{\varepsilon_k}(\chi_{E_k} - \chi_{\R^n \setminus E_k}, B_{R_i}) = 4 \Per_K(E_k; B_{R_i}) \le C_i,
$$
for some constant~$C_i > 0$ that only depends on~$i$ and universal quantities. Consequently, with the aid of Lemma~\ref{directlem} and a diagonal argument analogous to that presented in the proof of Proposition~\ref{omegaratprop}, we obtain that, up to a subsequence,~$E_k$ converges in~$L^1_\loc(\R^n)$ and pointwise a.e.~in~$\R^n$ to some measurable~$E \subseteq \R^n$. The inclusions in~\eqref{Eplanelike} then readily follow from the analogous ones obtained in Proposition~\ref{omegaratprop}$(iv)$ for each~$E_k$. Moreover, using Fatou's lemma it is immediate to check that the~$K$-perimeter of~$E$ in any compact set is finite. Hence, by Lemma~\ref{Perstablem}, the set~$\partial E$ is a class~A minimal surface for~$\Per_K$.

We are therefore only left to show the locally uniform convergence of~$\partial E_k$ to~$\partial E$. The argument is similar to the one adopted in the proof of Proposition~\ref{omegaratprop}$(ii)$. Suppose by contradiction that there exist a compact set~$K$, a number~$\delta \in (0, \xi)$ and a sequence of points~$\{ x_k \}$ such that~$x_k \in \partial E_k \cap K$ and~$B_\delta(x_k) \cap E = \varnothing$. Up to a subsequence, we see that~$x_k \rightarrow x_0$, for some~$x_0 \in K$, and
\begin{equation} \label{EoutBdelta}
B_{\delta/2}(x_0) \cap E = \varnothing.
\end{equation}
In view of Proposition~\ref{omegaratprop}$(iv)$, there exists a universal constant~$c \in (0,1)$ for which
$$
\left| E_k \cap B_{\delta/2}(x_0) \right| \ge \left| E_k \cap B_{\delta / 4}(x_k) \right| \ge c,
$$
for any~$k \in \N$ sufficiently large. By the~$L^1_\loc$ convergence of the~$E_k$'s, we then have that
$$
\left| E \cap B_{\delta/2}(x_0) \right| = \lim_{k \rightarrow +\infty} \left| E_k \cap B_{\delta/2}(x_0) \right| \ge \left| E_k \cap B_{\delta / 4}(x_k) \right| \ge c,
$$
in contradiction with~\eqref{EoutBdelta}. The proof of the proposition is thus complete.
\end{proof}


\begin{thebibliography}{MBRS16$\,$}

\bibitem[AB98]{AB98}
G. Alberti, G. Bellettini,
\emph{A non-local anisotropic model for phase transitions: asymptotic behaviour of rescaled energies},
European J. Appl. Math., 9.3:261--284, 1998.

\bibitem[ABS94]{ABS94}
G. Alberti, G. Bouchitt{\'e}, P. Seppecher,
\emph{Un r\'esultat de perturbations singuli\`eres avec la norme {$H^{1/2}$}},
C. R. Acad. Sci. Paris S\'er. I Math., 319.4:333--338, 1994.

\bibitem[BV08]{BV08}
I. Birindelli, E. Valdinoci,
\emph{The {G}inzburg-{L}andau equation in the {H}eisenberg group},
Commun. Contemp. Math., 10.5:671--719, 2008.

\bibitem[BV17]{BV17}
C. Bucur, E. Valdinoci,
\emph{Nonlocal diffusion and applications},
Lecture Notes of the Unione Matematica Italiana, 20,
Springer, Unione Matematica Italiana, Bologna, xii+155, 2016. 

\bibitem[CC10]{CC10}
X. Cabr\'e, E. Cinti,
\emph{Energy estimates and 1-{D} symmetry for nonlinear equations involving the half-{L}aplacian},
Discrete Contin. Dyn. Syst., 28.3:1179--1206, 2010.

\bibitem[CC14]{CC14}
X. Cabr{\'e}, E. Cinti,
\emph{Sharp energy estimates for nonlinear fractional diffusion equations},
Calc. Var. Partial Differential Equations, 49.1-2:233--269, 2014.

\bibitem[CS14]{CS14}
X. Cabr\'e, Y. Sire,
\emph{Nonlinear equations for fractional Laplacians, I: Regularity, maximum principles, and Hamiltonian estimates},
Ann. Inst. H. Poincar\'e Anal. Non Lin\'eaire, 31.1:23--53, 2014.

\bibitem[CS15]{CS15}
X. Cabr\'e, Y. Sire,
\emph{Nonlinear equations for fractional Laplacians II: existence, uniqueness, and qualitative properties of solutions},
Trans. Amer. Math. Soc., 367.2:911--941, 2015.

\bibitem[CS-M05]{CS-M05}
X. Cabr\'e, J. Sol\`a-Morales,
\emph{Layer solutions in a half-space for boundary reactions},
Comm. Pure Appl. Math., 58.12:1678--1732, 2005.

\bibitem[CC95]{CC95}
L. Caffarelli, A. C{\'o}rdoba,
\emph{Uniform convergence of a singular perturbation problem},
Comm. Pure Appl. Math., 48.1:1--12, 1995.

\bibitem[CdlL01]{CdlL01}
L. Caffarelli, R. de la Llave,
\emph{Planelike minimizers in periodic media},
Comm. Pure Appl. Math., 54.12:1403--1441, 2001.

\bibitem[CRS10]{CRS10}
L. Caffarelli, J.-M. Roquejoffre, O. Savin,
\emph{Nonlocal minimal surfaces},
Comm. Pure Appl. Math., 63.9:1111--1144, 2010.

\bibitem[CGS84]{CGS84}
J. Carr, M. E. Gurtin, M. Slemrod,
\emph{Structured phase transitions on a finite interval},
Arch. Rational Mech. Anal., 86.4:317--351, 1984.

\bibitem[CSV16]{CSV16}
E. Cinti, J. Serra, E. Valdinoci,
\emph{Quantitative flatness results and $BV$-estimates for stable nonlocal minimal surfaces},
arXiv preprint, arXiv:1602.00540, 2016. In print on
J. Differential Geom.

\bibitem[C17]{C17}
M. Cozzi,
\emph{Regularity results and Harnack inequalities for minimizers and solutions of nonlocal problems: a unified approach via fractional De~Giorgi classes},
J. Funct. Anal., 272.11:4762--4837, 2017.

\bibitem[CDV17a]{CDV17a}
M. Cozzi, S. Dipierro, E. Valdinoci,
\emph{Nonlocal phase transitions in homogeneous and periodic media},
J. Fixed Point Theory Appl. 19.1:387--405, 2017.

\bibitem[CDV17b]{CDV17b}
M. Cozzi, S. Dipierro, E. Valdinoci,
\emph{Planelike interfaces in long-range Ising models and connections with
nonlocal minimal surfaces}, J. Stat. Phys. 167.6:1401--1451, 2017.

\bibitem[CP16]{CP16}
M. Cozzi, T. Passalacqua,
\emph{One-dimensional solutions of non-local {A}llen-{C}ahn-type equations with rough kernels},
J. Differential Equations, 260.8:6638--6696, 2016.

\bibitem[CV17]{CV17}
M. Cozzi, E. Valdinoci,
\emph{Plane-like minimizers for a non-local {G}inzburg-{L}andau-type energy in a periodic medium},
J. \'Ec. polytech. Math. 4:337--388, 2017.

\bibitem[D13]{D13}
G. D{\'a}vila,
\emph{Plane-like minimizers for an area-{D}irichlet integral},
Arch. Ration. Mech. Anal., 207.3:753--774, 2013.

\bibitem[DPV12]{DPV12}
E. Di Nezza, G. Palatucci, E. Valdinoci,
\emph{Hitchhiker's guide to the fractional {S}obolev spaces},
Bull. Sci. Math., 136:521--573, 2012.

\bibitem[DMV17]{DMV17}
S. Dipierro, M. Medina, E. Valdinoci,
\emph{Fractional elliptic problems with critical growth in the
whole of $\Bbb{R}^n$}, Appunti,
Scuola Normale Superiore di Pisa (Nuova Serie)
[Lecture Notes. Scuola Normale Superiore di Pisa (New Series)], 15,
Edizioni della Normale, Pisa, viii+152, 2017.

\bibitem[G17]{G17}
N. Garofalo, \emph{Fractional thoughts},
arXiv preprint,
arXiv:1712.03347, 2017.

\bibitem[G09]{G09}
M. d. M. Gonz{\'a}lez,
\emph{Gamma convergence of an energy functional related to the fractional {L}aplacian},
Calc. Var. Partial Differential Equations, 36.2:173--210, 2009.

\bibitem[G87]{G87}
M. E. Gurtin,
\emph{Some results and conjectures in the gradient theory of phase transitions},
Metastability and incompletely posed problems ({M}inneapolis, {M}inn., 1985), IMA Vol. Math. Appl., 3:135--146, 1987.

\bibitem[H32]{H32}
G. A. Hedlund,
\emph{Geodesics on a two-dimensional {R}iemannian manifold with periodic coefficients},
Ann. of Math. (2), 33.4:719--739, 1932.

\bibitem[L14]{L14}
M. Ludwig,
\emph{Anisotropic fractional perimeters},
J. Differential Geom., 96.1:77--93, 2014.

\bibitem[MSW16]{MSW16}
V. Millot, Y. Sire, K. Wang
\emph{Asymptotics for the fractional Allen-Cahn equation
and stationary nonlocal minimal surfaces},
arXiv preprint, arXiv:1610.07194, 2016.

\bibitem[M87]{M87}
L. Modica,
\emph{The gradient theory of phase transitions and the minimal interface criterion},
Arch. Rational Mech. Anal., 98.2:123--142, 1987.

\bibitem[MBRS16]{MBRS16}
G. Molica Bisci, V. D. Radulescu, R. Servadei,
\emph{Variational methods for nonlocal fractional problems},
Encyclopedia of Mathematics and its Applications, 162,
Cambridge University Press, Cambridge, xvi+383, 2016.

\bibitem[M24]{M24}
H. M. Morse,
\emph{A fundamental class of geodesics on any closed surface of genus greater than one},
Trans. Amer. Math. Soc., 26.1:25--60, 1924.

\bibitem[NV07]{NV07}
M. Novaga, E. Valdinoci,
\emph{The geometry of mesoscopic phase transition interfaces},
Discrete Contin. Dyn. Syst., 19.4:777--798, 2007.

\bibitem[PSV13]{PSV13} G. Palatucci, O. Savin, E. Valdinoci,
\emph{Local and global minimizers for a variational energy involving a fractional norm},
Ann. Mat. Pura Appl. (4), 192.4:673--718, 2013.

\bibitem[PV05]{PV05}
A. Petrosyan, E. Valdinoci,
\emph{Geometric properties of {B}ernoulli-type minimizers},
Interfaces Free Bound., 7.1:55--77, 2005.

\bibitem[PV05b]{PV05b}
A. Petrosyan, E. Valdinoci,
\emph{Density estimates for a degenerate/singular phase-transition model},
SIAM J. Math. Anal., 36.4:1057--1079(electronic), 2005.

\bibitem[P12]{P12}
W. F. Pfeffer,
\emph{The divergence theorem and sets of finite perimeter},
Pure and Applied Mathematics, CRC Press, Boca Raton, FL, xvi+243, 2012.

\bibitem[SV11]{SV11}
O. Savin, E. Valdinoci,
\emph{Density estimates for a nonlocal variational model via the {S}obolev inequality},
SIAM J. Math. Anal., 43.6:2675--2687, 2011.

\bibitem[SV12]{SV12}
O. Savin, E. Valdinoci,
\emph{{$\Gamma$}-convergence for nonlocal phase transitions},
Ann. Inst. H. Poincar\'e Anal. Non Lin\'eaire, 29.4:479--500, 2012.

\bibitem[SV14]{SV14}
O. Savin, E. Valdinoci,
\emph{Density estimates for a variational model driven by the {G}agliardo norm},
J. Math. Pures Appl. (9), 101.1:1--26, 2014.

\bibitem[SiV09]{SiV09}
Y. Sire, E. Valdinoci,
\emph{Fractional Laplacian phase transitions and boundary reactions: a geometric inequality and a symmetry result},
J. Funct. Anal., 256.6:1842--1864, 2009.

\bibitem[SiV12]{SiV12}
Y. Sire, E. Valdinoci,
\emph{Density estimates for phase transitions with a trace},
Interfaces Free Bound., 14.2:153--165, 2012.

\bibitem[V04]{V04}
E. Valdinoci,
\emph{Plane-like minimizers in periodic media: jet flows and {G}inzburg-{L}andau-type functionals},
J. Reine Angew. Math., 574:147--185 , 2004.

\end{thebibliography}
\end{document}